\newcommand{\blind}{0}
\newcommand{\arxiv}{1}
\newcommand\spacevar{1.45}
\newcommand\spacevar{1}
\newcolumntype{Y}{>{\RaggedRight\arraybackslash}X}
\newcommand{\R}{\mathbb{R}}
\newcommand{\N}{\mathbb{N}}
\DeclareMathOperator*{\argmax}{arg\,max}
\DeclarePairedDelimiterX{\norm}[1]{\lVert}{\rVert}{#1}
\DeclarePairedDelimiterX{\abs}[1]{\lvert}{\rvert}{#1}
\renewcommand{\epsilon}{\varepsilon}
\renewcommand{\P}{\mathbb{P}}
\newcommand{\E}{\mathbb{E}}
\newcommand{\iid}{\overset{\text{\tiny iid}}{\sim}}
\newcommand{\independent}{\perp\!\!\!\perp}
\newcommand{\var}{\operatorname{Var}}
\newcommand{\vI}{\mathbf{Id}}
\newcommand{\vNull}{\mathbf{0}}
\newcommand{\landauO}{\mathcal{O}}
\newcommand{\landauo}{o}
\newcommand{\landauOp}{\mathcal{O}_{\P}}
\newcommand{\landauop}{o_{\P}}
\newcommand{\landauOLp}{\mathcal{O}_{\operatorname{L}^p}}
\newcommand{\vD}{\mathbf{D}}
\newcommand{\vP}{\mathbf{P}}
\newcommand{\vV}{\mathbf{V}}
\newcommand{\vX}{\mathbf{X}}
\newcommand{\vY}{\mathbf{Y}}
\newcommand{\vZ}{\mathbf{Z}}
\newcommand{\vx}{\mathbf{x}}
\newcommand{\vz}{\mathbf{z}}
\newcommand{\vepsilon}{\boldsymbol{\epsilon}}
\newcommand{\CPt}{\operatorname{CP}}
\newcommand{\hbeta}{\hat{\beta}}
\newcommand{\hgamma}{\hat{\gamma}}
\newcommand{\resid}{\mathbf{R}}
\newcommand{\sresid}{\widetilde{\mathbf{R}}}
\newcommand{\noise}{\boldsymbol{\epsilon}}
\newcommand{\snoise}{\tilde{\boldsymbol{\epsilon}}}
\newcommand{\HOS}{H_{0,S}}
\newcommand{\HAS}{H_{A,S}}
\newcommand{\HO}{H_{0}}
\newcommand{\HA}{H_{A}}
\newcommand{\tildeHOS}{\widetilde{H}_{0,S,p}}
\newcommand{\pa}{\operatorname{PA}}
\numberwithin{equation}{section}
\theoremstyle{break}
\newtheorem{theorem}{Theorem}[section]
\newtheorem{example}[theorem]{Example}
\newtheorem{corollary}[theorem]{Corollary}
\newtheorem{definition}[theorem]{Definition}
\newtheorem{lemma}[theorem]{Lemma}
\newtheorem{proposition}[theorem]{Proposition}
\newtheorem{assumption}{Assumption}
\theoremstyle{plain}
\newtheorem{remark}[theorem]{Remark}
\theoremstyle{nonumberplain}
\newtheorem{proof}{Proof}
\begin{document}

\def\spacingset#1{\renewcommand{\baselinestretch}%
{#1}\small\normalsize} \spacingset{1}

\if0\arxiv
\if0\blind
{
  \title{\bf Invariant Causal Prediction for Sequential Data}
  \author{Niklas Pfister\thanks{Partially funded by Swiss National
      Science Foundation (SNSF) No. 200021\_153504}\hspace{.2cm}, Peter Bühlmann\\
    Seminar for Statistics, ETH Zürich\\
    and \\
    Jonas Peters \\
    Department of Mathematical Sciences, University of Copenhagen}
  \maketitle
} \fi

\if1\blind
{
  \bigskip
  \bigskip
  \bigskip
  \begin{center}
    {\LARGE\bf Invariant Causal Prediction for Sequential Data}
  \end{center}
  \medskip
} \fi

\bigskip
\fi

\if1\arxiv
\author{Niklas Pfister, Peter B\"uhlmann and Jonas Peters}
\title{Invariant Causal Prediction for Sequential Data}
\maketitle
\fi

\begin{abstract}
  We investigate the problem of inferring the causal predictors of a
  response $Y$ from a set of $d$ explanatory variables
  $(X^1,\dots,X^d)$.  Classical ordinary least squares regression
  includes all predictors that reduce the variance of $Y$.  Using only
  the causal predictors instead leads to models that have the
  advantage of remaining invariant under interventions; loosely
  speaking they lead to invariance across different ``environments''
  or ``heterogeneity patterns''. More precisely, the conditional
  distribution of $Y$ given its causal predictors remains invariant
  for all observations. Recent work exploits such a stability to infer
  causal relations from data with different but known environments. We
  show that even without having knowledge of the environments or
  heterogeneity pattern, inferring causal relations is possible for
  time-ordered (or any other type of sequentially ordered) data. In
  particular, this allows detecting instantaneous causal relations in
  multivariate linear time series which is usually not the case for
  Granger causality. Besides novel methodology, we provide statistical
  confidence bounds and asymptotic detection results for inferring
  causal predictors, and present an application to monetary policy
  in macroeconomics.
\end{abstract}

\if0\arxiv
\noindent%
{\it Keywords:} causal structure learning, change point model, Chow statistic,
instantaneous causal effects, monetary policy \vfill
\fi

\if1\arxiv
\noindent%
{\it Keywords:} causal structure learning, change point model, Chow statistic,
instantaneous causal effects, monetary policy
\fi

\spacingset{\spacevar}
\section{Introduction}

Detecting causal relations is a core problem in many scientific
fields. Performing controlled randomized intervention experiments can
be considered the gold standard for inferring causal relations
\citep[e.g.,][]{Peirce1883, Pearl2009,
  Imbens2015, Peters2017book}. %
In many situations however, randomization and interventions are
unethical, physically impossible or too costly. In addition, many
datasets nowadays come from non-designed experiments: the question is
then whether one can still infer causal relations. Assuming additional
structure, this is indeed possible.

The field of causal structure learning attempts to infer causal
relations from data. Many procedures in this field only use
observational data
\citep[e.g.,][]{
Spirtes2000, 
Chickering2002, 
Shimizu2006, 
Janzing2012,
Peters2013anm,
vandegeer2013, 
Buehlmann2014cam}.
These methods are either based on strong assumptions (that are in
particular violated for heterogeneous data) or they do not output a
single causal graph estimate but a set of so-called Markov equivalent
graphs. The latter occurs because of severe identifiability problems
in general. Having access to interventional data improves
idenitifiability for inferring causal relations and
several methods have been proposed that exploit both observational and
interventional data \citep[e.g.,][]{Eaton2007, hauser2012,
  peters2016}. Causal discovery, however, is an ambitious task, and
most of the above methods only provide point estimates and lack
statistical confidence guarantees.

The problem of identifying causal directions is greatly reduced in the
time series setting, where the concept of Granger causality
\citep{Granger1969} plays a prominent role; see also structural vector
auto-regressive models (SVAR) that are popular in econometrics
\citep[e.g.,][]{lutkepohl2005}. When excluding instantaneous effects,
the time order allows applying regression techniques to infer causal
relations between variables. In this paper we consider the more
general problem of inferring also the time-instantaneous effects.

We consider a target variable $Y$ and covariates
$X^1, \ldots, X^d$.  Instead of reconstructing
the %
full causal structure, we thus try to infer the set
$S^* \subseteq \{1,\ldots ,d\}$ of causal predictors\footnote{In the
  context of causal graphical models, it is more common to use the
  term ``causal parents'' instead of ``causal predictors''. Here, we
  use the latter in order to emphasize the regression setting.} of
$Y$ (where the indices in $\{1,\ldots ,d\}$ refer to the indices of
the variables $X^1,\ldots ,X^d$). Our approach comes with the
following two advantages; (1) Most importantly, it provides a
statistical confidence guarantee: the method outputs an estimate
$\hat{S} \subseteq \{1, \ldots, d\}$ of the set of causal predictors
such that $\hat{S} \subseteq S^*$ with controllable (large) coverage
probability $1-\alpha$; (2) The method does not need to model
interdependence of the predictor variables $X^1, \ldots, X^d$, but
rather only the dependence of the causal variables from $S^*$ on the
target $Y$.

Our approach uses sequential data that are assumed to arise from a mix
of observational and interventional settings.
Given that type of data, we propose to look for invariant structures, i.e.,
conditionals that do not change over time.  To do so, we do not need
to know the nature or location of the intervention regimes.
A framework that connects stability (or invariance) to causality has
been recently formulated by \citet{peters2016} under the name {\it
  invariant causal prediction} which we will summarize next.
(Although the underlying principles coincide, \citet{peters2016} do
not consider sequential data, but assume knowledge of the location of
the different regimes.)

Invariant causal prediction considers the situation where one observes
the response and covariates $(Y_e, X^1_e, \ldots, X^d_e)$ in different
environments $e \in \mathcal{E}$, that is, in each of these
environments, we have an i.i.d.\ data set.  The crucial assumption is
that the conditional distribution of the response given the variables
from $S^*$ is the same in all environments: more formally, we have for
all $e, f \in \mathcal{E}$ and all $x$ that
\begin{equation}
  \label{eq:conditional_invariance}
  Y_e \,|\, \big(X^{S^*}_e = x\big) 
  \;\; \overset{d}{=} \; \;
  Y_f \,|\, \big(X^{S^*}_f = x\big).
\end{equation}
This assumption is satisfied, for example, if the environments
correspond to different intervention settings, that do not contain
a direct intervention on the target variable $Y$
\citep[][Proposition~1]{peters2016}. Here, we develop invariant causal
prediction in an environment-free way, i.e., without knowing the
different environments.

Summarizing the above comments, our method is applicable in the
following situation. There is a target variable $Y_t$ and a set
${S^*}$ of causal predictors that satisfies the following property:
for all $t$, $Y_t = X_t^{S^*}\beta + \epsilon_t$ for some Gaussian
i.i.d.\ sequence $\epsilon_t$ with $\epsilon_t \independent X_t^{S^*}$
(see Assumption~\ref{assumption:invariance} in
Section~\ref{sec:structural_invariance} for details). Our method aims
to estimate ${S^*}$. Furthermore, we do not need to assume that the
predictors $X_t$ are i.i.d.\ over $t$ as our method utilizes any
changes in distribution.

\subsection{Contribution and relation to other work}\label{subsec.relwork}

\citet{peters2016} assume that the environments are known in order to
exploit the invariance in \eqref{eq:conditional_invariance}.  Without
knowledge of the environments the task becomes more difficult.
Naively estimating the environments from data and subsequently
applying the existing methodology may lead to a loss of the method's
coverage guarantee or yield less powerful results. In particular, this
is the case for recovering the environments by clustering (see Remark
B.2) or by using change point detection methods (see discussion in
Section~\ref{sec:test_stat_cp} and Remark B.3). In contrast, our
procedure does not estimate the environments but instead utilizes the
existing non-invariances directly. It can thus be seen as a highly
non-trivial generalization of invariant causal prediction when the
environments are unknown.

From a technical perspective, we provide a new asymptotic analysis for
the Chow test \citep{chow1960} for simultaneously testing equality of
regression coefficients and homoscedasticity of the residuals. In
particular, we show that it has a non-optimal rate for detecting
differences of regression coefficients. As an alternative with better
rates, we propose using a decoupled version that individually tests
regression coefficients and residual variances, and combines them with
a Bonferroni correction. Finally, we employ a bootstrap procedure due
to \citet{shah2015} which allows for efficient multiple testing over
many smooth or block-wise time segments of the data.

The proposed causal inference methodology can be directly used for
multivariate auto-regressive time series, allowing for detection of
\emph{instantaneous} causal predictors. The notion of ``different
environments'' then translates to non-stationarity; in fact, it is the
non-stationary nature of the system which allows for detection of
instantaneous causal predictors, whereas a stationary process would
not provide the required ``perturbations'' to identify causality. Our
work is therefore different from the celebrated concept of Granger
causality for non-instantaneous causal relations
\citep[e.g.,][]{Granger1969, lutkepohl2005}. Other methods that
are able to identify instantaneous effects \citep[e.g.,][]{Chu2008,
  Hyvarinen2008, Peters2013nips} often require nonlinearities or
non-Gaussianity. Additionally, they need to model the full network and
do not come with any notion of causal significance.
Another line of work starts from high-frequency data sampled from a
Granger model without instantaneous effects and tries to infer the
instantaneous effects appearing in low-frequency sub-sampled data
\citep[e.g.,][]{gong15, tank17}. A further related area of research
extends non-instantaneous effects models by allowing for
time-dependent parameters \citep[e.g.,][]{talih05, siracusa09}. These
methods usually work with stationary data, model the full causal
system rather than one target variable, and do not come with
significance statements on their causal findings.

The paper is structured as follows. Section~\ref{sec:icp} introduces
invariant causal prediction in an environment-free way. Our method
and the confidence guarantees for detecting causal predictors are
described in Section~\ref{sec:scaled_residuals}. We establish
consistency and detection rates of this method in
Section~\ref{sec:consistency}.  Algorithmic details are given in
Section~\ref{sec:implementation}, with programming code available
online as an \texttt{R}-package. In Section~\ref{sec:timeseries}, we
extend the framework to multivariate time series data, and
Section~\ref{sec:experiments} reports on numerical experiments and an
application in macroeconomics for monetary policy.

\section{Invariant causal prediction}\label{sec:icp}

Throughout this work, we assume that we are given data from a sequence
$(Y_t,X_t)_{t\in\{1,\dots,n\}}$, where $X_t\in\R^{1\times d}$ contains
predictor variables and $Y_t\in\R$ is a target variable of
interest. Moreover, let
$\vY\coloneqq(Y_1,\dots,Y_n)^{\top}\in\R^{n\times 1}$ and
$\vX\coloneqq(X_1^{\top},\dots,X_n^{\top})^{\top}\in\R^{n\times d}$
denote the corresponding matrix quantities. We are interested in
settings where the experimental conditions are allowed to change over
time, as long as the structural dependence (predictor set and
regression parameters) of $Y_t$ on $X_t$ remains
fixed, which is the corresponding environment-free version of the
invariance assumption given in
\eqref{eq:conditional_invariance}. Ideally, we would like to make
direct use of this assumption for structural inference. However, in
order to have a reasonable amount of power to test such an assumption
based on a finite sample it is useful to describe the dependence of
$Y$ on its parents by a parametric function class. In this paper we
focus on linear Gaussian models but our ideas potentially also extend
to more complicated models.

\subsection{Structural invariance}\label{sec:structural_invariance}

In this subsection we formalize the fixed structural dependence
(predictor set and regression parameters) of $Y_t$ on $X_t$ to be
linear Gaussian.  We denote by
$(\vY,\vX)=(Y_t,X_t)_{t\in\{1,\dots,n\}}\in\R^{n\times (d+1)}$ the
random vectors corresponding to the entire data and for any set
$S\subseteq\{1,\dots,d\}$, the vector $X^S\in\R^{1\times\abs{S}}$
contains only the variables $\{X^k; k\in S\}$. We make the following
definition.
\begin{definition}[invariant set $S$]
  \label{def:invariantset}
  A set $S\subseteq\{1,\dots,d\}$ is called invariant with respect to
  $(\vY,\vX)$ if there exist parameters $\mu\in\R$,
  $\beta\in(\R\setminus\{0\})^{\abs{S}\times 1}$ and $\sigma\in\R_{>0}$ such that
  \begin{compactitem}
  \item[(a)] $\forall t\in\{1,\dots,n\}:\quad
    Y_t=\mu+X_t^S\beta+\epsilon_t\text{ and }\epsilon_t \independent
    X_t^{S}$, 
  \item[(b)] $\epsilon_1,\dots,\epsilon_n\iid \mathcal{N}(0,\sigma^2)$.
  \end{compactitem}  
\end{definition}
Throughout the paper, the symbol $\independent$ denotes independence
and we neglect the intercept term $\mu$ as it can be added without
loss of generality by including a constant term in $X$. For an
invariant set $S$ we thus have that conditionally, $Y_t\mid X_t^S$
($t=1,\dots,n$) are i.i.d. Gaussian random variables. It is crucial to
observe that Definition~\ref{def:invariantset} makes no restrictions
on the distribution of the process $(X_t)_{t\in\{1,\dots,n\}}$, and
also the distribution of $(\vY,\vX)$ can be quite general. In
particular, this allows for time dependencies and arbitrary changes in
the distribution of $X_t$.  In Remark~\ref{rmk:nonGaussianNoise} we
discuss a potential extension that allows to weaken the Gaussian
linear assumption.

Based on Definition~\ref{def:invariantset}, we can formalize
an invariance assumption similar to \citet[Assumption 1]{peters2016},
by requiring the existence of an invariant set $S^*$.
\begin{assumption}[structural invariance]
  \label{assumption:invariance}
  There exists a set $S^*\subseteq\{1,\dots,d\}$ which is invariant
  with respect to $(\vY,\vX)$.
\end{assumption}
The set $S^*$ can be seen as a set of predictor variables which
shields off $Y$ from any interventions on the system other than
interventions on $Y$ itself. In the setting of heterogeneous data, the
set $S^*$ then corresponds to the set of predictors that can be safely
included into a prediction model which works at all time points
$t\in\{1,\dots,n\}$. 

\subsection{Invariant prediction and coverage}

In this section we recall some definitions and results related
to invariant causal prediction from
\citet{peters2016}. Assumption~\ref{assumption:invariance} enforces the
existence of a set $S^*$ such that the structural dependence
(predictor set and regression parameters) between $Y_t$ and
$X_t^{S^*}$ remains fixed. Our goal is to estimate the set $S^*$ based
on the observed data $(\vY,\vX)$. We build this estimate by taking the
intersection of all sets $S\subseteq\{1,\dots,d\}$ which are invariant
with respect to $(\vY,\vX)$, i.e., we consider all such sets $S$ and
test the following null hypothesis
\begin{equation}
  \label{eq:basic_nullhypothesis}
  \HOS:\text{ S is an invariant set with respect to }(\vY,\vX).
\end{equation}
Based on this null hypothesis we define the %
set of plausible causal predictors
\begin{equation}
  \label{eq:Shat}
  \tilde{S}\coloneqq\bigcap_{\overset{S\subseteq\{1,\dots,d\}:}{\HOS\text{
    is true}}}S\subseteq S^*,
\end{equation}
where we define the intersection over an empty index set as the empty
set. The name plausible causal predictors is motivated by the underlying
causal interpretation explained in Section~\ref{sec:causalinterp}. The
property that $\tilde{S}$ is contained in $S^*$ follows immediately
from Assumption~\ref{assumption:invariance}. In general, this
containment is strict since there could be several sets other than
$S^*$ which satisfy the invariance condition across the considered
interventions. Hence, if we change the interventions in such a way
that the number of invariant sets decreases this leads to an increase
in the size of the set $\tilde{S}$.  Intuitively, an increase in
interventions results in an increase in the set $\tilde{S}$.

Empirically, we can use this to construct an estimator based on an
arbitrary family of hypothesis tests
$\varphi=(\varphi_S)_{S\subseteq\{1,\dots,d\}}$, where $\varphi_S$ is
the decision rule that either rejects $\HOS$ ($\varphi_S=1$) or
accepts $\HOS$ ($\varphi_S=0$). We then estimate
$\tilde{S}$ using the family of tests $\varphi$ by
\begin{equation}
 \label{eq:Shat2}
  \hat{S}(\varphi)\coloneqq\bigcap_{\overset{S\subseteq\{1,\dots,d\}:}{\,\varphi_S \text{ accepts }\HOS}}S.
\end{equation}
It is obvious that this estimator has the following coverage property,
given that the hypothesis tests achieve correct level.
\begin{proposition}[coverage property {\citep[Theorem~1]{peters2016}}]
  \label{thm:empirical_coverage}
  Assume Assumption~\ref{assumption:invariance} and let
  $\varphi=(\varphi_S)_{S\subseteq\{1,\dots,d\}}$ be a family of
  hypothesis tests for the null hypotheses
  $(\HOS)_{S\subseteq\{1,\dots,d\}}$ 
  which achieve level
  $\alpha\in(0,1)$.  Then, %
 $ \P\left(\hat{S}(\varphi)\subseteq
    S^*\right)\geq 1-\alpha$.
\end{proposition}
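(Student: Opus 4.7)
The plan is to reduce the coverage statement to the single event that the test associated with the true invariant set $S^*$ does not falsely reject. The logic is: the estimator $\hat{S}(\varphi)$ is defined as the intersection of all sets $S$ whose test $\varphi_S$ accepts, so including $S^*$ itself in that intersection immediately forces $\hat{S}(\varphi)\subseteq S^*$. Hence I only need to show that $S^*$ appears in the intersection with probability at least $1-\alpha$.

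First I would invoke Assumption~\ref{assumption:invariance} to conclude that $S^*$ is an invariant set with respect to $(\vY,\vX)$, which is exactly the content of the null hypothesis $\HOstar$. Thus $\HOstar$ holds true under the assumed model. Since the family $\varphi$ has level $\alpha$, this means $\P(\varphi_{S^*}=1)\leq\alpha$, i.e.\ the probability of rejecting $\HOstar$ is at most $\alpha$, so $\P(\varphi_{S^*}=0)\geq 1-\alpha$.

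Next I would argue set-theoretically: on the event $\{\varphi_{S^*}=0\}$, the index $S^*$ is among those appearing in the intersection defining $\hat{S}(\varphi)$ in \eqref{eq:Shat2}, and since an intersection of sets is a subset of each of its members, $\hat{S}(\varphi)\subseteq S^*$. Combining this with the previous step gives
\begin{equation*}
\P\bigl(\hat{S}(\varphi)\subseteq S^*\bigr)\;\geq\;\P\bigl(\varphi_{S^*}=0\bigr)\;\geq\;1-\alpha,
\end{equation*}
which is the claim. There is no real obstacle here; the argument is essentially a one-line consequence of the construction \eqref{eq:Shat2} together with the level-$\alpha$ property, and the only subtle point to flag is the convention that an intersection over an empty index set is defined as the empty set (already fixed after \eqref{eq:Shat}), so that even if every $\varphi_S$ rejects, the statement $\hat{S}(\varphi)\subseteq S^*$ still holds trivially.
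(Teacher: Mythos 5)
Your argument is correct and is exactly the standard one-line proof that the paper (and \citet{peters2016}, Theorem~1) has in mind: under Assumption~\ref{assumption:invariance} the hypothesis $\HOstar$ is true, so $\varphi_{S^*}$ accepts with probability at least $1-\alpha$, and on that event $S^*$ participates in the intersection \eqref{eq:Shat2}, forcing $\hat{S}(\varphi)\subseteq S^*$. No gaps; your remark about the empty-intersection convention covering the case where all tests reject is a nice touch.
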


In the following paragraph we compare our framework with that of
\citet{peters2016}. While most parts are similar, the main
difference is that we have phrased our framework without the use of
environments.  \citet{peters2016} is then contained as a special case,
more precisely their Equation~(10) is contained within our null
hypothesis~\eqref{eq:basic_nullhypothesis}.  Our more general
formulation comes with three benefits: (1) It allows a mathematically
rigorous treatment of data that are generated by systems which change
between every observation (while satisfying Assumption~1). (2) It
allows constructing tests for a wider class of alternative hypotheses
(e.g., smoothly changing systems), while at the same time justifying
any test based on environments \citep[e.g.,][Method I and Method
II]{peters2016}. (3) In particular, it allows for a more
straightforward justification of procedures for pooling environments,
see \citet[discussion by Richardson and Robins on page
1003]{peters2016}.

\subsection{Relation to causality and discussion of assumptions}\label{sec:causalinterp}

An insightful interpretation of the set $S^*$ is given in the context of
causality. Under certain assumptions, the set $S^*$ corresponds to the
set of direct causes (or parents) of the target variable $Y$. This is
best understood in the framework of structural causal models (SCMs)
\citep[e.g.,][]{Bollen1989,Pearl2009}.
\begin{definition}[structural causal models]
  \label{def:SCM}
  A vector of variables $(X^0,X^1,\dots,X^d)\in\R^{d+1}$ ($X^0$ plays
  later the role of the target variable $Y$) is said to satisfy a
  structural causal model (SCM) if for all $j\in\{0,1,\dots,d\}$ there
  exist functions $f^j:\R^{\abs{\pa(j)}+1}\rightarrow\R$ and
  jointly independent noise variables $\epsilon^j$ satisfying
  \begin{equation*}
    X^j\leftarrow f^j(X^{\pa(j)},\epsilon^j),
  \end{equation*}
  where the sets $\pa(j)$ denote the parents of the variable $X^j$ in
  the directed (possibly cyclic) graph corresponding to the structure
  of the SCM.
\end{definition}
Recall that our procedure can be applied to any data generating
process $(\vY, \vX)$ which satisfies the structural invariance
(Assumption~\ref{assumption:invariance}). In the following example, we
define a class of causal models, which satisfies this
invariance and use it as an illustration of the types of
assumptions necessary to fit to our framework.

\begin{example}[SCM with linear Gaussian target]
  Assume that the data $(\vY,\vX)$ is generated in the following way.
  For all $t\in\{1,\dots,n\}$ the variables
  $(Y_t,X_t)=(Y_t=X^0_t,X^1_t,\dots,X^d_t)$ are generated by
  potentially different SCMs such that the structural assignment of
  $Y$ is linear Gaussian, does not depend on $Y$, i.e., there is no
  direct feedback loop, and is fixed across all time points. That is,
  there exists $\sigma\in(0,\infty)$ and $\beta \in \R^{|\pa(0)|}$
  such that for all $t\in\{1,\dots,n\}$ it holds that
  $Y_t\leftarrow X^{\pa(0)}_t\beta+\epsilon^0_t$ with
  $\epsilon^0_t\sim\mathcal{N}(0,\sigma^2)$, and
  $0\notin\operatorname{AN}(0)$, where $\operatorname{AN}(0)$ denotes
  the ancestors and $\pa(0)$ the parents of $Y$. Furthermore, assume that for all
  $t\in\{1,\dots,n\}$ it holds that
  \begin{equation}
    \label{eq:independence_assumption}
    \epsilon^0_t\independent\left\{\epsilon^j_s\mid\forall
    j\in\operatorname{AN}(0), s\in\{1,\dots,n\}\right\}.
  \end{equation}
  Then, similar to \citet[Proposition~1]{peters2016},
  Assumption~\ref{assumption:invariance} is satisfied for the parents
  of $Y$, namely $S^*=\pa(0)$. In particular, this motivates the term
  plausible causal predictors used in \eqref{eq:Shat}.
\end{example}

This causal model allows for any type of intervention on
the predictor variables $(X^1,\dots,X^d)$ at any time. In particular,
this means we do not need to worry about what or when changes occur on
the predictors. In contrast, the restriction that the structural
assignment as well as the noise of $Y$ are not allowed to change
across time implies that no direct interventions on $Y$ are
permitted. This is a reasonable assumption, for example, if $Y$ is a phenotype and the predictors are
gene activities, measured in a time-course experiment, or if $Y$ is a
macro economic indicator and the set of predictors contain all
possible variables which could be used to intervene on $Y$ (see our
monetary policy example in Section~\ref{sec:expreal}). 

A feature of the invariant causal prediction procedure is that we
expect it to be conservative with respect to violations of its
assumptions. For example, if there was a direct intervention on $Y$,
it is expected that all sets are rejected, which results in the empty
set. Conversely, the procedure also remains conservative in the
absence of interventions on the predictors, as the empty set remains
invariant in that case. The resulting output would therefore be
correct (although uninformative), as expected by the coverage property in
Proposition~\ref{thm:empirical_coverage}.

In causal network discovery, the goal is often to infer the entire
causal graph. Our framework is different in this respect, as it aims
to only infer the parents of a single target variable. This comes with
the advantage that we only need to locally infer the dependence from
$Y$ (one node in the graph) on its causal predictors. It further
allows us to use heterogeneous data without worrying about the types
of interventions it may contain, except that they are assumed to not
directly affect the target variable $Y$. The latter is the reason why
we cannot simply apply the methodology to the full network: we need
interventions to obtain informative answers, but these interventions
are not allowed to act directly on the target variable $Y$. Thus, we
cannot use each variable in the graph as a target.  However, when
having the additional information on the time of the interventions and
which variables they directly affect, our method can be iteratively
applied to each variable (i.e., node) in the graph separately by
removing all observations belonging to interventions on that specific
variable; hence allowing us to recover the entire causal structure.

\paragraph{Hidden variables.}
The invariant causal prediction framework used here is also robust to
the presence of hidden variables. For example, any hidden variables
that are not direct parents of the target $Y$ are permitted. More
generally, it can be shown that for settings with arbitrary
constellations of hidden variables (allowing for hidden confounding
between $Y$ and the predictors) and given suitable assumptions
(including a faithfulness assumption on the underlying causal graph)
the plausible causal set estimator still satisfies the following
(slightly weaker) coverage property
\begin{equation*}
  \tilde{S} \subseteq \operatorname{AN}(Y),
\end{equation*}
where $\operatorname{AN}(Y)$ are the ancestors of $Y$. The precise
result, is taken from Proposition~5 in \citet{peters2016}.
Selected ancestor variables can be interpreted as a true rather than a
false positive. Thus, this result establishes a useful robustness
property: the price to be paid for allowing hidden variables is a loss
of detection power.

\section{Tests for $\HOS$ based on scaled residuals}\label{sec:scaled_residuals}

In this section, we construct a general class of tests for $\HOS$,
based on an exact resampling procedure. These tests rely on the linear
Gaussian model that is assumed to exist for the invariant set $S^*$
given in Assumption~\ref{assumption:invariance}.

\subsection{Scaled residual tests}\label{sec:scaledresidualtest}

Consider a fixed invariant set $S\subseteq\{1,\dots,d\}$. As a first
step, observe that by reformulating Definition~\ref{def:invariantset}
in matrix notation it holds that
\begin{equation}
  \label{eq:HOS_vec}
  \HOS:
  \begin{cases}
    \quad\exists \beta\in(\R\setminus\{0\})^{\abs{S}},\,\sigma\in(0,\infty):\\
    \quad\vY=\vX^S\beta+\vepsilon,\text{ with }
    \vepsilon\independent\vX^S\text{ and }
    \vepsilon\sim\mathcal{N}(\mathbf{0},\sigma^2\vI),
  \end{cases}
\end{equation}
where $\vepsilon\coloneqq(\epsilon_1,\dots,\epsilon_n)$. Whenever the
set $S$ is not invariant, the dependence
of $\vY$ on $\vX^{S}$ is not given by the same linear function across
all time points. We can therefore construct a test for $\HOS$ by
performing a goodness of fit test of the linear Gaussian model
\begin{equation}
  \label{eq:linGaussmodel}
  \vY=\vX^S\beta+\boldsymbol{\epsilon},\text{ with }
  \boldsymbol{\epsilon}\sim\mathcal{N}(\mathbf{0},\sigma^2\vI).
\end{equation}
This motivates a two-step procedure; (1) use linear regression to fit
a linear Gaussian model, (2) test whether the residuals are i.i.d.
Gaussian distributed. \citet{shah2015} give a general methodology for
dealing with such tests. We adapt their method to our setting and
notation and consider several specific choices of tests which apply to
our problem. Define the projection matrix
$\vP^S_{\vX}\coloneqq
\vX^S\left((\vX^S)^{\top}\vX^S\right)^{-1}(\vX^S)^{\top}$.  Then the
residuals resulting from an OLS-fit of the model
\eqref{eq:linGaussmodel} are given by
  $\resid^S\coloneqq(\vI-\vP^S_{\vX})\vY$. 
Furthermore, assuming model \eqref{eq:linGaussmodel} is true, i.e.,
$\HOS$ holds, the scaled residuals
$\sresid^S\coloneqq\resid^S/\norm{\resid^S}_2$ can be expressed as
\begin{equation}
  \label{eq:scaled_resid}
  \sresid^S
  \coloneqq\frac{(\vI-\vP^S_{\vX})\vY}{\norm{(\vI-\vP^S_{\vX})\vY}_2}
  =\frac{(\vI-\vP^S_{\vX})\noise}{\norm{(\vI-\vP^S_{\vX})\noise}_2}
  =\frac{(\vI-\vP^S_{\vX})\snoise}{\norm{(\vI-\vP^S_{\vX})\snoise}_2},
\end{equation}
where $\snoise\coloneqq\noise/\norm{\noise}_2$ is the scaled noise.
Given that model \eqref{eq:linGaussmodel} is true, one can thus sample
from the distribution of $\sresid^S\mid\vX=\vx$ by a resampling
procedure using that $\snoise\sim\mathcal{N}(\mathbf{0},\vI)$. More
formally, assume we are given a measurable function
$T:\R^{n}\rightarrow\R$ and a let $B\in\N$ be the number of
simulations. Then, we can use a resampling approach to construct a
sequence of cut-off functions $c_{T,B}:\R^{n\times d}\rightarrow\R$
such that the sequence of hypothesis tests
$(\varphi^S_{T,B})_{B\in\N}$ defined for all $B\in\N$ by
\begin{equation}
  \label{eq:hyptest}
  \varphi^S_{T,B}(\vY,\vX)\coloneqq\mathds{1}_{\{\abs{T(\sresid^S)}>c_{T,B}(\vX)\}},
\end{equation}
achieves correct asymptotic level as $B$ goes to infinity. To see this, fix a
significance level $\alpha\in(0,1)$, let
$\snoise_1,\snoise_2,\dots\iid\mathcal{N}(\mathbf{0},\vI)$ and for all
$\vx\in\R^{n\times d}$ and for all $b\in\N$ define the random
variables
$\sresid_b^{S,\vx}\coloneqq {(\vI-\vP^S_{\vx})\snoise_b}/{\norm{(\vI-\vP^S_{\vx})\snoise_b}_2}$, 
which are i.i.d. copies of $\sresid^S\mid\vX=\vx$. Moreover, for all
$\vx\in\R^{n\times d}$ define
\begin{equation}
  \label{eq:cutoff_fun}
  c_{T,B}(\vx)\coloneqq\lceil B(1-\alpha)\rceil\text{-largest value
    of } \abs{T(\sresid_1^{S,\vx})},\dots,\abs{T(\sresid_B^{S,\vx})}.
\end{equation}
Based on the convergence of the empirical quantiles to the population
quantiles, we get that the test in
\eqref{eq:hyptest} has the following level guarantee.
\begin{proposition}[level of the scaled residual test]
  \label{thm:srtest_level}
  For all measurable $T:\R^n\rightarrow\R$, based on the
  scaled residuals $\sresid^S$, and for all $(\vY,\vX)$ with
  $\P^{(\vY,\vX)}\in\HOS$, the hypothesis 
  test
  $\varphi^S_{T,B}$ defined in \eqref{eq:hyptest} achieves %
  level $\alpha$ as
  $B \rightarrow \infty$, i.e.,
  \begin{equation*}
    \lim_{B\rightarrow\infty}\P\left(\varphi^S_{T,B}(\vY,\vX)=1\right)=\alpha.
  \end{equation*}
\end{proposition}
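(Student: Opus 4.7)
The proof plan is to condition on $\vX$ and exploit that, under $\HOS$, the conditional distribution of the scaled residuals $\sresid^S$ given $\vX=\vx$ is exactly the distribution from which the resamples $\sresid^{S,\vx}_b$ are drawn, so that the randomized cutoff $c_{T,B}(\vx)$ is an empirical quantile from the very same law that governs $T(\sresid^S)\mid\vX=\vx$.

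First, I would fix $\vx\in\R^{n\times d}$ and use the identity \eqref{eq:scaled_resid} to argue that under $\HOS$ the random vector $\sresid^S\mid\vX=\vx$ equals in distribution $(\vI-\vP^S_{\vx})\snoise/\lVert(\vI-\vP^S_{\vx})\snoise\rVert_2$, which is precisely the law from which each $\sresid^{S,\vx}_b$ is drawn, since $\snoise$ is independent of $\vX$ and Gaussian. Therefore, conditional on $\vX=\vx$, the quantities $|T(\sresid^S)|,|T(\sresid^{S,\vx}_1)|,\dots,|T(\sresid^{S,\vx}_B)|$ are i.i.d. Let $F_{\vx}$ denote their common distribution function and let $q_{1-\alpha}(\vx)$ denote its $(1-\alpha)$-quantile.

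Next, I would apply the Fundamental Theorem of Statistics (Glivenko--Cantelli): the empirical distribution function $\hat F_{\vx,B}$ based on $|T(\sresid^{S,\vx}_1)|,\dots,|T(\sresid^{S,\vx}_B)|$ converges uniformly almost surely to $F_{\vx}$, and consequently the empirical $(1-\alpha)$-quantile $c_{T,B}(\vx)$ from \eqref{eq:cutoff_fun} converges almost surely to $q_{1-\alpha}(\vx)$ at every point of continuity of $F_{\vx}^{-1}$. Conditional on $\vX=\vx$, the resamples are independent of $\sresid^S$, so
\begin{equation*}
\P\bigl(|T(\sresid^S)|>c_{T,B}(\vx)\,\big|\,\vX=\vx\bigr)
\;\xrightarrow[B\to\infty]{}\;
\P\bigl(|T(\sresid^S)|>q_{1-\alpha}(\vx)\,\big|\,\vX=\vx\bigr)=\alpha,
\end{equation*}
where for the last equality I invoke continuity of $F_{\vx}$ at $q_{1-\alpha}(\vx)$, which is automatic because the distribution of $|T(\sresid^S)|\mid\vX=\vx$ is that of a measurable function of a uniform draw on the unit sphere of $\mathrm{range}(\vI-\vP^S_{\vx})$ and, aside from possible atoms introduced by $T$, is otherwise continuous; the assertion in the proposition treats $\alpha$ as the asymptotic level, so at atoms one replaces the equality by the appropriate upper Riemann value without affecting the stated limit.

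Finally, I would integrate this conditional statement over the law of $\vX$ using the dominated convergence theorem (the indicator $\mathds{1}_{\{|T(\sresid^S)|>c_{T,B}(\vX)\}}$ is bounded by $1$) to conclude $\lim_{B\to\infty}\P(\varphi^S_{T,B}(\vY,\vX)=1)=\alpha$. The only subtle step is the handling of potential atoms in $F_{\vx}$ at the quantile: if $T$ is such that $F_{\vx}$ has a jump at $q_{1-\alpha}(\vx)$, convergence to exactly $\alpha$ may fail and one would at best get $\limsup\le\alpha$. I expect this continuity issue to be the main obstacle, and a clean way to dispatch it is either to assume $T$ is chosen so $F_{\vx}$ is continuous (which holds for the concrete statistics $T$ used later in the paper) or to state the result as a level guarantee $\limsup_{B\to\infty}\P(\varphi^S_{T,B}=1)\le\alpha$ and match it with a liminf via a strict-inequality convention in the rejection rule.
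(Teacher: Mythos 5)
Your proof takes essentially the same route as the paper's own argument (given in the appendix for the more general time-lag version): condition on the design, observe that under $\HOS$ the resampled vectors are i.i.d.\ copies of the conditional law of the scaled residuals, invoke almost-sure convergence of empirical quantiles to population quantiles, and integrate via dominated convergence. Your closing caveat about a possible atom of $F_{\vx}$ at the $(1-\alpha)$-quantile is a genuine subtlety that the paper's proof silently assumes away when it asserts the final conditional rejection probability equals exactly $\alpha$, so flagging it is a point in your favour rather than a gap.
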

More details on the proof can be found in
Appendix~\ref{sec:tslevelproof}, where we prove a more general result
including time lags. For any measurable test statistic $T$ we have,
therefore, constructed a hypothesis test which achieves correct
asymptotic level for testing the null hypothesis $\HOS$. It is clear
that the power properties of such a test depend on the alternative and
the form of the function $T$. In the next section, we give specific
choices of $T$ that allow us to detect alternatives for which $S$ is
not an invariant set.

\subsection{Choosing test statistics}\label{sec:diff_teststat}

Recall that the invariance of a set $S$ (see
Definition~\ref{def:invariantset}) corresponds to a time invariance of
the conditional distribution, i.e.
\begin{equation}
  \label{eq:conditionaldist_time}
  \forall t,s\in\{1,\dots,n\}:\quad\P^{Y_t|X_t^S}=\P^{Y_s|X_s^S}.
\end{equation}
Violations to this invariance include conditional distributions that
change at some, many or even at every time point, see
Figure~\ref{fig:alternatives} for three examples.
\begin{figure}[h!]
  \spacingset{1}
  \centering
  \input{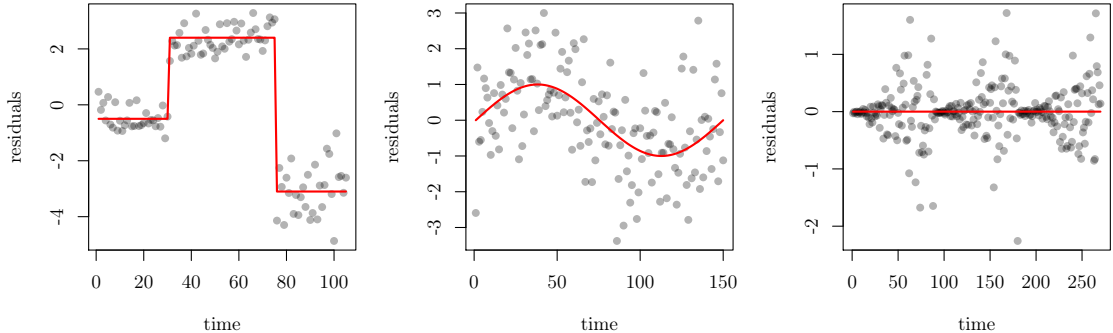}
  \caption{Plots illustrating the residuals of a pooled regression vs
    time for three different alternatives. In all cases $\HOS$ is
    violated such that we obtain residuals that are not i.i.d.\ over
    time.  This is due to block-wise shifts in mean and variance (left
    plot), a gradually shifting mean (center plot), varying higher
    moments (right plot).  The red lines represent the means as
    functions of time.  }
  \label{fig:alternatives}
  \spacingset{\spacevar}
\end{figure}
These can, e.g., be generated by noise interventions in SCMs, i.e.,
interventions that change the mean or variance of the noise for the
predictor variables. However, not all types of interventions are
necessarily captured as a time dependence in the residual distribution
alone.

\begin{example}[non-detectable structure changes in residual
  distribution vs time]
  \label{ex:non_detectable_structurechanges}
  Assume we are given data from the following generative model
  \begin{equation*}
    Y_t=\beta_tX_t+\epsilon_t,\quad t\in\{1,\dots,200\}
  \end{equation*}
  with $X_t,\epsilon_t\iid\mathcal{N}(0,1)$, $\beta_t=1$ for
  $t\in\{1,\dots,100\}$ and $\beta_t=-1$ for $t\in\{101,\dots,200\}$.
  Then, the regression parameter resulting from a pooled ordinary
  least squares regression is
  given by
 $   \hat{\beta}_{\operatorname{OLS}}\approx 0$. 
  In particular, this implies that it is impossible to detect the structure
  change in a residuals versus time plot. Instead, one can group the
  data into the environments $e_1=\{1,\dots,100\}$ and
  $e_2=\{101,\dots,200\}$ and then consider the residuals versus the
  predictors on each environment individually. The result is that on
  $e_1$ the residuals are increasing with slope one and on $e_2$ the
  residuals are decreasing with slope one, which clearly contradicts
  the invariance assumption. This shows, that some violations are only
  detectable in the pooled residuals if we additionally use
  information contained in the ordering of the predictors rather than
  only using the time ordering.
\end{example}
The example illustrates that certain types of interventions (in
particular if they change the structure) can lead to alternatives that
are hard (or even impossible) to detect by looking only for changes in
the distribution of the residuals from a pooled regression across
time. This implies certain types of violations are only detected by
also considering information from predictors, for example, by checking
that the regression coefficients remain constant. In the following two
subsections we consider specific types of alternative hypotheses and
discuss which types of test statistics can be used to detect them. The
choice of the test statistic only affects the power of our method,
meaning that any of the following test statistics will result in tests
which control the type I error as described in
Section~\ref{sec:scaledresidualtest}.

\subsubsection{Change point alternatives}\label{sec:test_stat_cp}

Throughout this subsection, we want to construct test statistics that
focus power on detecting deviations from invariance where the
interventions occur in a block-wise manner, i.e., non-invariances
occur at specific change points. As described in the methodology in
the previous sections, we are not interested in estimating the change
points from data.
To be
more precise, we now introduce some notation related to change point
models. Consider tuples of the form
$\CPt=(t_1,\dots,t_{L})\in\{1,\dots,n-1\}^{L}$ satisfying $t_i<t_j$
for all $i<j$. For every such tuple $\CPt$ define for all
$i\in\{1,\dots,L+1\}$ the following block-wise environments
\begin{equation*}
  e_i(\CPt)\coloneqq
  \begin{cases}
    \{1,\dots,t_1\} \quad &\text{if }i=1,\\
    \{t_{i-1}+1,\dots,t_i\} \quad &\text{if }1<i\leq L,\\
    \{t_{L}+1,\dots,n\} \quad &\text{if }i=L+1.\\
  \end{cases}
\end{equation*}
Moreover, denote by
$\mathcal{E}(\CPt)\coloneqq\{e_1(\CPt),\dots,e_{L+1}(\CPt)\}$ the
collection of the $L+1$ environments. We will drop the tuple $\CPt$ in
the notation whenever it is clear from the context. We consider models
described by a fixed set of change points at which changes in the
experimental conditions can occur. The underlying change point model
can then be specified by the existence of a fixed (unknown) tuple of change points
$\CPt^*=(t_1^*,\dots,t_{L^*}^*)$ such that for all environments
$e\in\mathcal{E}(\CPt^*)$ it holds that
\begin{equation*}
  (Y_t,X_t)_{t\in e}\iid F_e,
\end{equation*}
where $F_e$ are fixed distributions depending only on the
environments.

Given the true collection of change points $\CPt^*$, this reduces to
the original ideas of invariant causal prediction introduced by
\citet{peters2016}. Here, we are interested in the case when the
change points are unknown and we no longer have the correct
environments.
A naive approach would be to use an existing change point detection
method and plug-in the estimated segments into the invariant causal
prediction (ICP) method from \citet{peters2016}. As discussed in
Remark~\ref{rem:clustering}, however, a change point detection method
is only allowed to be used on data from the response variable $Y$,
since otherwise the coverage property of the procedure could be
destroyed. This implies a major restriction: an example in
Remark~\ref{rem:changepoint} shows that changes in the covariates $X$
might be non-detectable in the response variable $Y$ and thus, any
change point method applied on the response variable $Y$ might brake
down. Here, we instead propose a procedure which exploits changing
structures among all the variables and directly optimizes the power to
detect (non-)invariances. This is done by simultaneously testing for
(non-)invariances over all potential environments based on a grid of
potential change points, and encoding this multiple testing problem in
the test statistic. Our resampling approach
(see Section~\ref{sec:scaled_residuals}) ensures that the generally strong
dependencies between these tests are taken into account and one only
pays a small price for the somewhat higher degree of multiple testing
adjustment.

Our goal is to construct test statistics $T=T(\sresid^S)$, based on
scaled residuals from a regression on the covariates $S$ which
are capable of capturing potential violations in model
\eqref{eq:linGaussmodel} that can occur due to the underlying change
points $\CPt^*$. Essentially, this means that $\abs{T(\sresid^S)}$
should be small whenever the model \eqref{eq:linGaussmodel} is true
and large whenever it is false. Violations in the invariance occur
due to differences in the structural form of model
\eqref{eq:linGaussmodel} between two different environments
$e,f\in\mathcal{E}(\CPt^*)$. Therefore, the idea is to take a
collection of environments
$\mathcal{E}\subseteq\mathcal{P}(\{1,\dots,n\})$ that makes use of
the block-wise structure of the data and then combine all pairwise
comparisons between these environments. To be more precise, for all
$e,f\in\mathcal{E}$ we construct several test statistics
$T_{e,f}^i$ which detect differences between the environments
$e$ and $f$. We combine them to single test statistics
either by
\begin{equation}
  \label{eq:teststat_multi1}
  T^{\max,\mathcal{F}}_i(\sresid^S)\coloneqq\max_{(e,f)\in\mathcal{F}}\abs[\big]{T_{e,f}^i(\sresid^S)},
\qquad \text{ 
or by } \qquad
  T^{\operatorname{sum},\mathcal{F}}_i(\sresid^S)\coloneqq\sum_{(e,f)\in\mathcal{F}}\abs[\big]{T_{e,f}^i(\sresid^S)},
\end{equation}
where $\mathcal{F}\subseteq\mathcal{E}\times\mathcal{E}$. Details on
how to construct the collections of environments
$\mathcal{E}\subseteq\mathcal{P}(\{1,\dots,n\})$ and the corresponding
collection $\mathcal{F}$ are given in Section~\ref{sec:choiceofE}. For
the theory part we consider only
\begin{equation}
  \label{eq:F1}
  \mathcal{F}^{1}\coloneqq\{(e,f)\in\mathcal{E}\times\mathcal{E}\mid
e\cap f=\varnothing\}.
\end{equation}
The test statistics $T_{e,f}$ should be capable of detecting
differences between two environments, in the following we consider two
options: (1) Test statistics that perform a regression step in order
to incorporate information from the predictors, which are then capable
of (at least in the large sample limit) detecting any violation of the
invariance. (2) Test statistics that only check for changes in the
pooled residual distribution, which have the advantage of being
computationally faster but are not capable of detecting all violations
(see Example~\ref{ex:non_detectable_structurechanges}).

\paragraph{Detecting block-wise shifts in the regression of the scaled
  residuals on predictors}\mbox{}\\
In the following we construct test statistics which are capable of
detecting the following two types of violations of model
\eqref{eq:linGaussmodel} that can arise from an underlying change
point model:
\begin{compactitem}
\item[(i)] difference in the regression coefficients: $\beta_{e,S}\neq\beta_{f,S}$
\item[(ii)] difference in the noise variance: $\sigma_{e,S}^2\neq\sigma_{f,S}^2$
\end{compactitem}
where $\beta_{e,S}$, $\beta_{f,S}$, $\sigma_{e,S}$ and $\sigma_{f,S}$
are population least squares regression coefficients and residual
variances on the environments $e,f\in\mathcal{E}(\CPt^*)$ when
regressing $\vY$ on $\vX^S$ restricted to environment $e$ and
$f$ respectively. Both of these violations can be detected by regressing the scaled
residuals $\sresid^S$ on $\vX^S$ for each of the two environments $e$
and $f$. To this end, define for all possible environments
$h\subseteq\{1,\dots,n\}$ the regression coefficient and biased sample
variance of the scaled residuals regressed on $\vX^S_h$ by
\begin{equation*}
  \hgamma_{h,S}\coloneqq((\vX_h^S)^{\top}\vX^S_h)^{-1}(\vX_h^S)^{\top}\sresid^S_h
  \quad\text{ and }\quad
  \hat{s}^2_{h,S}\coloneqq\frac{(\sresid^S_h-\vX^S_h\hgamma_{h,S})^{\top}(\sresid^S_h-\vX^S_h\hgamma_{h,S})}{\abs{h}},
\end{equation*}
respectively, where $\sresid^S_h$ is the restriction of $\sresid^S$ to
environment $h$. The idea is that both of the above violations (i) and
(ii) lead to differences between the regressions of at least two
environments $e,f\in\mathcal{E}(\CPt^*)$. It is possible to test
for either of the two violations individually using the test
statistics
\if0
\begin{equation}
  \label{eq:teststat_beta}
  T_{e,f}^1(\sresid^S)\coloneqq\norm{\hgamma_{e,S}-\hgamma_{f,S}}_2,
\end{equation}
for differences in the regression coefficients and the test statistic
\begin{equation}
  \label{eq:teststat_sigma}
  T_{e,f}^2(\sresid^S)\coloneqq\frac{\hat{s}^2_{e,S}}{\hat{s}^2_{f,S}}-1,
\end{equation}
\fi
\begin{align}
  T_{e,f}^1(\sresid^S)&\coloneqq\norm{\hgamma_{e,S}-\hgamma_{f,S}}_2,   \label{eq:teststat_beta}
\\
  T_{e,f}^2(\sresid^S)&\coloneqq\frac{\hat{s}^2_{e,S}}{\hat{s}^2_{f,S}}-1,   \label{eq:teststat_sigma}
\end{align}
for differences in the regression coefficients and for differences in the variance of the noise, respectively. The two resulting
hypothesis tests can then be combined with a Bonferroni correction,
which we refer to as decoupled test throughout this paper. A further
option is to test for both potential violations simultaneously by
using a test statistic similar to the Chow test \citep{chow1960} given
by
\begin{equation}
  \label{eq:teststat_single}
  T_{e,f}^3(\sresid^S)\coloneqq\frac{(\sresid^S_{e}-\vX^S_{e}\hgamma_{f,S})^{\top}(\sresid^S_{e}-\vX^S_{e}\hgamma_{f,S})}{\hat{s}^2_{f,S}\abs{e}}-1.
\end{equation}
For the remainder of this paper we denote the test based on
$T_{e,f}^3$ as the combined test. Unlike the Chow test we do not
normalize the denominator, which means that $T_{e,f}^3$ in
particular does not follow an F-distribution. Since we use an exact
resampling approach this will however also not be necessary here.

\paragraph{Detecting block-wise shifts in the scaled residuals}\mbox{}\\
As illustrated by Example~\ref{ex:non_detectable_structurechanges} we
are not capable of detecting all types of violations of the invariance
by checking for shifts in the distribution of the pooled residuals
across time. Nevertheless, many violations are in fact detectable in
this fashion. An example in which the underlying model has two change
points, leading to a block-wise time dependence of the (scaled)
residuals, is illustrated in the left plot of
Figure~\ref{fig:alternatives}. Such block-wise shifts in mean and
variance between two (true) environments
$e,f\in\mathcal{E}(\CPt^*)$ can for example be detected using the
following two test statistics
\begin{align}
  T_{e,f}^4(\sresid^S)&\coloneqq\frac{1}{\abs{e}}\sum_{i\in e}\sresid^S_i-\frac{1}{\abs{f}}\sum_{i\in f}\sresid^S_i
\qquad \text{ and}
  \label{eq:teststat_blockmeantime}\\
  T_{e,f}^5(\sresid^S)&\coloneqq\frac{(\sresid^S_{e})^{\top}\sresid^S_{e}}{(\sresid^S_{f})^{\top}\sresid^S_{f}}-1,   \label{eq:teststat_blockvartime}
\end{align}
where \eqref{eq:teststat_blockmeantime} detects shifts in mean and
\eqref{eq:teststat_blockvartime} detects shifts in variances. The main
advantage of these two test statistics is that they do not require an
extra regression step of the residuals on the predictors and are thus
computationally faster.

\subsubsection{Further alternatives}\label{sec:gradual_shifts}

The test statistics constructed in Section~\ref{sec:test_stat_cp} are
tuned to detect alternatives arising from an underlying change point
model. Depending on the setting, more natural alternatives might be
gradual mean shifts (see center plot in Figure~\ref{fig:alternatives})
or even more complicated shifts in the higher moments (see right plot
of Figure~\ref{fig:alternatives}). In the following, we give two
potential choices of test statistics which focus power on these two
latter alternatives. As discussed in
Section~\ref{sec:scaledresidualtest} the level properties hold , even
for finite samples, for any test statistic, allowing us to
choose arbitrary test statistics and plug them into our methods
described above.

\paragraph{Detecting gradual shifts in the scaled residuals}\mbox{}\\
Assume that we want to detect gradual mean shifts across time as
illustrated in the center plot in Figure~\ref{fig:alternatives}. A
natural idea is to use a non-linear (smooth) regression procedure to
regress the scaled residuals $\sresid$ given in
\eqref{eq:scaled_resid} on $time$. This results in an estimator of the
mean function $\mu_t=\E(\sresid_t)$, which satisfies $\mu_t\equiv 0$
under the null hypothesis $\HOS$ and captures the gradual shifts in
the alternative.  Essentially, the idea is to use a smoothing
procedure that best approximates the expected gradual shifts in the
alternative. For example, for very smooth shifts one could use
generalized additive models (GAM) \citep{gam}, implemented in the
\texttt{R}-package \texttt{mgcv}, to get the non-linear smoothing fit
and then consider a measure of how far the smoother deviates from the
horizontal line at $0$. Possible measures include the area under the
smoother or the p-value corresponding to the hypothesis test which
tests whether all coefficients are simultaneously zero. Along the same
lines one can also detect shifts in second moment by smoothing the
squared scaled residuals $\sresid^2$ across time.

\paragraph{Detecting more complicated shifts in the scaled residuals}\mbox{}\\
In case the alternatives we are interested in include nonlinear
changes of higher moments or other more complicated variations across
time, e.g., right plot in Figure~\ref{fig:alternatives}, one option is
to use the test statistic of a non-parametric independence test. For
example, we could use the Hilbert-Schmidt independence criterion
(HSIC) introduced by \citet{gretton2007} and consider the test statistic
\begin{equation*}
  T^{\operatorname{HSIC}}(\sresid^S)\coloneqq \widehat{\operatorname{HSIC}}(\sresid^S,\operatorname{time}),
\end{equation*}
where $\widehat{\operatorname{HSIC}}$ is the empirical version of
HSIC. The use of HSIC is motivated by the property that it allows to
construct independence tests which are capable of capturing any
type of dependence between random variables. An implementation of the
Hilbert-Schmidt independence criterion is given in the
\texttt{R}-package \texttt{dHSIC} \citep{pfister2016}.

\section{Detection rates} \label{sec:consistency}

While the assumption that a set $S$ is invariant in the sense of
Definition~\ref{def:invariantset} is sufficient for the scaled
residual test to achieve correct level for arbitrary test statistics (see
Proposition~\ref{thm:srtest_level}), we require additional constraints
on the underlying model in order to phrase and prove
results about the power. Additionally, any type of power analysis will
rely on the form of the test statistic. In this section, we restrict
ourselves to showing that the tests based on the statistics
\eqref{eq:teststat_beta}, \eqref{eq:teststat_sigma} and
\eqref{eq:teststat_single} are able to detect a large class of
alternatives resulting from an underlying change point model. In
particular, we show that they are consistent in the sense that they
have asymptotic power equal to one in the large sample limit, with
additional results on the rate of convergence.

\subsection{Asymptotic change point model}

Since we are interested in analyzing the large sample behavior of our
methods we need to formalize what a growing sample size means in our
change point model. We restrict ourselves to the case of a fixed
number of change points where additional data points are added in such
a way that the relative positions of the change points are
conserved. To this end, assume we are given data from a triangular
array $((Y_{n,t},X_{n,t})_{t\in\{1,\dots,n\}})_{n\in\N}$, which
satisfies the following assumption.
\begin{assumption}[asymptotic change point model]
  \label{assumption:asymptotic_cpmodel}
  There exists a fixed (unknown) collection of relative change points
  $\alpha^*_1,\dots,\alpha^*_L\in(0,1)$ satisfying for
  $i\in\{1,\dots,L\}$ that
  $\lim_{n\rightarrow\infty}t^*_{n,i}/n=\alpha^*_i$,
  where $\CPt^*_n\coloneqq(t^*_{n,1},\dots,t^*_{n,L})$ is the true set
  of change points for $n$ data points. Moreover, for all
  $i\in\{1,\dots,L+1\}$ it holds that
  \begin{equation*}
    (Y_{n,t},X_{n,t})_{t\in e_{i}(\CPt^*_n)}\iid F_{n,i},
  \end{equation*}
  for some fixed distributions $F_{n,i}$.
\end{assumption}
This, in particular, implies that each environment grows linearly as
the sample size increases, i.e., for all $i\in\{1,\dots,L+1\}$ it
holds that $\abs{e_{i}(\CPt^*_n)}=\landauO(n)$ as
$n\rightarrow\infty$. We assume a finite number of asymptotic change
points, and for any finite sample size, the position of these change
points is unconstrained. Moreover, our results can be extended to
settings where the number of change points increases with $n$, as long
as the size of the individual environments grows
polynomially. Finally, we require one further assumption.
\begin{assumption}[Multivariate normality]
  \label{assumption:normal}
  For all $n\in\N$ and for all $e\in\mathcal{E}(\CPt^*_n)$ the random
  variable $(Y_t,X_t)_{t\in e}$ has a multivariate normal distribution.
\end{assumption}
This assumption together with the i.i.d. assumption for
$(Y_t,X_t)_{t\in e}$ for any environment $e\in\mathcal{E}(\CPt^*_n)$
ensures that for any fixed set $S\subseteq \{1,\dots,d\}$ and for
every $e\in\mathcal{E}(\CPt^*_n)$ there exist unique parameters
$\beta_{e,S}$, $\mu_{e,S}$ and $\sigma_{e,S}$ such that for all
$t\in e$ it holds that
\begin{equation}
  \label{eq:lineardecomp_env}
  Y_t=\mu_{e,S}+X^S_t\beta_{e,S}+\epsilon_t, \text{ with
  }\epsilon_t\sim\mathcal{N}(0,\sigma_{e,S}^2)\text{ and }X^S_t\independent\epsilon_t.
\end{equation}
The important part is the independence between $X^S$ and the noise
$\epsilon$, which is no longer true if Assumption~\ref{assumption:normal} is
dropped.

\subsection{Asymptotic results}

Throughout this section, we assume that $(\vY_{n},\vX_{n})_{n\in\N}$
satisfies Assumption~\ref{assumption:asymptotic_cpmodel} and
Assumption~\ref{assumption:normal}. We show that for an
appropriate choice of environments it is possible to prove consistency
of our test, against the following alternatives,
\begin{align*}
  \HAS^n(a,b)\coloneqq\left\{P\,\right\rvert\,
  &(\vY_n,\vX_n)\sim P\text{ such that }\exists\, i,j\in\{1,\dots,L+1\}:\\
  &\left.a=\abs[\big]{\sigma^2_{e_i(\CPt^*_n),S}-\sigma^2_{e_j(\CPt^*_n),S}}>0
    \text{ and }
    b=\norm[\big]{\beta_{e_i(\CPt^*_n),S}-\beta_{e_j(\CPt^*_n),S}}_{2}>0\right\}.
\end{align*}
For all $n\in\N$, let
$\mathcal{E}_n\subseteq\mathcal{P}(\{1,\dots,n\})$ be a collection of
pairwise disjoint non-empty environments. In order to obtain a
consistency result we are interested in sequence of such collections
$(\mathcal{E}_n)_{n\in\N}$ satisfying the following 3 conditions,
\begin{enumerate}[widest=0]
\item[(C1)\phantom{,k}] there exists a sequence $(r_n)_{n\in\N}$ such
  that
  $$r_n=\min_{e_n\in\mathcal{E}_n}\abs{e_n}\quad\text{and}\quad
  \lim_{n\rightarrow\infty}r_n=\infty,$$\label{item:c1}
\item[(C2)\phantom{,k}] for all $i\in\{1,\dots,L+1\}$ there exists a sequence
  $(f_n)_{n\in\N}$ with $f_n\in\mathcal{E}_n$ and a constant $N\in\N$ such that for all
  $n\geq N$ it holds that $f_n\subseteq
  e_i(\CPt^*_n)$ and such that the sequences
  $(\sigma_{f_n}^2)_{n\in\N}$ and $(\beta_{f_n})_{n\in\N}$ are
  convergent and $\lim_{n\rightarrow\infty}\sigma_{f_n}^2>0$. \label{item:c2}
\item[(C3,k)] for all $e_n\in\mathcal{E}_n$ the matrix
  $1/\abs{e_n}\cdot\vX_{e_n}^{\top}\vX_{e_n}$ is
  $\P$-a.s. invertible and there exist $c,C\in\R$ and $k\in\N$ such that
  for all $n\in\N$ it holds that
  $$0<c\leq\E\left(\lambda_{\min}\left(\tfrac{1}{e_n}\vX_{e_n}^{\top}\vX_{e_n}\right)^{2k}\right)\leq\E\left(\lambda_{\max}\left(\tfrac{1}{e_n}\vX_{e_n}^{\top}\vX_{e_n}\right)^{2k}\right)\leq
  C<\infty$$ \label{item:c3}
\end{enumerate}
Conditions \hyperref[item:c1]{(C1)} and \hyperref[item:c2]{(C2)} are
in particular satisfied for $(\mathcal{E}^{G_n})_{n\in\N}$ where the
environments are constructed using a grid as defined
in \eqref{eq:unknownCPenvs} and given that the sequence of grids
$(G_n)_{n\in\N}$ becomes finer sufficiently fast as $n$
grows. Moreover, the moment condition \hyperref[item:c3]{(C3,k)} is satisfied
for any sequence of collections $(\mathcal{E}_n)_{n\in\N}$ and any
$k\in\N$ due to Assumption~\ref{assumption:normal}.

Based on these conditions we can prove consistency rates for the
tests based on fixed sets $S$, which results in a consistency of
the estimation of the set $\tilde{S}$.

\subsubsection{Rate consistency of tests for fixed sets $S$}

Consider a fixed non-invariant set $S$, then the following theorems
show that we are capable of detecting the non-invariance with a rate
depending on the type of test we use. We begin with the result for the
decoupled test. Recall, that the decoupled test
$\varphi_{\operatorname{decoupled},B}$ combines the test statistics in
\eqref{eq:teststat_beta} and \eqref{eq:teststat_sigma} and adjusts the
level with a Bonferroni correction, i.e.,
$\varphi_{\operatorname{decoupled},B}$ rejects the null hypothesis at
level $\alpha$ if and only if at least one of the tests
$\varphi_{T_1^{\max,\mathcal{F}^1(\mathcal{E}_n)},B}$ or
$\varphi_{T_2^{\max,\mathcal{F}^1(\mathcal{E}_n)},B}$ reject the null
hypothesis at level $\alpha/2$. The following theorem shows that it is
consistent.
\begin{theorem}[rate consistency of decoupled test]
  \label{thm:consistent_test_bonferroni}
  Assume Assumption~\ref{assumption:asymptotic_cpmodel}
  and~\ref{assumption:normal}, let $S\subseteq\{1,\dots,d\}$ and let
  $(\mathcal{E}_n)_{n\in\N}$ be a sequence of collections of pairwise
  disjoint non-empty environments with the properties
  \hyperref[item:c1]{(C1)}, \hyperref[item:c2]{(C2)} and
  \hyperref[item:c3]{(C3,k)} and assume that for all $n\in\N$ it holds
  that $(\vY_n,\vX_n)\sim P_n\in\HAS^n(a_n,b_n)$, where $a_n$ and $b_n$
  satisfy the following condition
  \begin{equation*}
    \frac{\abs{\mathcal{E}_n}^{\tfrac{1}{k}}}{\sqrt{r_n}}=\landauo(a_n)
    \quad\text{or}\quad
    \frac{\abs{\mathcal{E}_n}^{\tfrac{1}{k}}}{\sqrt{r_n}}=\landauo(b_n).
  \end{equation*}
  Then it holds that
  \begin{equation*}
    \lim_{n\rightarrow\infty}\lim_{B\rightarrow\infty}\P_{P_n}\left(\varphi_{\operatorname{decoupled},B}(\vY_n,\vX_n^S)=1\right)=1.
  \end{equation*}
\end{theorem}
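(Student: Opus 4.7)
The plan is to locate, under the alternative, a single disjoint pair of witness environments in $\mathcal{F}^1(\mathcal{E}_n)$ on which at least one of the two sub-tests carries a detectable signal, and then to show that this signal dominates the worst-case fluctuation of the corresponding max-statistic under the resampled null. By condition \hyperref[item:c2]{(C2)}, for each true block $e_i(\CPt^*_n)$ I can pick a witness $f_n^{(i)}\in\mathcal{E}_n$ with $f_n^{(i)}\subseteq e_i(\CPt^*_n)$, $\abs{f_n^{(i)}}\geq r_n$, and whose local population regression parameters $\beta_{f_n^{(i)},S}$ and $\sigma_{f_n^{(i)},S}^2$ converge to those of block $i$. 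The alternative $\HAS^n(a_n,b_n)$ supplies indices $i,j$ with $\abs{\sigma_{e_i^*,S}^2-\sigma_{e_j^*,S}^2}=a_n$ and $\norm{\beta_{e_i^*,S}-\beta_{e_j^*,S}}_2=b_n$; the witnesses $f_n^{(i)},f_n^{(j)}$ lie in different true blocks, hence are disjoint and $(f_n^{(i)},f_n^{(j)})\in\mathcal{F}^1(\mathcal{E}_n)$.

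Next I would quantify the signals on this witness pair. For any $h\subseteq e_i(\CPt^*_n)$, the decomposition \eqref{eq:lineardecomp_env} gives $\resid^S_h=\vX^S_h(\beta_{h,S}-\bar\beta^S_{\mathrm{OLS}})+(\vI-\vP^S_{\vX})\noise|_h$, where $\bar\beta^S_{\mathrm{OLS}}$ is the pooled OLS coefficient. Dividing by $\norm{\resid^S}_2$ and left-multiplying by $((\vX^S_h)^{\top}\vX^S_h)^{-1}(\vX^S_h)^{\top}$ yields
\[\hgamma_{h,S}\,=\,\frac{\beta_{h,S}-\bar\beta^S_{\mathrm{OLS}}}{\norm{\resid^S}_2}\;+\;\text{(local noise of order }(\abs{h}n)^{-1/2}\text{)}.\]
Since $\norm{\resid^S}_2^2/n$ converges to a strictly positive constant (a weighted average of block variances plus squared bias, obtained from the strong law together with \hyperref[item:c3]{(C3,k)} and Assumption~\ref{assumption:normal}), I would conclude $T^{\max,\mathcal{F}^1}_1(\sresid^S)\geq\norm{\hgamma_{f_n^{(i)},S}-\hgamma_{f_n^{(j)},S}}_2=\Theta(b_n/\sqrt n)+\landauOp((r_n n)^{-1/2})$. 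An analogous expansion of the ratio of biased variances on the same pair gives $T^{\max,\mathcal{F}^1}_2(\sresid^S)=\Theta(a_n)+\landauOp(1/\sqrt{r_n})$.

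The core of the proof is to bound the resampled critical values. By Proposition~\ref{thm:srtest_level}, as $B\to\infty$ the cut-off $c_{T,B}(\vX)$ converges to the $1-\alpha/2$ quantile of $T(\sresid^{S,\vX})$ when $\snoise\sim\mathcal{N}(\vNull,\vI)$. For each fixed pair $(e,f)\in\mathcal{F}^1$, rotational invariance of $\snoise$ combined with \hyperref[item:c3]{(C3,k)} yields uniformly $\E[\abs{T^1_{e,f}}^{2k}\mid\vX]=\landauO((r_n n)^{-k})$ and $\E[\abs{T^2_{e,f}}^{2k}\mid\vX]=\landauO(r_n^{-k})$. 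A Markov union bound over the $\abs{\mathcal{F}^1}\leq\abs{\mathcal{E}_n}^2$ pairs then produces, with probability tending to one under the null resampling,
\[c_{T_1^{\max,\mathcal{F}^1},\infty}(\vX)=\landauO\!\left(\abs{\mathcal{E}_n}^{1/k}(r_n n)^{-1/2}\right),\qquad c_{T_2^{\max,\mathcal{F}^1},\infty}(\vX)=\landauO\!\left(\abs{\mathcal{E}_n}^{1/k} r_n^{-1/2}\right).\]
Comparing with the signals above, the hypothesis $\abs{\mathcal{E}_n}^{1/k}/\sqrt{r_n}=\landauo(a_n)$ or $\landauo(b_n)$ guarantees that at least one Bonferroni-adjusted sub-test rejects with probability tending to one, yielding the claim after taking $B\to\infty$ first.

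The principal obstacle is the third step: obtaining the $\abs{\mathcal{E}_n}^{1/k}$ multiple-testing factor rather than the $\abs{\mathcal{E}_n}^{2}$ one gets from a crude union bound demands uniform $2k$-th moment control across every pair $(e,f)\in\mathcal{F}^1$, even though the shared scaled residual $\sresid^{S,\vX}$ couples all pairs and depends on the random design. The key trick is to exploit the explicit Gaussian-quotient representation of $\sresid^{S,\vX}$ in \eqref{eq:scaled_resid} together with rotational invariance of $\snoise$ to rewrite the relevant moments as tractable functions of $((\vX^S_h)^{\top}\vX^S_h)^{-1}$, whose $2k$-th spectral moments are bounded uniformly by \hyperref[item:c3]{(C3,k)}. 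Once this moment estimate is in hand, the remaining pieces---the strong-law limit for $\norm{\resid^S}_2^2/n$, the Taylor-style expansion identifying the signal, and the final comparison of orders---are routine.
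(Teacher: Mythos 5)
Your proposal is correct and follows essentially the same route as the paper: pick a witness pair inside two distinct true blocks via (C2), control the null distribution of each max-statistic by a $2k$-th moment bound plus a Chebyshev/union bound over the $\abs{\mathcal{E}_n}^2$ pairs (which is exactly where the $\abs{\mathcal{E}_n}^{1/k}$ factor comes from in Propositions~\ref{thm:asymptotic_dist_h0_tot2} and~\ref{thm:asymptotic_dist_ha2}), lower-bound the signal on the witness pair at rates $b_n$ and $a_n$ respectively, and conclude by Bonferroni after letting $B\to\infty$ first. The only cosmetic differences are that you carry the $1/\norm{\resid^S}_2\asymp n^{-1/2}$ normalization explicitly on both signal and null fluctuation (it cancels, as in the paper's Lemma~\ref{thm:expansion_T2}) and use a triangle-inequality lower bound where the paper invokes Paley--Zygmund.
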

A proof of this result is given in
Appendix~\ref{sec:proof_consistent2}. A different option is to use the
combined test based on the test statistic in
\eqref{eq:teststat_single} which tests for both shifts in regression
coefficients and shifts in variance simultaneously. Surprisingly, this
leads to a worse rate of detecting shifts in the regression
coefficients than for the decoupled test.
\begin{theorem}[rate consistency of combined test]
  \label{thm:consistent_test}
  Assume Assumption~\ref{assumption:asymptotic_cpmodel}
  and~\ref{assumption:normal}, let $S\subseteq\{1,\dots,d\}$ and let
  $(\mathcal{E}_n)_{n\in\N}$ be a sequence of collections of pairwise
  disjoint non-empty environments with the properties
  \hyperref[item:c1]{(C1)}, \hyperref[item:c2]{(C2)} and
  \hyperref[item:c3]{(C3,k)} and assume that for all $n\in\N$ it holds
  that $(\vY_n,\vX_n)\sim P_n\in\HAS^n(a_n,b_n)$, where $a_n$ and $b_n$
  satisfy the following condition
  \begin{equation*}
    \frac{\abs{\mathcal{E}_n}^{\tfrac{1}{k}}}{\sqrt{r_n}}=\landauo(a_n)
    \quad\text{or}\quad
    \frac{\abs{\mathcal{E}_n}^{\tfrac{1}{k}}}{\sqrt{r_n}}=\landauo(b_n^2).
  \end{equation*}
  Then it holds that
  \begin{equation*}
    \lim_{n\rightarrow\infty}\lim_{B\rightarrow\infty}\P_{P_n}\left(\varphi_{T_3^{\max,\mathcal{F}^1(\mathcal{E}_n)},B}(\vY_n,\vX_n^S)=1\right)=1.
  \end{equation*}
\end{theorem}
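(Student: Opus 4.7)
The plan is to mirror the two-step structure used for Theorem~\ref{thm:consistent_test_bonferroni}: (i)~control the resampling cutoff $c_{T^{\max}_3,B}(\vX_n)$, and (ii)~establish a matching asymptotic lower bound on the statistic $T^{\max,\mathcal{F}^1}_3(\sresid^S)$ under the alternative. The rate condition will then follow by comparison: the cutoff is shown to be $\landauOp(\abs{\mathcal{E}_n}^{1/k}/\sqrt{r_n})$, while the statistic is shown to be at least of order $a_n+b_n^2$, so consistency follows as soon as $\abs{\mathcal{E}_n}^{1/k}/\sqrt{r_n}=\landauo(a_n)$ or $\landauo(b_n^2)$.

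For step~(i), observe first that $T^3_{e,f}$ is invariant under joint rescaling of residuals by $\norm{\resid^S}_2$, so the scaled and raw versions produce the same statistic. Under $\HOS$ and conditional on $\vX_n$, the scaled residuals are uniformly distributed on the relevant sphere, so by Proposition~\ref{thm:srtest_level} the cutoff is governed by the null moments of $T^3_{e,f}$. Exploiting \hyperref[item:c3]{(C3,k)} to bound the eigenvalues of $(\vX^S_e)^{\top}\vX^S_e/\abs{e}$ together with standard Gaussian tail inequalities yields $\E_{\HOS}[\abs{T^3_{e,f}}^{2k}]=\landauO(1/r_n^k)$ for each disjoint pair. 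A union bound over the at most $\abs{\mathcal{E}_n}^2$ pairs in $\mathcal{F}^1(\mathcal{E}_n)$ combined with Markov's inequality then yields $c_{T^{\max}_3,B}(\vX_n)=\landauOp(\abs{\mathcal{E}_n}^{1/k}/\sqrt{r_n})$ as $B\to\infty$.

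For step~(ii), invoke \hyperref[item:c2]{(C2)} to fix sequences $e_n\subseteq e_i(\CPt^*_n)$ and $f_n\subseteq e_j(\CPt^*_n)$ with $\abs{\sigma^2_{e_i,S}-\sigma^2_{e_j,S}}=a_n$ and $\norm{\beta_{e_i,S}-\beta_{e_j,S}}_2=b_n$. Inserting and subtracting $\vX^S_{e_n}\hgamma_{e_n,S}$ inside the numerator of $T^3_{e_n,f_n}$ and using the normal equations $(\vX^S_{e_n})^{\top}(\sresid^S_{e_n}-\vX^S_{e_n}\hgamma_{e_n,S})=\mathbf{0}$ gives the key identity
\begin{equation*}
  (\sresid^S_{e_n}-\vX^S_{e_n}\hgamma_{f_n,S})^{\top}(\sresid^S_{e_n}-\vX^S_{e_n}\hgamma_{f_n,S}) = \abs{e_n}\hat{s}^2_{e_n,S} + \norm{\vX^S_{e_n}(\hgamma_{e_n,S}-\hgamma_{f_n,S})}_2^2.
\end{equation*}
Dividing by $\abs{e_n}\hat{s}^2_{f_n,S}$ and subtracting $1$ decomposes $T^3_{e_n,f_n}$ into the variance ratio $\hat{s}^2_{e_n,S}/\hat{s}^2_{f_n,S}-1$ plus a nonnegative quadratic-form term. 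Under Assumption~\ref{assumption:normal} the pooled-regression bias cancels in the difference $\hgamma_{e_n,S}-\hgamma_{f_n,S}$ (both sides carry the same pooled offset), so this difference consistently estimates $\beta_{e_i,S}-\beta_{e_j,S}$; the variance-ratio term is then of order $a_n/\sigma^2_{e_j,S}$ and the quadratic form is bounded below by a constant multiple of $b_n^2$ by the uniform lower eigenvalue bound in \hyperref[item:c3]{(C3,k)}.

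The main obstacle, and the source of the $b_n^2$ rate (weaker than the decoupled test's $b_n$), is precisely that $T^3_{e,f}$ is a scalar quadratic in the prediction residual on $e$, so any shift in the regression coefficients enters only through the squared quantity $\norm{\beta_{e_i,S}-\beta_{e_j,S}}_2^2$. Making this rigorous requires absorbing the $\landauOp(1/\sqrt{r_n})$ fluctuations of $\hgamma_{h,S}$ and $\hat{s}^2_{h,S}$ uniformly across the pairs in $\mathcal{F}^1(\mathcal{E}_n)$, which is handled with the same moment machinery from \hyperref[item:c3]{(C3,k)} used in step~(i). Combining the cutoff bound with the lower bound on the statistic and letting first $B\to\infty$ and then $n\to\infty$ concludes the proof.
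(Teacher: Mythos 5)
Your overall architecture (bound the resampled cutoff by $\landauOp(\abs{\mathcal{E}_n}^{1/k}/\sqrt{r_n})$ under the null, then lower-bound the statistic under the alternative) is the same as the paper's, and your step (i) is essentially Proposition~\ref{thm:asymptotic_dist_h0_tot}. Your Pythagoras identity is also correct and is arguably cleaner than the paper's Lemma~\ref{thm:expansion_T}: it exhibits $T^3_{e,f}=T^2_{e,f}+(\text{nonnegative quadratic form})$. But step (ii) has two genuine gaps. First, the claim that the statistic is ``at least of order $a_n+b_n^2$'' is false for a fixed ordered pair: the variance-ratio term $\hat{s}^2_{e_n,S}/\hat{s}^2_{f_n,S}-1$ carries the sign of $\sigma^2_{e_i,S}-\sigma^2_{e_j,S}$ and can be negative of magnitude $\asymp a_n$, while the quadratic form is nonnegative of magnitude $\asymp b_n^2$; when $a_n\asymp b_n^2$ these can cancel and $T^3_{e_n,f_n}$ can be $\landauop(a_n)$. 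You would need either to invoke the reversed pair $(f_n,e_n)\in\mathcal{F}^1$ (for which the variance term is positive, so your identity gives $T^3_{f_n,e_n}\geq T^2_{f_n,e_n}$ and the terms reinforce) or the paper's explicit three-case analysis in Part~2 of the proof of Proposition~\ref{thm:asymptotic_dist_ha}, which separates the regimes according to which of $a_n$, $b_n^2$ dominates $\omega_n$ and absorbs the non-dominant term into the tolerance. You do neither.

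Second, and more fundamentally, you never establish the required lower bound \emph{in probability}. All of your quantitative statements are of $\landauOp$ type, i.e.\ upper bounds; to conclude $\P(T>c)\to 1$ you need anti-concentration of the form $\P(\abs{T^3_{f_n,g_n}}/\omega_n\leq t)\to 0$, which is exactly \eqref{eq:lowerboundpart} in the paper. For the quadratic-form term this does not follow from \hyperref[item:c3]{(C3,k)}: that condition only lower-bounds $\E(\lambda_{\min}(\tfrac{1}{\abs{e_n}}\vX_{e_n}^{\top}\vX_{e_n})^{2k})$, which is compatible with $\lambda_{\min}$ being arbitrarily small with high probability, so your ``bounded below by a constant multiple of $b_n^2$ by the uniform lower eigenvalue bound'' is not justified. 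The paper converts the available first- and second-moment bounds on the quadratic form $C_n$ into an in-probability lower bound via the Paley--Zygmund inequality; some such second-moment argument is indispensable here and is the main technical content of the result that your proposal omits.
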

A proof of this result is given in
Appendix~\ref{sec:proof_consistent}.

\begin{remark}[uniform consistency]\label{rmk:uniform_consistency}
  The results in Theorems~\ref{thm:consistent_test_bonferroni}
  and~\ref{thm:consistent_test} can be extended to be
  uniform across the following alternatives
  \begin{equation*}
    \bar{H}^{n}_{A,S}(\bar{a},\bar{b})\coloneqq  \left\{P \in
      H^n_{A,S}(a,b) \,\rvert\, a\geq \bar{a},\, b\geq \bar{b} \right\},
  \end{equation*}
  i.e., across all alternatives with a fixed minimum signal. Then,
  given the same rates for $\bar{a}_n$ and $\bar{b}_n$ in the
  Theorems~\ref{thm:consistent_test_bonferroni}
  and~\ref{thm:consistent_test} we get the result
  \begin{equation*}
    \lim_{n\rightarrow\infty}\lim_{B\rightarrow\infty}\inf_{P_n\in\bar{H}^{n}_{A,S}(\bar{a}_n,\bar{b}_n)}\P_{P_n}\left(\varphi_{B}(\vY_n,\vX_n^S)=1\right)=1,
  \end{equation*}
  where $\varphi$ is either the combined or the decoupled test. The
  precise statement is given in
  Theorem~\ref{thm:uniform_consistent_test} in
  Appendix~\ref{sec:uniform_consistency}. In order to extend the
  proofs we additionally assume that the condition
  \hyperref[item:c2]{(C2)} is assumed to be uniform across
  $\bar{H}^{n}_{A,S}(\bar{a}_n,\bar{b}_n)$. Further details on this
  extension are given in Appendix~\ref{sec:uniform_consistency}.
\end{remark}

\subsubsection{Rate consistency of estimator $\hat{S}$}

We can also show that the estimator for the plausible causal
predictors $\hat{S}$ given in \eqref{eq:Shat2} converges to the set
$\tilde{S}$ in \eqref{eq:Shat} with the same rates as in the previous
section.

\begin{corollary}[rate consistency of estimator $\hat{S}$ (decoupled
  test)]
  \label{thm:consistent_method}
  Assume Assumption~\ref{assumption:asymptotic_cpmodel}
  and~\ref{assumption:normal}, let $(\mathcal{E}_n)_{n\in\N}$ a
  sequence of collections of pairwise disjoint non-empty environments
  with the properties \hyperref[item:c1]{(C1)},
  \hyperref[item:c2]{(C2)} and
  \hyperref[item:c3]{(C3,k)}. Additionally assume that there exists
  positive sequences $(a_n)_{n\in\N}$ and $(b_n)_{n\in\N}$ satisfying
  for all $n\in\N$ and for all $S\subseteq\{1,\dots,d\}$ with $\HOS$
  false that $(\vY_n,\vX_n)\sim P_n\in\HAS^n(a_n,b_n)$ and
  \begin{equation*}
    \frac{\abs{\mathcal{E}_n}^{\tfrac{1}{k}}}{\sqrt{r_n}}=\landauo(a_n)
    \quad\text{or}\quad
    \frac{\abs{\mathcal{E}_n}^{\tfrac{1}{k}}}{\sqrt{r_n}}=\landauo(b_n).
  \end{equation*}
  Moreover, for all fixed sets $S\subseteq\{1,\dots,d\}$ denote by
  $\varphi^S_{n,B}$ the hypothesis test given by
  $\varphi_{\operatorname{decoupled},B}$ and define the family of
  tests $\varphi_{n,B}=(\varphi^S_{n,B})_{S\subseteq\{1,\dots,d\}}$. Then it
  holds that
  \begin{equation*}
    \lim_{n\rightarrow\infty}\lim_{B\rightarrow\infty}\P_{P_n}\left(\hat{S}(\varphi_{n,B})=\tilde{S}\right)\geq1-\alpha.
  \end{equation*}
\end{corollary}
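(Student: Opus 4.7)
The plan is to combine the level guarantee of Proposition~\ref{thm:srtest_level} with the rate consistency of Theorem~\ref{thm:consistent_test_bonferroni} to upgrade the coverage result of Proposition~\ref{thm:empirical_coverage} into an equality. I decompose the target event as
$\{\hat{S}(\varphi_{n,B})=\tilde{S}\}
=\{\tilde{S}\subseteq\hat{S}(\varphi_{n,B})\}
\cap\{\hat{S}(\varphi_{n,B})\subseteq\tilde{S}\}$
and bound the asymptotic probability of each containment separately. Throughout, partition the power set of $\{1,\dots,d\}$ into the finite families $T=\{S:\HOS\text{ holds}\}$ and $F=\{S:\HOS\text{ fails}\}$.

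\textbf{First containment ($\tilde{S}\subseteq\hat{S}$).} For each $S\in F$ the assumption $P_n\in\HAS^n(a_n,b_n)$ with $a_n$ or $b_n$ dominating $|\mathcal{E}_n|^{1/k}/\sqrt{r_n}$ puts me in the hypothesis of Theorem~\ref{thm:consistent_test_bonferroni}, yielding $\lim_n\lim_B\P_{P_n}(\varphi_{n,B}^S=1)=1$. Since $|F|\leq 2^d$ is finite, a union bound gives that with asymptotic probability one every $S\in F$ is rejected. On that event the random family $A_{n,B}$ of accepted sets satisfies $A_{n,B}\subseteq T$, and by monotonicity of intersection
\begin{equation*}
\hat{S}(\varphi_{n,B})=\bigcap_{S\in A_{n,B}}S\;\supseteq\;\bigcap_{S\in T}S=\tilde{S}.
\end{equation*}

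\textbf{Second containment ($\hat{S}\subseteq\tilde{S}$).} This is essentially Proposition~\ref{thm:empirical_coverage} applied with the entire collection $T$ of true nulls rather than only $S^*$: whenever every $S\in T$ is accepted, $T\subseteq A_{n,B}$ and therefore $\hat{S}(\varphi_{n,B})\subseteq\bigcap_{S\in T}S=\tilde{S}$. The asymptotic level $\alpha$ of each individual test (Proposition~\ref{thm:srtest_level}), interpreted through the intersection structure exploited in the proof of Proposition~\ref{thm:empirical_coverage}, lower-bounds the asymptotic probability of this event by $1-\alpha$. Intersecting with the event from the first step closes the argument and yields the claimed bound.

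\textbf{Main obstacle.} The subtle point is the $1-\alpha$ bound in the second containment: a naive union bound over all true nulls in $T$ would cost a factor $|T|$. The resolution is the same intersection trick already used in Proposition~\ref{thm:empirical_coverage}---rather than simultaneously controlling every test in $T$, one argues that acceptance of a single invariant witness (namely $S^*\in T$, guaranteed by Assumption~\ref{assumption:invariance}) is enough, and the remaining sets in $T$ are absorbed by combining this single-test level control with the high-probability event from the first step that rules out acceptance of any $S\in F$.
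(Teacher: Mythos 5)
Your overall architecture is the one the paper uses: it writes $\P(\hat{S}(\varphi_{n,B})=\tilde{S})=\P(\hat{S}(\varphi_{n,B})\subseteq\tilde{S})-\P(\hat{S}(\varphi_{n,B})\subsetneq\tilde{S})$, bounds the first term below by $1-\alpha$ via the coverage property, and kills the second term with a union bound over the at most $2^d$ sets $S$ for which $\HOS$ is false, invoking Theorem~\ref{thm:consistent_test_bonferroni}. Your ``first containment'' step is exactly that second ingredient and is correct: rejecting all false nulls with probability tending to one forces $\tilde{S}\subseteq\hat{S}(\varphi_{n,B})$, since intersecting over a subfamily of the true nulls can only enlarge the intersection.

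The genuine gap is in your ``Main obstacle'' paragraph. On the event $\{S^*\text{ accepted}\}\cap\{\text{every false null rejected}\}$ the accepted family $A_{n,B}$ satisfies $\{S^*\}\subseteq A_{n,B}\subseteq T$, and all this yields is $\tilde{S}=\bigcap_{S\in T}S\subseteq\bigcap_{S\in A_{n,B}}S=\hat{S}(\varphi_{n,B})\subseteq S^*$ --- the wrong direction for the second containment. Nothing forces $\hat{S}(\varphi_{n,B})\subseteq\tilde{S}$ unless enough of the \emph{true} nulls are simultaneously accepted. Concretely, if $T=\{S^*,S_1\}$ with $\tilde{S}=S^*\cap S_1\subsetneq S^*$, then on the event where $S^*$ is accepted, $S_1$ is rejected, and all false nulls are rejected --- an event whose probability the level guarantee only bounds by roughly $\alpha$, not by $o(1)$ --- one gets $\hat{S}(\varphi_{n,B})=S^*\neq\tilde{S}$. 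The rejected invariant set $S_1$ is not ``absorbed'' by your first-step event, which controls only false nulls. A single-test level guarantee for the witness $S^*$ delivers $\P(\hat{S}(\varphi_{n,B})\subseteq S^*)\geq 1-\alpha$, which is Proposition~\ref{thm:empirical_coverage}, but not $\P(\hat{S}(\varphi_{n,B})\subseteq\tilde{S})\geq 1-\alpha$. The paper's proof does not take your single-witness detour; it directly asserts the bound $\P(\hat{S}(\varphi_{n,B})\subseteq\tilde{S})\geq 1-\alpha$ as a consequence of the coverage property and proceeds from there. As written, your derivation of that bound does not go through, and replacing it by an honest union bound over the true nulls would only give $1-|T|\alpha$.
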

A proof is given in Appendix~\ref{sec:proof_corollary}. Similar to
Theorem~\ref{thm:consistent_test} we get an equivalent result (with
worse detection rate) for the combined test. As explained in
Section~\ref{sec:causalinterp}, the set $\tilde{S}$ is a subset of the
parents of $Y$ (or in the case of hidden variables a subset of the
ancestors $Y$). Hence, this theorem shows that (under sufficient
interventions on the predictors) we are able to recover the correct
parents (or ancestors) with a known detection rate.

\section{Implementation} \label{sec:implementation}

Our methods are implemented in the \texttt{R}-package \texttt{seqICP}
available on CRAN. The package in particular includes all the test
statistics introduced in Section~\ref{sec:diff_teststat}. In this
section we discuss some additional details on the practical
implementation of our methods. A rough outline of our block-wise
procedures is given in Appendix~\ref{sec:support} in
Algorithm~\ref{alg:seqICP}. In contrast to the block-wise procedure,
our methods based on smoothing or general independence tests (see
Section~\ref{sec:gradual_shifts}) do not require a separation into
blocks of environments.%

\subsection{Choosing environments and comparison set}\label{sec:choiceofE}

There are many reasonable ways in which the set of comparisons
$\mathcal{F}$ can be chosen. The choice affects both empirical power
properties and computational complexity. This in particular leads to a
trade-off between the number of comparisons and the size of the
environments. As shown in Section~\ref{sec:consistency} this trade-off
can be chosen in such a way that our methods become consistent.

We consider two options of choosing the comparison set $\mathcal{F}$ which
work well in practice. The first option, which we already introduced
in \eqref{eq:F1}, is to use the choice from the
theoretical results where we compare all pairs of non-intersecting
environments, i.e.,
\begin{equation*}
  \mathcal{F}^1(\mathcal{E})=\left\{(e,f)\in\mathcal{E}\times\mathcal{E}\,\mid\,
  e\cap f=\varnothing \right\}.
\end{equation*}
A second computationally more efficient option is to not
compare all environments pairwise but to rather compare each
environment against its complement, i.e.,
\begin{equation}
  \label{eq:F2}
  \mathcal{F}^2(\mathcal{E})=\left\{(e,f)\in\mathcal{P}(\{1,\dots,n\})^2\,\mid\,e\in\mathcal{E}\text{
    and }
  f=\{1,\dots,n\}\setminus e \right\}.
\end{equation}

For each type of comparison we need to additionally choose the
collection of environments $\mathcal{E}$. A reasonable option is to
pick a grid $G=(g_1,\dots,g_m)$ on $\{1,\dots,n\}$ (where
$0<g_1<\cdots<g_m<n$) and then use
\begin{equation}
  \label{eq:unknownCPenvs}
  \mathcal{E}^{G}\coloneqq\bigcup_{\overset{k,\ell\in\{0,g_1,\dots,g_m,n\}}{k<\ell}}\left\{k+1,k+2,\dots,\ell\right\}.
\end{equation}
This collection of environments is in particular larger than the one
introduced in Section~\ref{sec:test_stat_cp},
i.e. $\mathcal{E}\left(\CPt\right)\subseteq\mathcal{E}^{\CPt}$, where
$\CPt$ is the set of change points. Given that the set of change
points is unknown one can simply take an equally spaced grid on
$\{1,\dots,n\}$. However, it is also possible to include some prior
knowledge about the approximate locations of change points into the
grid $G$.

In order to achieve the consistency rates from the theory
(Section~\ref{sec:consistency}) we could choose the size of the grid
such that $m$ is of the order $\log(n)$ and the size of each of the
$m+1$ environments tends to infinity as $n$ gets large. In particular,
the comparison sets would then satisfy condition
\hyperref[item:c1]{(C1)}. For example, we could choose an equidistant
grid with $\log(n)$ grid points, then using the notation of
Theorems~\ref{thm:consistent_test_bonferroni}
and~\ref{thm:consistent_test} it holds that
$\abs{\mathcal{E}_n}=\landauO(\log(n)^2)$ and $r_n=c\cdot n/\log(n)$,
for some $c>0$. Hence, given that condition \hyperref[item:c3]{(C3,k)}
is satisfied for all $k\in\N$ (this is the case for Gaussian noise),
we detect shifts in either the variance or the regression coefficients
with a rate of $\landauo((\log(n)/n)^{1/2})$ for the decoupled
test. Whereas for the combined test the detection rate for shifts in
the regression coefficients would only be
$\landauo((\log(n)/n)^{1/4})$ and for shifts in variance it would be
$\landauo((\log(n)/n)^{1/2})$.

\subsection{Computational complexity}

In order to analyze the computational complexity of the procedure it
is helpful to distinguish between the complexity of performing an
invariance test for a single set $S$ and the complete procedure, which iterates
over all such potential invariant sets.

The complexity of a single test based scaled residual resampling
introduced in Section~\ref{sec:diff_teststat} requires one step of
ordinary least squares to compute the residuals and $B$ evaluations of
the test statistic to approximate the null
distribution. The computational cost of one evaluation of the
various test statistics is given in Table~\ref{table:complexity}.
\begin{table}[h!]
  \spacingset{1}
  \centering
  \begin{tabular}{lccc}
    \toprule
    \textbf{test statistic}
    & $T^{*,\mathcal{F}}_i$ with $i\in\{1,2,3\}$
    & $T^{*,\mathcal{F}}_i$ with $i\in\{1,2,3\}$
    & $T^{\operatorname{HSIC}}$ \\
    \midrule
    \textbf{complexity}
    & $\landauO\left(\abs{\mathcal{F}}\cdot\abs{S}^2\cdot(n+\abs{S})\right)$  
    & $\landauO\left(\abs{\mathcal{F}}\cdot n\right)$
    & $\landauO\left(n^2\right)$ \\
    \bottomrule
  \end{tabular}
  \caption{Computational cost of a single evaluation of each test
    statistic. The symbol $*$ either stands for $\operatorname{max}$ or $\operatorname{sum}$.}
  \label{table:complexity}
  \spacingset{\spacevar}
\end{table}
For the comparison sets from the previous sections we have
$\abs{\mathcal{F}^1(\mathcal{E})}=\landauO(\abs{\mathcal{E}^2})$ and
$\abs{\mathcal{F}^2(\mathcal{E})}=\landauO(\abs{\mathcal{E}})$, which
implies that if we choose $\mathcal{E}$ to contain of order $\log(n)$
sets the complete complexities of our change point based tests
$T^{*,\mathcal{F}}_i$ are $\landauO(B\cdot n\log(n))$ in the low dimensional
setting.

Depending on the number of potential predictor variables an exhaustive
search over all subsets can quickly become unfeasible. For such
settings we would suggest reducing the number of potential predictor
variables by using an appropriate pre-selection, for example the Lasso
as also described in \citet{peters2016}. Additionally, there is often
no need to compute all subsets due to the fact that the intersection
in \eqref{eq:Shat2} is computed. For details see \citet{peters2016}.

\section{Instantaneous causal effects in multivariate time series}\label{sec:timeseries}
We have mentioned in Section~\ref{sec:icp} that the structural invariance
defined in Definition~\ref{def:invariantset} does not restrict 
the dependence structure between the predictor variables. This implies that dependence structures as Model~A in Figure~\ref{fig:time_dependence} is
included in our framework. Our
framework can be adapted to also include time dependencies as the ones in Model~B and Model~C in
Figure~\ref{fig:time_dependence}. In this section, we show that this
is possible whenever the dependence of $Y$ on $X$ and the past of
$(Y,X)$ is linear Gaussian and higher order Markovian.
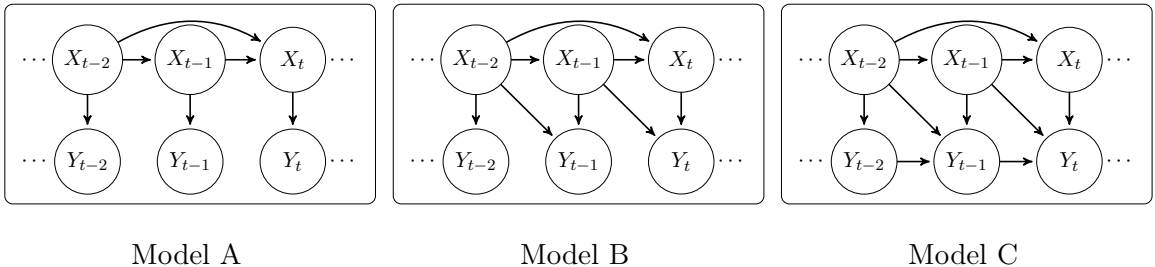
\begin{figure}[h!]
\spacingset{1}
\begin{minipage}{0.32\textwidth}
  \scalebox{0.75}{
  \begin{tikzpicture}[scale=3,framed,background rectangle/.style={draw=black,rounded corners}]
    \draw (-0.9,0.6) node {$\cdots$};
    \draw (-0.9,0) node {$\cdots$};
    \draw (0.9,0.6) node {$\cdots$};
    \draw (0.9,0) node {$\cdots$};
    \tikzstyle{VertexStyle} = [shape = circle, minimum width = 3em,draw]
    \SetGraphUnit{2}
    \Vertex[Math,L=X_{t-2},x=-0.6,y=0.6]{X1}
    \Vertex[Math,L=X_{t-1},x=0,y=0.6]{X2}
    \Vertex[Math,L=X_t,x=0.6,y=0.6]{X3}
    \Vertex[Math,L=Y_{t-2},x=-0.6,y=0]{Y1}
    \Vertex[Math,L=Y_{t-1},x=0,y=0]{Y2}
    \Vertex[Math,L=Y_t,x=0.6,y=0]{Y3}
    \tikzstyle{EdgeStyle} = [->,>=stealth',shorten > = 2pt]
    \Edge(X1)(X2)
    \Edge(X2)(X3)
    \Edge(X1)(Y1)
    \Edge(X2)(Y2)
    \Edge(X3)(Y3)
    \tikzset{EdgeStyle/.append style = {->, bend left}}
    \Edge(X1)(X3)
  \end{tikzpicture}}
\center{Model A}
\end{minipage}
\begin{minipage}{0.32\textwidth}
  \scalebox{0.75}{
  \begin{tikzpicture}[scale=3,framed,background rectangle/.style={draw=black,rounded corners}]
    \draw (-0.9,0.6) node {$\cdots$};
    \draw (-0.9,0) node {$\cdots$};
    \draw (0.9,0.6) node {$\cdots$};
    \draw (0.9,0) node {$\cdots$};
    \tikzstyle{VertexStyle} = [shape = circle, minimum width = 3em,draw]
    \SetGraphUnit{2}
    \Vertex[Math,L=X_{t-2},x=-0.6,y=0.6]{X1}
    \Vertex[Math,L=X_{t-1},x=0,y=0.6]{X2}
    \Vertex[Math,L=X_t,x=0.6,y=0.6]{X3}
    \Vertex[Math,L=Y_{t-2},x=-0.6,y=0]{Y1}
    \Vertex[Math,L=Y_{t-1},x=0,y=0]{Y2}
    \Vertex[Math,L=Y_t,x=0.6,y=0]{Y3}
    \tikzstyle{EdgeStyle} = [->,>=stealth',shorten > = 2pt]
    \Edge(X1)(X2)
    \Edge(X2)(X3)
    \Edge(X1)(Y1)
    \Edge(X2)(Y2)
    \Edge(X3)(Y3)
    \Edge(X1)(Y2)
    \Edge(X2)(Y3)
    \tikzset{EdgeStyle/.append style = {->, bend left}}
    \Edge(X1)(X3)
  \end{tikzpicture}}
\center{Model B}
\end{minipage}
\begin{minipage}{0.32\textwidth}
  \scalebox{0.75}{
   \begin{tikzpicture}[scale=3,framed,background rectangle/.style={draw=black,rounded corners}]
     \draw (-0.9,0.6) node {$\cdots$};
     \draw (-0.9,0) node {$\cdots$};
     \draw (0.9,0.6) node {$\cdots$};
     \draw (0.9,0) node {$\cdots$};
     \tikzstyle{VertexStyle} = [shape = circle, minimum width = 3em,draw]
     \SetGraphUnit{2}
     \Vertex[Math,L=X_{t-2},x=-0.6,y=0.6]{X1}
     \Vertex[Math,L=X_{t-1},x=0,y=0.6]{X2}
     \Vertex[Math,L=X_t,x=0.6,y=0.6]{X3}
     \Vertex[Math,L=Y_{t-2},x=-0.6,y=0]{Y1}
     \Vertex[Math,L=Y_{t-1},x=0,y=0]{Y2}
     \Vertex[Math,L=Y_t,x=0.6,y=0]{Y3}
     \tikzstyle{EdgeStyle} = [->,>=stealth',shorten > = 2pt]
     \Edge(X1)(X2)
     \Edge(X2)(X3)
     \Edge(Y1)(Y2)
     \Edge(Y2)(Y3)
     \Edge(X1)(Y1)
     \Edge(X2)(Y2)
     \Edge(X3)(Y3)
     \Edge(X1)(Y2)
     \Edge(X2)(Y3)
     \tikzset{EdgeStyle/.append style = {->, bend left}}
     \Edge(X1)(X3)
   \end{tikzpicture}}
\center{Model C}
\end{minipage}
\caption{Illustration of three potential time dependence
  structures. Model A can be directly applied to our framework
  described in Section~\ref{sec:icp}, Model B and C require a slight
  modification which accounts for the dependence of $Y_t$ on its
  past.}
\label{fig:time_dependence}
\spacingset{\spacevar}
\end{figure}

Consider a sequence $(Y_t,X_t)_{t\in\{1,\dots,n\}}$ as
described at the beginning of Section~\ref{sec:icp} for which 
there
exists $p\in\{0,\dots,n-2\}$, $S^*\subseteq\{1,\dots,d\}$,
$\beta=(\beta_1,\dots,\beta_{\abs{S^*}})^{\top}\in(\R\setminus\{0\})^{\abs{S^*}\times
  1}$ and $B_k\in\R^{(d+1)\times 1}$ for $k\in\{1,\dots,p\}$, satisfying for all
$t\in\{p+1,\dots,n\}$ that
\begin{equation}
  \label{eq:armodel}
  Y_t=X_t^{S^*}\beta+\sum_{k=1}^p(Y_{t-k},X_{t-k})B_k+\epsilon_t,
\end{equation}
where $\epsilon_{p+1},\dots,\epsilon_n\iid\mathcal{N}(0,\sigma^2)$ are
independent noise variables. Such a condition is for example satisfied
if $(Y_t,X_t)$ is a structural vectorized auto-regressive process
(SVAR) \citep[see e.g.,][Chapter 9]{lutkepohl2005}. However, this
framework also allows for more complicated (e.g., non-linear)
structures between the predictor variables. We can adapt our
methodology from the previous sections to estimate the set of
\emph{instantaneous} effects $S^*$ for which the model
\eqref{eq:armodel} remains invariant. The central idea is to include
the past $p$ lags of all variables into each regression step. To make
this more precise, for each potential set of instantaneous effects
$S\subseteq\{1,\dots,d\}$ we do not regress $Y_t$ 
on $X_t^S$, as in the previous sections,
but on
\begin{equation*}
  Z_t^{S,p}\coloneqq
  (X_t^S,Y_{t-1},X_{t-1},\dots,Y_{t-p},X_{t-p})
\end{equation*}
instead. We denote
the corresponding scaled residuals by
\begin{equation}
  \label{eq:scaled_residZ}
  \sresid^{S,p}
  \coloneqq {(\vI-\vP^{S,p}_{\vZ})\vY}\,/\,{\norm{(\vI-\vP^{S,p}_{\vZ})\vY}_2},
\end{equation}
where $\vP^{S,p}_{\vZ}$ is the projection operator onto the linear
span of $\vZ^{S,p} = (Z_{p+1}^{S,p},\ldots ,Z_n^{S,p})$ and with a
slight abuse of notation $\vY=(Y_{p+1},\dots,Y_n)$.  Equivalently to
\eqref{eq:HOS_vec}, we consider the following null hypothesis
expressed in terms of $Z_t^{S,p}$.
\begin{equation*}
  \tildeHOS:
  \begin{cases}
    \exists\eta\in\R^{(\abs{S}+(d+1)p)},\sigma\in(0,\infty)\text{ such
    that }\forall t\in\{p+1,\dots,n\}:\\
    Y_t=Z_t^{S,p}\eta+\epsilon_t\text{ with }\epsilon_t \independent
    Z_t^{S,p}\text{ and }\epsilon_{p+1},\dots,\epsilon_n\iid \mathcal{N}(0,\sigma^2).
  \end{cases}
\end{equation*}
Then, the same reasoning as in Section~\ref{sec:icp} can be applied,
given that the model in \eqref{eq:armodel} remains invariant across
time. The corresponding result is as follows.
\begin{proposition}[level of the scaled residual test including time lags]
  \label{thm:srtest_level2}
  For all measurable functions $T:\R^{n-p}\rightarrow\R$ based on the
  scaled residuals $\sresid^{S,p}$ in \eqref{eq:scaled_residZ}, and
  for all $(\vY,\vX)\sim P\in\tildeHOS$, it
  holds that the hypothesis test $\varphi^{S,p}_{T,B}$ defined as in
  \eqref{eq:hyptest} (where instead of regressing $Y$ on $X^S$ we
  regress $Y$ on $Z^{S,p}$) achieves correct level as $B$ goes to
  infinity, i.e.,
  \begin{equation*}
    \lim_{B\rightarrow\infty}\P_{P}\left(\varphi^{S,p}_{T,B}(\vY,\vX)=1\right)=\alpha.
  \end{equation*}
\end{proposition}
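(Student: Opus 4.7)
The plan is to mirror the argument that establishes Proposition~\ref{thm:srtest_level} (given in Appendix~\ref{sec:tslevelproof}), with $\vZ^{S,p}$ taking the role of $\vX^S$ throughout. I would proceed in three main steps.

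First, rewrite the scaled residual purely in terms of the noise. Under $\tildeHOS$ we have $\vY = \vZ^{S,p}\eta + \noise$ with $\noise = (\epsilon_{p+1},\dots,\epsilon_n) \sim \mathcal{N}(\mathbf{0},\sigma^2\vI)$ and $\epsilon_t \independent Z_t^{S,p}$. Using $(\vI-\vP^{S,p}_\vZ)\vZ^{S,p} = 0$ and then normalising,
$$
\sresid^{S,p} \;=\; \frac{(\vI - \vP^{S,p}_\vZ)\noise}{\|(\vI - \vP^{S,p}_\vZ)\noise\|_2} \;=\; \frac{(\vI - \vP^{S,p}_\vZ)\snoise}{\|(\vI - \vP^{S,p}_\vZ)\snoise\|_2},
$$
where $\snoise = \noise/\|\noise\|_2$ and the second equality is scale invariance. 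This is the time-lag analog of \eqref{eq:scaled_resid}.

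Second, establish the key distributional identity: conditional on $\vZ^{S,p}=\vz$, the scaled residual $\sresid^{S,p}$ has the same law as $\sresid^{S,p,\vz}_1 = (\vI-\vP^{S,p}_\vz)\snoise_1/\|(\vI-\vP^{S,p}_\vz)\snoise_1\|_2$ with $\snoise_1 \sim \mathcal{N}(\mathbf{0},\vI)$, i.e., the distribution that the adapted cutoff $c_{T,B}(\vZ^{S,p})$ resamples from. Then combine with a standard quantile-convergence argument: for each $\vz$, the cutoff $c_{T,B}(\vz)$ is the $\lceil B(1-\alpha) \rceil$-th order statistic of $B$ iid copies of $|T(\sresid_b^{S,p,\vz})|$, so by Glivenko--Cantelli it converges almost surely to the true $(1-\alpha)$-quantile $c_T(\vz)$ as $B\to\infty$. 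Dominated convergence and the tower property then give
$$
\lim_{B\to\infty}\P_P\!\left(\varphi^{S,p}_{T,B}(\vY,\vX)=1\right) \;=\; \E\!\left[\P\!\left(|T(\sresid^{S,p})| > c_T(\vZ^{S,p}) \,\Big|\, \vZ^{S,p}\right)\right] \;=\; \E[\alpha] \;=\; \alpha.
$$

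The main obstacle is the conditional distribution step. Because $\vZ^{S,p}$ contains the lagged responses $Y_{t-1},\dots,Y_{t-p}$, which in turn depend on past noise components, $\noise$ is not unconditionally independent of $\vZ^{S,p}$, so one cannot verbatim reuse the no-lag argument that conditions on $\vX^S$ and appeals to spherical symmetry of the Gaussian. The proof must instead exploit the precise linear Gaussian structure of $\tildeHOS$ together with the pointwise orthogonality $\epsilon_t \independent Z_t^{S,p}$ to identify the conditional law of $(\vI - \vP^{S,p}_\vZ)\snoise/\|(\vI - \vP^{S,p}_\vZ)\snoise\|_2$ with the uniform law on the unit sphere in the orthogonal complement of the column span of $\vZ^{S,p}$. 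This is the natural time-series counterpart of the conditioning argument that underlies Proposition~\ref{thm:srtest_level}.
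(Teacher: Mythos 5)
Your proposal follows the same route as the paper's proof in Appendix~\ref{sec:tslevelproof}: rewrite $\sresid^{S,p}$ as the normalised projection of the (scaled) noise using $(\vI-\vP^{S,p}_{\vZ})\vZ^{S,p}\eta=0$ and scale invariance, define the cutoff $c_{T,B}(\vz)$ as an empirical quantile of resampled copies $\sresid^{S,p,\vz}_b$, invoke almost-sure convergence of empirical quantiles to population quantiles, and finish with dominated convergence and the tower property conditional on $\vZ$. Your first two steps are exactly the paper's steps.

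The only difference lies in how the conditional-distribution identity is handled. The paper asserts that the resampled $\sresid_i^{S,p,\vz}$ are i.i.d.\ copies of $\sresid^{S,p}\mid\vZ=\vz$ and justifies this in a single sentence (``use the properties of the projection matrix $\vP^{S,p}_{\vZ}$ and the fact that $\vY=\vZ^{S,p}\eta+\noise$''). You correctly single this out as the one non-routine step, and your reason is the right one: since $\epsilon_t$ enters $Y_t$, which reappears as a component of $Z^{S,p}_{t+1},\dots,Z^{S,p}_{t+p}$, the noise vector $\noise$ is not independent of the full design $\vZ$, so conditioning on $\vZ=\vz$ does not leave $\snoise$ spherically symmetric and the no-lag argument cannot be copied verbatim. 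Your proposal stops at naming this obstacle rather than resolving it, so as written the key identity is not proved; but the paper's proof supplies nothing beyond the bare assertion at this point either, so there is no additional idea in the paper that your outline fails to reproduce. To actually close the step one would have to show that the law of $(\vI-\vP^{S,p}_{\vz})\snoise/\norm{(\vI-\vP^{S,p}_{\vz})\snoise}_2$ given $\vZ=\vz$ is the uniform law on the unit sphere of the orthogonal complement of the column span of $\vz$, and this requires more than the pointwise independence $\epsilon_t\independent Z_t^{S,p}$ stated in $\tildeHOS$ — it is exactly the non-exogeneity issue familiar from exact finite-sample inference in autoregressions.
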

A proof is given in Appendix~\ref{sec:tslevelproof}. In practical
applications, we usually do not know the number of time lags
$p$. Essentially, there are three ways of dealing with this
issue. Firstly, one can include a sufficiently large number of lags
$p$ which accounts for enough of the existing time dependence. Since
we then need to estimate more parameters this will, however, typically
decrease the power of our invariance procedure. A second option would
be to apply variable selection such as AIC or BIC.  As we aim at
finding invariant models rather than models that predict well, one may
also base the variable selection on a criterion that optimizes this
goal. For example, we could go over all reasonable lags and then
select the $p$ which results in the largest causal set, i.e.,
$p=\argmax_{k}\abs{\hat{S}(k)}$ where $\hat{S}(k)$ is our estimator
resulting from using $k$ lags.\footnote{The nature of this idea is
  similar to the one proposed in \citet{Mooij2009}, in which the
  authors evaluate the goodness of a regression function by the
  independence between residuals and predictors rather than the
  residual variance. The independence of residuals is afterwards
  used for causal structure learning.}
As with any variable selection
procedure we need to be careful when interpreting the
confidence statements due to post-selection issues. A third option
which circumvents any post-selection issues is to use the set
$\hat{S}=\cup_{k\in L}\hat{S}(k)$, for some set of potential lags
$L\subset\N$ and then adjust the level using a Bonferroni adjustment
of size $\abs{L}$ to account for multiple testing.

Proposition~\ref{thm:srtest_level2} establishes a framework for dealing
with instantaneous causal effects. This itself allows going beyond the
concept of Granger causality which excludes instantaneous
effects. Furthermore, the power of invariant causal prediction using a
test as in Proposition~\ref{thm:srtest_level2} hinges on the amount of
non-stationarity present in the multivariate time series to detect deviations
from the null-hypothesis $\tildeHOS$; that is,
non-stationarity, which loosely relates to perturbations, is
potentially beneficial for inferring causal time-instantaneous
structures. Section~\ref{sec:expshocks} illustrates this empirically.

\section{Numerical experiments} \label{sec:experiments} 

We apply our methodology on both artificial data sets (based on SCMs) and real data.  In
Section~\ref{sec:num_sim}, we summarize the findings from the
numerical simulations and in Section~\ref{sec:expreal} we apply our
method to a real world monetary example.

\subsection{Numerical simulations}\label{sec:num_sim}

We empirically verify the theoretical results we have developed in
Section~\ref{sec:consistency}. In particular, we show that detecting
the difference in regression coefficients and residual variance using
the combined test based on \eqref{eq:teststat_single} and the
decoupled test based on \eqref{eq:teststat_beta} and
\eqref{eq:teststat_sigma} yield different convergence rates
(Appendix~\ref{sec:exptheo}).  In Appendix~\ref{sec:exppower}, we
compare the power of different choices of the test statistic, e.g.,
when combining the different environments using a sum or a maximum,
see~\eqref{eq:teststat_multi1}. %
One further loses only little power when the true underlying
environments are unknown compared to the traditional approach that
exploits the precise location of the change points.  In fact, in some
situations and for large sample sizes, it is beneficial to split the
true environments into smaller sets, see
Appendix~\ref{sec:expsplitting}.  Finally, in
Appendix~\ref{sec:expshocks}, we consider the time series setting
discussed in Section~\ref{sec:timeseries}. Due to the time dependence,
it is possible to infer the causal structure even if there is a shock
in the dynamical system at a single time instance (leading to an
environment of size one). Whereas for some practical applications, it
might be difficult to distinguish a shock from an outlier, we show
that in case of an outlier, our method remains conservative.

\subsection{Monetary policy example} \label{sec:expreal}

To illustrate the usefulness of our method for practical applications
we apply it to a real world data set related to the monetary policy of
the Swiss National Bank (SNB) (see Appendix~\ref{sec:dataset} for
details). Our data set consists of monthly data from January 1999 to
January 2017 based on the variables given in
Table~\ref{table:variables}. Our goal is to find the instantaneous
monthly causal predictors that affect the log returns of the Euro -
Swiss Franc exchange rate (variable $Y$). The predictors we selected
can be grouped into two categories. Variables $X^1$ to $X^6$ are all
related to the policies of the SNB whereas variables $X^7$ to $X^9$
describe the economic conditions in Switzerland and the 19 Euro zone
countries. As the SNB cannot directly set the exchange rate it is
reasonable to assume that any active influence on the exchange rate
either occurs through one of the SNB variables or due to changes in
the economic conditions. Since we expect a time dependence in the
target variable $Y$, we apply our method by including lagged variables
as described in Section~\ref{sec:timeseries}. After regressing the
target variable $Y$ on the past of $(Y,X)$, the mean as well as the
regression coefficients remain fairly stable. Hence, any of our tests
testing for shifts in either of these quantities are not able to
reject the empty set. In contrast, the residual variance is unstable
and tests testing for these changes are indeed able to reject that the
empty set is invariant. In this example, we therefore only apply tests
capable of detecting instabilities of the second moment.
\begin{table}[h]
  \spacingset{1}
  \small
  \centering
  \begin{tabularx}{\textwidth}{@{} l Y @{}}  
    \toprule
    & \textbf{description} \\
    \midrule
    $Y$   & log returns of end of month exchange rate Euro to Swiss
    Francs\\
    $X^1$ & change in average call money rate (no log transform as part of the values are negative)\\
    $X^2$ & log returns of end of month proportion of foreign currency investments from total
            assets on the balance sheet of the SNB\\
    $X^3$ & log returns of end of month proportion of reserve positions at
            International Monetary Fund (IMF) from total assets on the balance
            sheet of the SNB\\
    $X^4$ & log returns of end of month proportion of monetary assistance loans from
            total assets on the balance sheet of the SNB\\
    $X^5$ & log returns of end of month proportion of Swiss Franc securities
            from total assets on the balance sheet of the SNB\\
    $X^6$ & log returns of end of month proportion of remaining assets from total assets on the
            balance sheet of the SNB\\
    $X^7$ & log returns of Swiss GDP (in Euro) resulting from
            interpolation of quarterly (seasonally adjusted) data and adjusted using the
            monthly average exchange rate\\
    $X^8$ & log returns of Euro zone GDP resulting from an
            interpolation of quarterly (seasonally adjusted) GDP data\\
    $X^9$ & inflation rate for Switzerland computed from the
            monthly consumer price index (CPI)\\
    \bottomrule
  \end{tabularx}
  \caption{Description of each variable in the data set.}
  \label{table:variables}
  \spacingset{\spacevar}
\end{table}
In Figure~\ref{fig:difflags}, we plot the
$p$-values for different lags resulting from the block-wise variance
test, the block-wise decoupled test, the block-wise combined test, the
smoother based variance test and the HSIC based test, all of which are
introduced in Sections~\ref{sec:test_stat_cp}
and~\ref{sec:gradual_shifts}. For comparison purposes, we also apply
the following non-causal method: Fit a linear model including all
instantaneous effects as well as all lagged effects and compute the
$p$-values from the standard t-test.
\begin{figure}[h!]
  \spacingset{1}
  \centering
  \scalebox{1}{
  \input{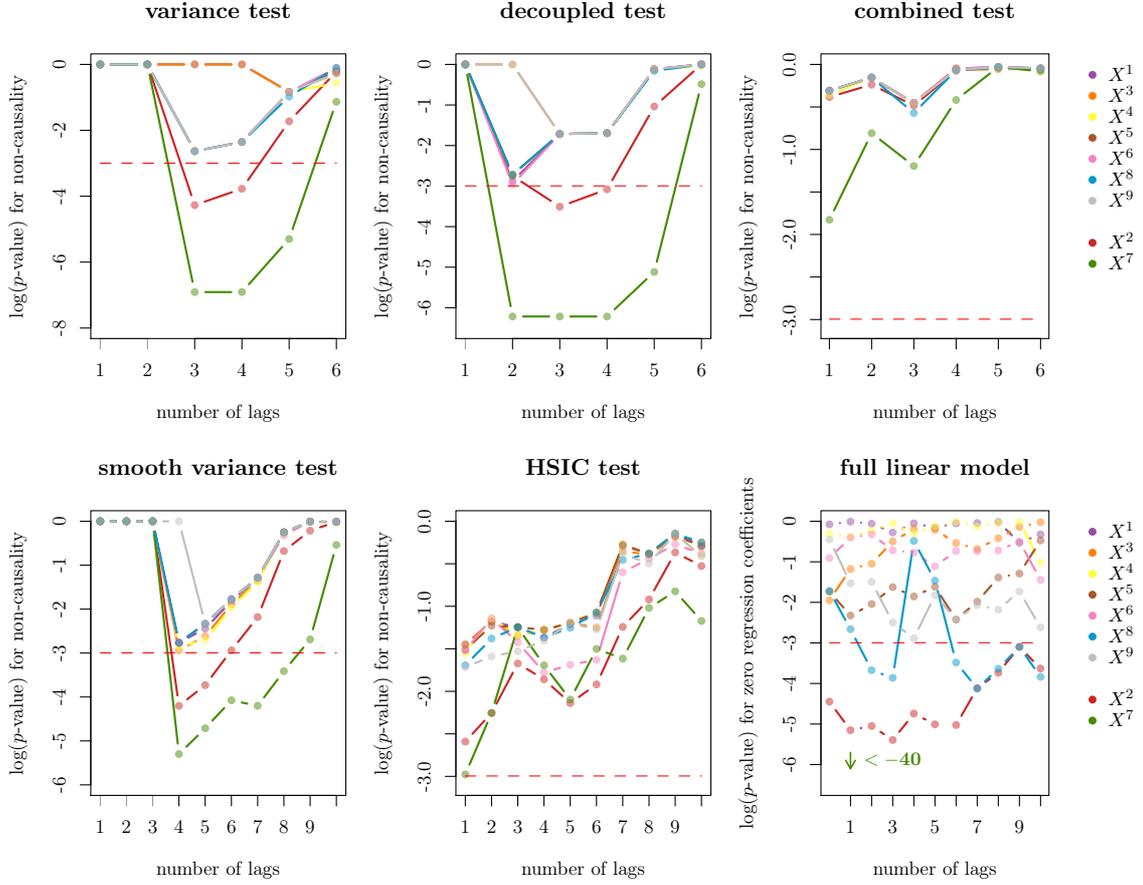}}
\caption{Monetary policy example. The block-wise variance test, the
  decoupled test and the smooth variance test are able to reject
  non-causality of the variables $X^2$ and $X^7$ at a $5\%$-level
  (dashed red line) for $4$ lags. In contrast, the combined test and
  the HSIC based test do not have sufficient power in this
  example. For all methods the ordering of the $p$-values among the
  variables seem robust with respect to the choice of lags $p$. The
  number of lags for which all sets are rejected are marked light gray
  on the $x$-axis. As the number of lags increases the $p$-values
  increase, which makes sense as the model begins to over-fit to the
  data. In contrast, the $p$-values for zero regression coefficients
  resulting from fitting the full linear model do not show this
  behavior.}
  \label{fig:difflags}
  \spacingset{\spacevar}
\end{figure}
The results show that the predictors $X^2$ and $X^7$ appear to be
causally significant for most methods, or at least they consistently
lead to the lowest $p$-values. From an economic viewpoint this also
makes sense: Variable $X^2$ represents the foreign currency
investments which are a known tool of the SNB to reduce the value of
the Swiss Franc. Variable $X^7$ is the Swiss GDP, an important
economic indicator; it seems plausible that this is a causal predictor
as well. Additionally, the plots show that the $p$-values tend to
increase when adding more lags. This happens because including too
many lags leads to models that heavily over-fit, in which case our
tests lose power. Moreover, the results show that both the combined
test and the HSIC based test have less power here. In particular, they
are not able to reject the model that includes only one lag.

This example is only an illustration of potential
applications to real world data sets. In practice, one might benefit from a 
more in-depth analysis and a careful a priori selection of the predictors that are included in the
model. In our example, one could argue that, instead of taking GDP, it
might be useful to use more specific indicators such as the purchasing
managers index (PMI) or other economic measures that might be more
directly linked to the exchange rate. However, due to the fact that we
obtain economically plausible results which post-hoc validate our
methodology at least to a certain extent, we do believe that the
example illustrates the potential of our approach for practical
applications.

\section{Summary}

We introduce a framework for inferring causal predictors of a target
variable $Y$ from sequentially ordered data. In contrast to classical
invariant prediction \citep{peters2016} we do not need the
knowledge of different environments. Nonetheless, we are able to
ensure exact type I error control (Propositions~\ref{thm:srtest_level}
and~\ref{thm:srtest_level2}).
Given that the data are generated by a change point model, we
additionally prove rates of consistency of our block-wise procedures
(Theorems~\ref{thm:consistent_test_bonferroni}
and~\ref{thm:consistent_test}); more precisely, they can detect any
violation of invariance with a rate which is essentially as fast as
$1/\sqrt{n}$. We furthermore show that our framework can be extended
to include linear time dependencies
(Section~\ref{sec:timeseries}). This opens the door to go beyond the
concept of Granger causality and also allows for instantaneous causal
effects. From this perspective, our methods make use of
non-stationarity (induced by interventions occurring throughout time)
in multivariate time series and use it to infer instantaneous causal
effects. The empirical performance of our methods are illustrated by
simulations. Notably, we verify the convergence rates empirically and
show that our methods have comparable power properties to classical
invariant causal prediction without requiring knowledge about
environments. In the case of time series data our methods are able to
detect causal directions even from a single shock intervention at a
specific point in time. Finally, we illustrate an application to a
real data set about the monetary policy of the Swiss National Bank.

\if0\blind
\section*{Acknowledgements}
The authors thank Nicolai Meinshausen and anonymous reviewers for
helpful discussions and constructive comments. NP was supported by a
research grant (200021\_153504) from the Swiss National Science
Foundation (SNSF) and JP was supported by a research grant (18968)
from VILLUM FONDEN. \fi

\bibliography{refs}

\if1\arxiv
\newpage
\renewcommand\appendixpagename{Supplementary material}
\renewcommand\appendixtocname{Supplementary material}
\appendix
\appendixpage
The supplementary material consists of the following five appendices.
\begin{enumerate}
\item[\textbf{A}] \hyperref[sec:num_sim_detailed]{\textbf{Detailed
      numerical simulations}}
\item[\textbf{B}] \hyperref[sec:support]{\textbf{Supporting material}}
\item[\textbf{C}] \hyperref[sec:proofs]{\textbf{Proofs}}
\item[\textbf{D}] \hyperref[sec:auxiliary]{\textbf{Auxiliary results}}
\item[\textbf{E}] \hyperref[sec:dataset]{\textbf{Monetary policy data set}}
\end{enumerate}

\if0\arxiv
\pagebreak
\fi
\section{Detailed numerical simulations}\label{sec:num_sim_detailed}

This section consists of a detailed presentation of the numerical
simulations. We begin in Section~\ref{sec:exptheo} with an experiment
to provide empirical evidence for the consistency results we have
developed in Section~\ref{sec:consistency}. Section~\ref{sec:exppower}
compares the power of different choices of the test statistic, e.g.\
when combining the different environments using a sum or a maximum,
see~\eqref{eq:teststat_multi1}.
Finally, Section~\ref{sec:expshocks}
shows an experiment for the time series setting discussed in
Section~\ref{sec:timeseries}.

\subsection{Comparison of combined and decoupled test statistic} \label{sec:exptheo}

In this section we empirically verify the convergence rates proved in
Theorem~\ref{thm:consistent_test_bonferroni} and
Theorem~\ref{thm:consistent_test}. For our simulations we
use various (even) sample sizes $n$ and simulate data from a linear
Gaussian model of the form
\begin{equation*}
  Y_i=\beta_i X_i + \epsilon_i, \quad\text{with }\epsilon_i\sim\mathcal{N}(0,\sigma_i^2).
\end{equation*}
To verify the convergence rates we consider alternatives with one
change point at $\frac{n}{2}$, leading to two environments
$e_n\coloneqq\{1,\dots,\frac{n}{2}\}$ and
$f_n\coloneqq\{\frac{n}{2}+1,\dots,n\}$ on which the data is
i.i.d. with fixed parameters $\beta_{e_n},\beta_{f_n},\sigma_{e_n}$
and $\sigma_{f_n}$. In this simulation we consider the three
alternatives specified in Table~\ref{table:threealt}.
\begin{table}[h!]
  \spacingset{1}
  \centering
  \begin{tabular}{lccc}
    \toprule
    & alternative 1 & alternative 2 & alternative 3\\
    \midrule
    $\abs{\beta_{e_n}-\beta_{f_n}}$ & $\frac{\log(n)}{20\cdot n^{\frac{1}{2}}}$ & $\frac{\log(n)}{20\cdot n^{\frac{1}{4}}}$ & $0$\\\addlinespace
    $\abs{\sigma_{e_n}^2-\sigma_{f_n}^2}$ & $0$ & $0$ & $\frac{\log(n)}{n^{\frac{1}{2}}}$\\
    \bottomrule
  \end{tabular}
  \caption{The three alternatives used in the simulations.}
  \label{table:threealt}
  \spacingset{\spacevar}
\end{table}
The resulting plots are given in Figure~\ref{fig:chowbon}. For the
alternatives 1 and 2 they show that the combined test based on
$T_{e_n,f_n}^3$ given in \eqref{eq:teststat_single} is only able to
detect changes in the regression coefficients with a rate of
$n^{-1/4}$, while the decoupled test based on $T_{e_n,f_n}^1$ and
$T_{e_n,f_n}^2$ given in \eqref{eq:teststat_beta} and
\eqref{eq:teststat_sigma} has a rate of $n^{-1/2}$. On the other hand
alternative~3 shows that both tests are able to detect changes in the
noise variance with a rate of $n^{-1/2}$. This corresponds with what
has been proved in Theorems~\ref{thm:consistent_test_bonferroni}
and~\ref{thm:consistent_test}. In particular, the
simulations illustrate that the decoupled test appears (at least in
these examples) to be more powerful even for finite sample sizes. This
indicates both from a theoretical and an empirical point of view that it
is preferable to use the decoupled test rather than the combined test.

\spacingset{1}
\begin{figure}[h!]
  \centering
  \input{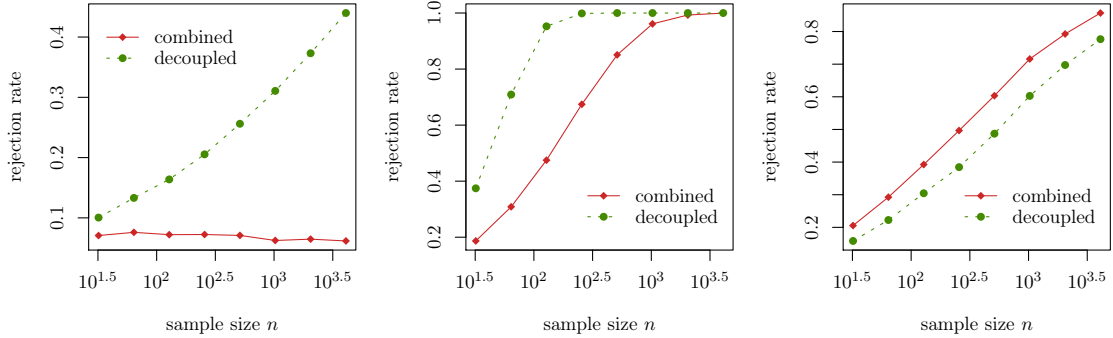}
  \caption{Power for alternatives, depending on $n$, from
    Table~\ref{table:threealt}. From left to right; alternative 1,
    alternative 2 and alternative 3. Comparing the plots for
    alternative 1 and 2 it can be seen that while the decoupled test
    is capable of detecting both alternatives as $n$ increases the
    combined test statistic is only able to reject alternative 2 implying
    a slower consistency rate for detecting changes in regression
    coefficients. The plot for alternative 3 indicates that both tests
    are able to detect shifts in variance with the faster rate of
    $n^{-\frac{1}{2}}$.}
  \label{fig:chowbon}
\end{figure}
\spacingset{\spacevar}

\subsection{Power comparison on simulated data} \label{sec:exppower}
We now apply our methods to simulated data.  As data generating
process we use the linear Gaussian model given in
Figure~\ref{fig:simDAG}.
\begin{figure}[h!]
  \spacingset{1}
  \hspace{0.1 \textwidth}
  \begin{minipage}{0.3\textwidth}
    \centering
    \begin{mdframed}[roundcorner=5pt,backgroundcolor=blue!10]
      \begin{align*}
        X^1&\leftarrow N^1\\
        X^2&\leftarrow \beta^1 X^1 + N^2\\
        Y\phantom{^1}&\leftarrow\beta^2 X^1 + \beta^3 X^2 + N^3\\
        X^3&\leftarrow\beta^4 Y + N^4\\
        \text{with}&\quad N^j\iid\mathcal{N}(\mu_j,\sigma^2_j)
      \end{align*}
    \end{mdframed}
  \end{minipage}
  \hfill
  \begin{minipage}{0.5\textwidth}
    \centering
    \begin{tikzpicture}[scale=1]
      \tikzstyle{VertexStyle} = [shape = circle, minimum width=3em, draw]
      \SetGraphUnit{2}
      \Vertex[Math,L=X^1,x=-2,y=-1]{X1}
      \Vertex[Math,L=X^2,x=-2,y=1]{X2}
      \Vertex[Math,L=Y,x=0,y=0]{Y}
      \Vertex[Math,L=X^3,x=2,y=0]{X3}
      \tikzstyle{EdgeStyle} = [->,>=stealth',shorten > = 2pt]
      \Edge(Y)(X3)
      \Edge(X1)(Y)
      \Edge(X2)(Y)
      \Edge(X1)(X2)
    \end{tikzpicture}
  \end{minipage}
  \caption{(left) structural causal model (SCM) of the observational
    setting with corresponding DAG (right)}
  \label{fig:simDAG}
  \spacingset{\spacevar} 
\end{figure}
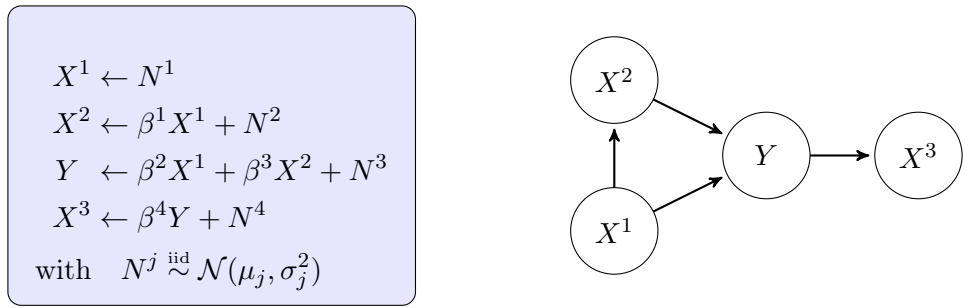
We perform our simulations for the sample sizes
$n\in\{100,200,300,400,500\}$ and for each sample size we generate
$1000$ data sets.  For each of these repetitions, we randomly draw
parameters of the structural causal model 
according to the following distributions
$\beta^j\iid\operatorname{Uniform}([0.5,1.5])$,
$\sigma_j^2\iid\operatorname{Uniform}([0.1,0.3])$ and
$\mu_j\iid\operatorname{Uniform}([0,0.3])$ that are used to
sample from the so-called observational distribution.  In order to
generate different environments, we randomly select two change points
$t_1$ and $t_2$ in $\{1,\dots,n\}$.  This yields the following three
environments.
\begin{compactitem}
\item $e_1=\{1,\dots,t_1\}$: Here, we sample from the observational distribution.
\item $e_2=\{t_1+1,\dots,t_2\}$: Here, we use the model as in the
  first environment but intervene on variable $X^2$, i.e., the
  structural assignment of $X^2$ is replaced by
  $X^2=\beta^1X^1+\tilde{N}^2$, where $\tilde{N}^2$ is a Gaussian
  random variable with mean sampled uniformly between $1$ and $1.5$
  and variance sampled uniformly between $1$ and $1.5$.
\item $e_3=\{t_2+1,\dots,n\}$: Again, we use the same model as in
  environment $e_1$ but this time, we intervene on $X^3$, i.e., the
  structural assignment of $X^3$ 
   is replaced by
  $X^3=\tilde{N}^3$, where $\tilde{N}^3$ is a Gaussian random variable
  with mean sampled uniformly between $-1$ and $-0.5$ and the same
  variance as the noise $N^3$ from the observational setting.
\end{compactitem}
\begin{figure}[h!]
  \spacingset{1}
  \centering
  \input{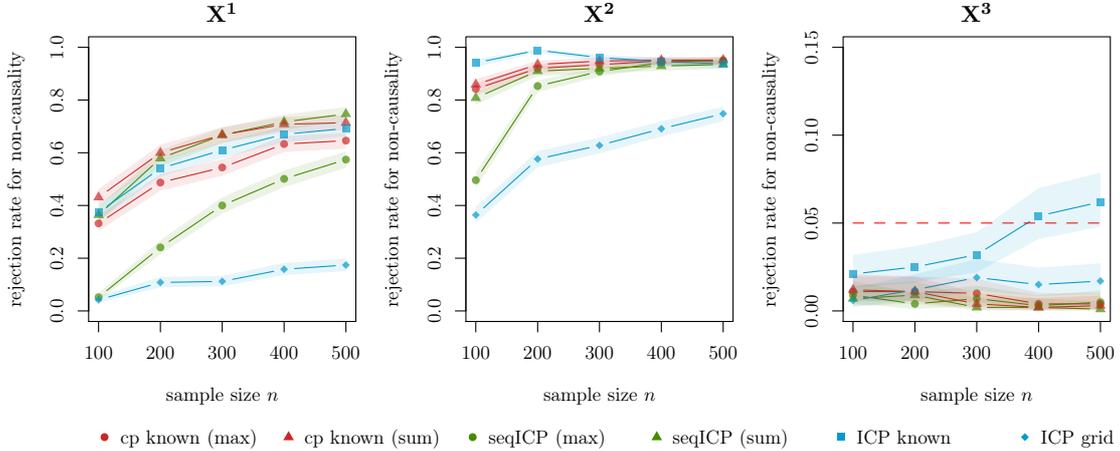}
  \caption{Model in Section~\ref{sec:exppower}. With increasing sample
    size, all methods tend to identify the causal variables $X^1$
    and $X^2$, even if the change points are unknown (green). For the
    sum test statistic (triangle), knowing the environments (red) does
    not improve performance. A possible explanation is that
    sometimes, it might be beneficial to split existing environments,
    see Section~\ref{sec:expsplitting}.}
  \label{fig:powersim1}
  \spacingset{\spacevar}
\end{figure}
The results are shown in Figure~\ref{fig:powersim1}. Here we compare
our method based on unknown change points (seqICP, green) with our
method based on known change points (cp known, red) and the original
version of ICP (ICP, blue) from \citet{peters2016}. Since $X^1$ and
$X^2$ are the true parents of $Y$ in the underlying model, we expect
the methods to reject these two variables (as being non-causal), at
least with increasing sample size.  The figure shows that the sum
(triangle) works slightly better than the maximum (circle),
see~\eqref{eq:teststat_multi1}. 
Furthermore, providing the method with
the true underlying change points and placing the grid points on those
(red) improves over a larger grid (consisting of $10$ grid points)
(green). The difference, however, is not very large and does not seem
significant for the sum estimators. This stability property of the sum
estimator might be due to a phenomenon we investigate in
Section~\ref{sec:expsplitting} below.  The maximum based test
statistic works slightly worse than the sum based statistic if the
sample size in the smallest segments of the grid is too small.  This
may be because the maximum is influenced heavily by the terms that
correspond to such smallest environments, which we expect to have a
large variance.  This effect is not as prominent for the sum, in which
one effectively averages many of such terms.  All the proposed methods
outperform the original version of ICP due to the slightly improved
test statistic.

\subsubsection{Increasing power by splitting environments}\label{sec:expsplitting}
In the example above, for each data set, there are two change points
that have been used in the data generating process.  That is, the data
are i.i.d.\ within each of the three environments $e_1$, $e_2$ and
$e_3$.  Suppose now that we are given the change points and assume
further that the third environment is large compared to the former
two.  We can then run our method using a grid that is placed on the
known change points.  The question arises whether one can benefit (in
terms of power) by splitting the third environment, i.e., by placing
another grid point after the second one.  Intuitively, this should not
be the case for the maximum based test statistic, which is focusing
on the largest difference of distributions between any two
environments that are constructed from the grid.  We observe
empirically, however, that it can be indeed the case for the sum based
test statistic.

As an experiment, we use the same simulation procedure as in
Section~\ref{sec:exppower} above but fix the sample size to be
$n = 200$ and fix the two change points at 15 und 30.  Placing the
grid on those two points yields identification of $X^1$ as a causal
variable in roughly $20\%$ of the repetitions, see
Figure~\ref{fig:powersim2}, red line.  Instead, we can also keep the
location of the first two grid points and split the largest
environment into smaller segments; this is done by introducing
additional grid points between 30 and 200.  Somewhat surprisingly,
this can yield a significant increase of power, see the green line in
Figure~\ref{fig:powersim2}.
\begin{figure}[h!]
  \spacingset{1}
  \centering
  \scalebox{0.7}{
  \input{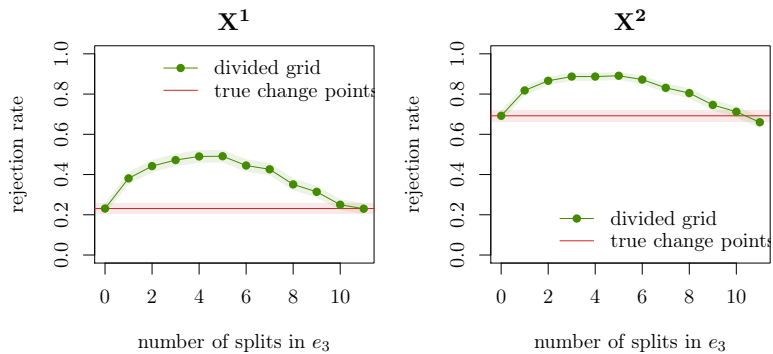}}
\caption{Here, the true change points are assumed to be known. Using
  the sum based estimator with a grid on these true change points
  (red) yields a certain discovery rate of the true causal
  parents. One can gain a significant increase of power, however, by
  splitting the largest environment into smaller segments (green).}
  \label{fig:powersim2}
  \spacingset{\spacevar}
\end{figure}
If one splits an existing segment, one obtains additional terms in the
sum of test statistic, see 
right-hand side of Equation~\eqref{eq:teststat_multi1}.
In the setting above, for example, after splitting environment $e_3$,
we now have tripled the number of terms that measure the difference
between environments $e_1$ and $e_3$.  For rather large environments,
in which the test statistic has relatively small variance even for the
smaller environments, this can be seen as putting more weight on the
corresponding term in the original sum.  This may then ultimately
yield an increase in power of the procedure.  At some point, this
effect levels off, of course.  After introducing too many additional
grid points, the variances of the individual test statistics are so
large that one cannot detect any difference in distributions between
the segments anymore. This is why the green line has to fall below the
red line with increasing number of splits.

\subsection{Shocks in time series} \label{sec:expshocks} 

In this section, we look at a time series example with three variables
$X$, $Y$, and $Z$ with a linear autoregressive structure (with one lag)
given by the DAG in Figure~\ref{fig:timeseries}. More precisely, we use
the following structural time series model
\begin{align*}
  X_t&\leftarrow 0.5X_{t-1}+0.1Y_{t-1}+0.1Z_{t-1}+\epsilon_t^X\\
  Y_t&\leftarrow 0.5X_t+0.1X_{t-1}+0.2Y_{t-1}+0.2Z_{t-1}+\epsilon_t^Y\\
  Z_t&\leftarrow 0.2X_t+0.2Y_t+0.4X_{t-1}+0.4Y_{t-1}+0.2Z_{t-1}+\epsilon_t^Z.
\end{align*}
The choice of parameters is such that all arrows in the DAG in
Figure~\ref{fig:timeseries} correspond to non-zero coefficients.
\spacingset{1}
\begin{figure}[h!]
  \hspace{0.1 \textwidth}
  \begin{minipage}{0.4\textwidth}
    \scalebox{0.75}{
      \input{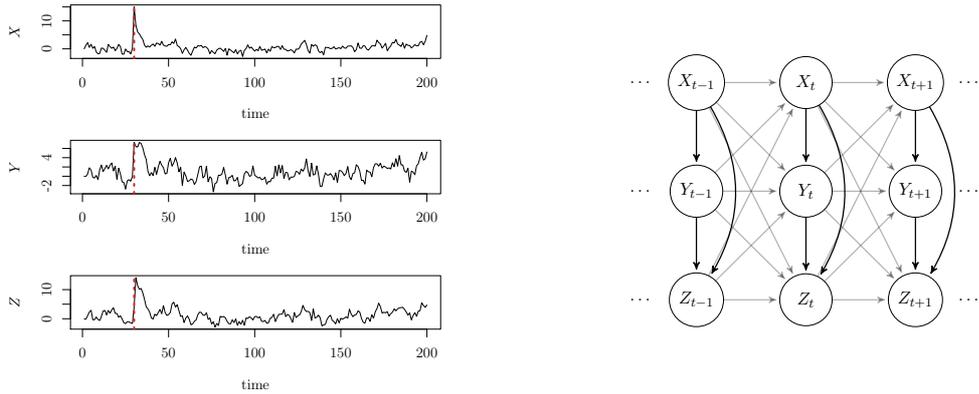}}
  \end{minipage}
  \hfill
  \begin{minipage}{0.55\textwidth}
    \centering
      \scalebox{0.6}{
        \begin{tikzpicture}[scale=1.2]
          \tikzstyle{VertexStyle} = [minimum width=3em, draw=none]
          \Vertex[Math, L=\dots, x=-3,y=2]{past1}
          \Vertex[Math, L=\dots,x=-3,y=0]{past2}
          \Vertex[Math, L=\dots,x=-3,y=-2]{past3}
          \Vertex[Math, L=\dots,x=3,y=2]{future1}
          \Vertex[Math, L=\dots,x=3,y=0]{future2}
          \Vertex[Math, L=\dots,x=3,y=-2]{future3}
          \SetGraphUnit{2}
          \tikzstyle{VertexStyle} = [shape = circle, minimum width=3em, draw]
          \Vertex[Math,L=X_{t-1},x=-2,y=2]{X1}
          \Vertex[Math,L=Y_{t-1},x=-2,y=0]{Y1}
          \Vertex[Math,L=Z_{t-1},x=-2,y=-2]{Z1}
          \Vertex[Math,L=X_{t},x=0,y=2]{X2}
          \Vertex[Math,L=Y_{t},x=0,y=0]{Y2}
          \Vertex[Math,L=Z_{t},x=0,y=-2]{Z2}
          \Vertex[Math,L=X_{t+1},x=2,y=2]{X3}
          \Vertex[Math,L=Y_{t+1},x=2,y=0]{Y3}
          \Vertex[Math,L=Z_{t+1},x=2,y=-2]{Z3}
          \tikzstyle{EdgeStyle} = [->,>=stealth',shorten > = 2pt, opacity=0.3]
          \Edge(X1)(X2)
          \Edge(Y1)(Y2)
          \Edge(Z1)(Z2)
          \Edge(X2)(X3)
          \Edge(Y2)(Y3)
          \Edge(Z2)(Z3)
          \Edge(X1)(Y2)
          \Edge(X1)(Z2)
          \Edge(X2)(Y3)
          \Edge(X2)(Z3)
          \Edge(Y1)(X2)
          \Edge(Y1)(Z2)
          \Edge(Y2)(X3)
          \Edge(Y2)(Z3)
          \Edge(Z1)(X2)
          \Edge(Z1)(Y2)
          \Edge(Z2)(X3)
          \Edge(Z2)(Y3)
          \tikzstyle{EdgeStyle} = [->,>=stealth',shorten > = 2pt]          
          \Edge(X1)(Y1)
          \Edge(X2)(Y2)
          \Edge(X3)(Y3)
          \Edge(Y1)(Z1)
          \Edge(Y2)(Z2)
          \Edge(Y3)(Z3)
          \tikzset{EdgeStyle/.append style = {->, bend left}}
          \Edge(X1)(Z1)
          \Edge(X2)(Z2)
          \Edge(X3)(Z3)
        \end{tikzpicture}}
    \end{minipage}
    \caption{(left) Example of a three variable vector autoregressive
      time series with a shock of size $15$ at time $t=30$ (right) DAG
      of generative time series model.}
    \label{fig:timeseries}
\end{figure}
\spacingset{\spacevar}
For our simulations we use $n=200$. We then draw the time point at
which we intervene uniformly from $\{1,\ldots,n\}$. Our intervention
consists of setting the structural assignment of $X$ at this time
point to the desired shock strength, i.e., the shock intervention
happens only at this one particular instance in time and the
structural assignment of $X$ is changed back to its original form in
the next time step. Due to the time and structural dependence the
shock propagates and spreads to the other variables. An example time
series with a shock intervention of size $15$ at the time point $t=30$
is illustrated in Figure~\ref{fig:timeseries}. In our simulations, we
resample this model $1000$ times for each shock strength in
$\{0,2,\ldots,30\}$ and apply our method using the decoupled test
based on $T_1^{\operatorname{sum},\mathcal{F}^2(\mathcal{E}^G)}$ and
$T_2^{\operatorname{sum},\mathcal{F}^2(\mathcal{E}^G)}$, where
$G=(20,40,\dots,180)$ and $\mathcal{F}^2$ is defined in
\eqref{eq:F2}. The results are illustrated in Figure~\ref{fig:shock}.
\begin{figure}[h!]
  \spacingset{1}
  \centering
  \scalebox{0.8}{
  \input{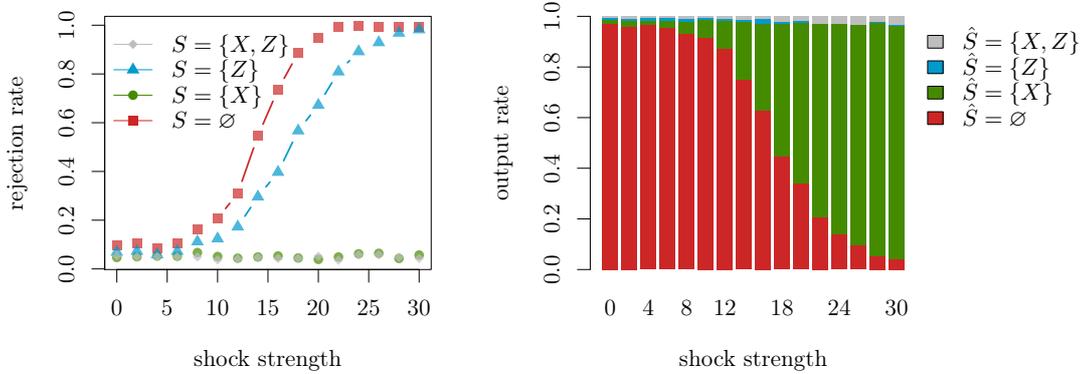}}
\caption{Model given in Section~\ref{sec:expshocks} with shock
  interventions. The left plot shows how the rejection rates of $\tildeHOS$
  for each possible set $S$ depends on the shock strength. For
  sufficiently large shock strengths our method is capable of
  rejecting the empty set (red) and the non-invariant set $\{Z\}$
  (blue). The right plot shows the outcome of our parent set estimator
  $\hat{S}$. The coverage property guarantees that the method returns
  $\{Z\}$ (blue) and $\{X,Z\}$ (gray) in at most $5\%$ of the
  cases.}
  \label{fig:shock}
  \spacingset{\spacevar}
\end{figure}
One might argue that in practical applications it might not be
possible to distinguish between shock interventions and outliers. We
therefore also analyze how our method behaves if instead of a shock
intervention we simply set one value of $Y$ as an outlier, i.e., we
sample the complete time series without any intervention and then set
the value of $Y$ at a random time point to a fixed value. The results
are illustrated in Figure~\ref{fig:outlier}. They show that 
with increasing outlier 
size
one obtains a model misspecification. Our method therefore stays
conservative and outputs the empty set. It does not give an
informative answer but does not output a mistake either.
\begin{figure}[h!]
  \spacingset{1}
  \centering
  \scalebox{0.8}{
  \input{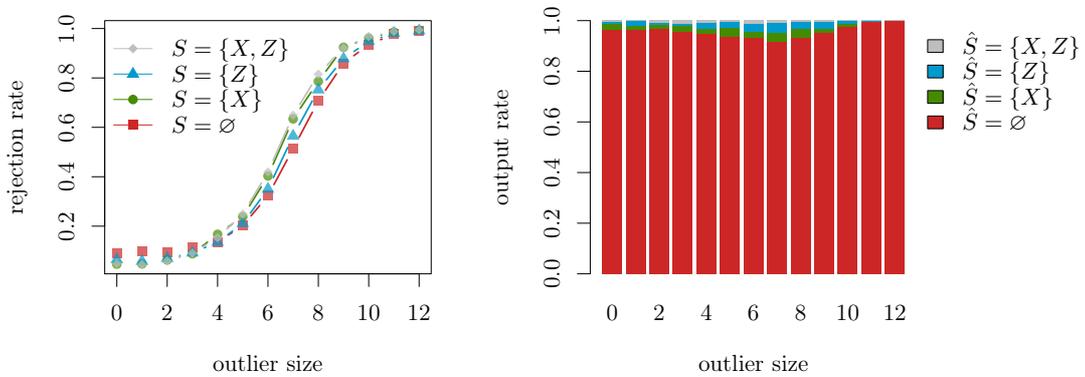}}
\caption{Model given in Section~\ref{sec:expshocks} with outliers. Given an
  outlier on $Y$ instead of a shock intervention the model is no
  longer invariant for any of the sets $S$. This can be seen in the
  left plot, where given that the outlier is large enough the null
  hypothesis $\tildeHOS$ is rejected for all sets. The right plot shows
  that even though the outlier is a model misspecification the
  estimator $\hat{S}$ remains conservative and generally outputs the
  empty set (red).}
  \label{fig:outlier}
  \spacingset{\spacevar}
\end{figure}

\pagebreak
\section{Supporting material}\label{sec:support}

\begin{remark}[violations of the linear Gaussian
  assumption]\label{rmk:nonGaussianNoise}
  The procedure described above relies on the assumption of a linear
  Gaussian model. An interesting extension for practical applications
  would be to allow
  \begin{compactitem}
  \item for non-linear settings, i.e., by replacing the linear dependence
    in Definition~\ref{def:invariantset}~(a) by
    $Y_t=f(X_t^S,\epsilon_t)$ where $f$ is in some general class of
    functions $\mathcal{F}$
  \item and for non-Gaussian noise settings, i.e., by allowing for an
    arbitrary noise distribution $G_{\epsilon}$ in
    Definition~\ref{def:invariantset}~(b).
  \end{compactitem}
  One option is to use a permutation approach as follows; First
  use a general regression procedure to estimate the function $f$ and
  compute residuals $R_1,\dots,R_n$, then in a second step approximate
  the null distribution of a test statistic $T(R_1,\dots,R_n)$ by
  permuting the time index of the residuals. Given, that our estimate
  of $f$ is very close to the true function the residuals should be
  approximately i.i.d. hence (approximately) justifying a permutation
  approach. While we believe this approach is interesting from a
  practical viewpoint, it is only a heuristic. Moreover, it turns out
  to be rather difficult to get precise results about the asymptotic
  level of such a testing procedure.
\end{remark}

\begin{remark}[Obtaining environments by clustering]\label{rem:clustering}
  It is tempting to use $(\vY,\vX)$ to construct environments. For
  example, one could use a clustering procedure on one of the
  variables $X^j$. In general, however, this can break the level
  guarantees of the test. To see this, assume we are given
  observations from the following Gaussian SCM
  \begin{align*}
    Y\leftarrow\epsilon_Y, \quad
    X\leftarrow\operatorname{sign}(Y)+\epsilon_X.
  \end{align*}
  Clearly, $Y$ has no parents implying that the empty set is
  invariant. However, constructing two environments by clustering on
  the sign of $X$ results in a changing distribution of $Y$ across the
  two environments, hence breaking the invariance. A similar counter
  example can be constructed by letting the noise of $Y$ be bi-modal,
  then the same problem occurs even if $X$ depends on $Y$
  linearly. The problem is that the clustering is based on the noise
  of $Y$. One way of avoiding this is to only cluster using the
  ancestors of $Y$. Such a method is proposed in \citet{Heinze2017}.
\end{remark}

\begin{remark}[Comparison to change point methods]\label{rem:changepoint}
  While our proposed method also covers the case of smoothly varying
  shifts we have analyzed its power in a change point model. This
  might lead to the question of how our method relates to two-stage
  procedures which first identify change points and then proceed to
  infer the causal structure based on these environments. Most
  importantly, the difference is that our method directly optimizes
  the (non-)invariance required to infer causal relations, while a
  two-step procedure first solves a change point detection problem,
  which is only indirectly linked to (non-)invariances. A scenario
  which illustrates this is given by a model consisting of very many
  changes for which only very few actually lead to non-invariant
  models. A two-stage procedure will necessarily run into power issues
  due to the many small environments, while our method will not be
  affected in the same way. A second major problem with the two-step
  procedure is that the change point procedure is only allowed to be
  applied to the target variable $Y$ and not to the predictors $X$, as
  one otherwise runs into the same problem discussed in
  Remark~\ref{rem:clustering}. This, however, might lead to a loss in
  power, as the following (rather) artificial example given in
  Figure~\ref{fig:comparison_changepoints} illustrates.
  \begin{figure}[h!]
    \spacingset{1}
    \centering
    \input{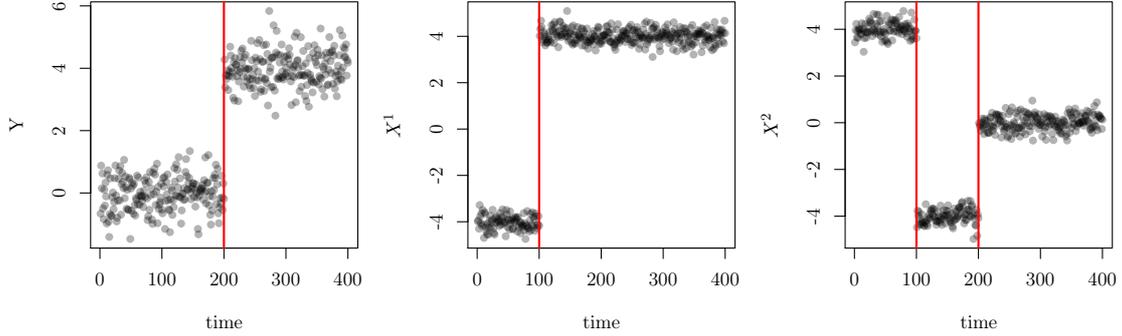}
    \caption{Scatter plots of the variables $(Y, X^1, X^2)$ resulting
      from a linear Gaussian SCM consisting of the two edges
      $X^1\rightarrow Y$ and $X^2\rightarrow Y$. There are two
      interventions (change points) at time points $t_1=100$ and
      $t_2=100$. The first intervention is a mean shift on both $X^1$
      and $X^2$, while the second only shifts $X^2$. The causal
      structure is chosen in such a way that only the second
      intervention is visible in the distribution of $Y$.}
    \label{fig:comparison_changepoints}
    \spacingset{\spacevar}
  \end{figure}
  In this example not all changes are visible in the distribution of
  $Y$ alone. This, in particular, means that any two-stage procedure
  must fail, since there are at most two distinct environments
  ($e_1=\{1,\dots,200\}$ and $e_2=\{201,\dots,400\}$) that can be
  detected in the distribution of $Y$. Applying our procedure to this
  three point grid and using test statistic $T^1$ given in
  \eqref{eq:teststat_beta} (this is essentially the same as standard
  ICP with two environments and a different hypothesis test), we are
  not able to reject the set $\{X^1\}$ ($p$-value of $0.554$). In
  contrast, our procedure (based on a fine grid) is able to exploit
  the differences in distribution in the first half of the
  data. Hence, we are able to reject the set $\{X^1\}$ with a
  $p$-value of $0.039$.
\end{remark}

\spacingset{1}
\begin{algorithm}[H]
\SetAlgoLined
\SetKwInOut{Input}{input}
\SetKwInOut{Output}{output}
\SetKwInOut{Choose}{choose}
\Input{$(Y_1, X_1), \ldots, (Y_n, X_n)$}
\Choose{set of block-wise environments  $\mathcal{E}$ based on a grid;\newline
  comparison set $\mathcal{F}$;\newline
  pairwise test statistic $T_{e,f}$;}
 \For{$S\in\mathcal{P}(\{1,\dots,d\})$}{
   \begin{compactenum}
   \item compute the scaled residuals $\sresid$ by regressing $\vY$ on $\vX^S$
     and normalizing\;
   \item compute pairwise test statistic $T_{e,f}(\sresid)$ for all $(e,f)\in\mathcal{F}$\;
   \item compute combined test statistic $T(\sresid)=\max_{e,f}T_{e,f}(\sresid)$ or $T(\sresid)=\sum_{e,f}T_{e,f}(\sresid)$\;
   \item use simulated null distribution of $T(\sresid)$ to accept or
     reject $\HOS$\;
   \end{compactenum}
 }
 \Output{$\hat{S}=\bigcap_{S:\,\HOS\, \text{accepted}}S$}
 \caption{sequential invariant causal prediction (block-wise
   comparisons)}
 \label{alg:seqICP}
\end{algorithm}
\spacingset{\spacevar}

\pagebreak
\section{Proofs}\label{sec:proofs}

\subsection{Theorem~\ref{thm:consistent_test}}\label{sec:proof_consistent}

In this section we give a proof of
Theorem~\ref{thm:consistent_test}. The key step in the proof is
based on Proposition~\ref{thm:asymptotic_dist_h0_tot} and
Proposition~\ref{thm:asymptotic_dist_ha}. For notational convenience
we drop the set $S$ in the notation, throughout this entire section.

\begin{proof}[Theorem~\ref{thm:consistent_test}]
  The convergence of the Monte-Carlo approximation of the empirical
  distribution is well-established \citep[see e.g.][Example
  11.2.13]{lehmann2005}. It therefore holds $\P$-a.s. that,
  \begin{equation*}
    \lim_{B\rightarrow\infty}c_{T^{\max,\mathcal{F}^1(\mathcal{E}_n)}_3,B}=F_{T^{\max,\mathcal{F}^1(\mathcal{E}_n)}_3}^{-1}(1-\alpha),
  \end{equation*}
  where $c_{T^{\max,\mathcal{F}^1(\mathcal{E}_n)}_3,B}$ is defined in
  \eqref{eq:cutoff_fun}. Define,
  $\varphi^*_{T^{\max,\mathcal{F}^1(\mathcal{E}_n)}_3}\coloneqq\mathds{1}_{\{\abs{T^{\max,\mathcal{F}^1(\mathcal{E}_n)}_3}>F_{T^{\max,\mathcal{F}^1(\mathcal{E}_n)}_3}^{-1}(1-\alpha)\}}$,
  then by the dominated convergence theorem it holds that
  \begin{equation*}
    \lim_{B\rightarrow\infty}\P\left(\varphi_{T^{\max,\mathcal{F}^1(\mathcal{E}_n)}_3,B}(\vY_n,\vX_n)=1\right)=\P\left(\varphi^*_{T^{\max,\mathcal{F}^1(\mathcal{E}_n)}_3}(\vY_n,\vX_n)=1\right).
  \end{equation*}
  It therefore remains to prove that the right-hand side of the above
  equation converges to $1$. Recall, that for a real random variable
  $T$ and constants $a\in\R$ it holds for all $q\in\R$ that
  \begin{equation}
    \label{eq:prop_distfun}
    aF^{-1}_{T}(q)=F^{-1}_{aT}(q),
  \end{equation}
  where $F_{T}^{-1}(q)\coloneqq\inf\{t\in\R\,\rvert\, \P(T\leq t)\geq q\}$ is the generalized inverse
  distribution function.
  In order to simplify the notation we define
  \begin{equation*}
   T_n\coloneqq\sqrt{r_n}\abs{\mathcal{E}_n}^{-\tfrac{1}{k}}T^{\max,\mathcal{F}^1(\mathcal{E}_n)}_3. 
  \end{equation*}
  Assume that $(\vY_n,\vX_n)$ satisfies $\P^{(\vY_n,\vX_n)}\in\HO$,
  then Proposition~\ref{thm:asymptotic_dist_h0_tot} implies that for all
  $\epsilon>0$ there exists $M_{\epsilon}>0$ such that for all
  $n\in\N$ it holds that $\P\left(\abs{T_n}>M_{\epsilon}\right)<\epsilon$.
  This in particular implies that
  \begin{equation*}
    F_{T_n}^{-1}(1-\alpha)\leq M_{\alpha}.
  \end{equation*}
  Next, assume $(\vY_n,\vX_n)$ satisfies
  $\P^{(\vY_n,\vX_n)}\in\HA^n(a_n,b_n)$. Then, there exist
  $i,j\in\{1,\dots,L+1\}$ such that
  $a_n=\abs{\sigma_{e_i(\CPt^*_n)}^2-\sigma_{e_j(\CPt^*_n)}^2}$ and
  $b_n=\norm{\beta_{e_i(\CPt^*_n)}-\beta_{e_j(\CPt^*_n)}}_2$. Moreover,
  by \hyperref[item:c2]{(C2)} there exist sequences of
  environments $(f_n)_{n\in\N}$ and $(g_n)_{n\in\N}$ with
  $f_n,g_n\in\mathcal{E}_n$ such that for sufficiently large $n$ it
  holds that $f_n\subseteq e_i(\CPt^*_n)$ and
  $g_n\subseteq e_j(\CPt^*_n)$. Additionally, we have that
  $$\omega_n\coloneqq\frac{\abs{\mathcal{E}_n}^{\tfrac{1}{k}}}{\sqrt{r_n}}$$
  satisfies $\tfrac{1}{\sqrt{r_n}}=\landauO(\omega_n)$ and by
  assumption also that at least one of the following two conditions
  are satisfied, $\omega_n=\landauo(a_n)$ or
  $\omega_n=\landauo(b_n^2)$. Thus we can apply
  Proposition~\ref{thm:asymptotic_dist_ha} to get that
  \begin{align*}
    \lim_{n\rightarrow\infty}\P\left(\varphi^*_{T^{\max,\mathcal{F}^1(\mathcal{E}_n)}_3}(\vY_n,\vX_n)=1\right)
    &=\lim_{n\rightarrow\infty}\P\left(\abs{T^{\max,\mathcal{F}^1(\mathcal{E}_n)}_3}>F_{T^{\max,\mathcal{F}^1(\mathcal{E}_n)}_3}^{-1}(1-\alpha)\right)\\
    &\overset{\mathclap{\eqref{eq:prop_distfun}}}=\lim_{n\rightarrow\infty}\P\left(\abs{T_n}>F_{T_n}^{-1}(1-\alpha)\right)\\
    &\geq\lim_{n\rightarrow\infty}\P\left(\sqrt{r_n}\abs{\mathcal{E}_n}^{-\tfrac{1}{k}}\abs{T_{f_n,g_n}^3}>M_{\alpha}\right)=1,
  \end{align*}
  which completes the proof of Theorem~\ref{thm:consistent_test}.
\end{proof}

\subsubsection{Intermediate results}

\begin{lemma}[representation of $T_{e_1,e_2}^3$ for true change points]
  \label{thm:expansion_T}
  Let $e_1,e_2\in\mathcal{E}_n(\CPt_n^*)$ then it holds that
  \begin{align*}
    T_{e_1,e_2}^3(\sresid_n)&=\frac{\frac{1}{\abs{e_1}}\norm[\big]{\vepsilon_{e_1}+\vX_{e_1}(\beta_{e_1}-\hat{\beta}_{e_2})}_{2}^2}{\frac{1}{\abs{e_2}}\norm[\big]{\vepsilon_{e_2}+\vX_{e_2}(\beta_{e_2}-\hat{\beta}_{e_2})}_{2}^2}-1\\
    &=\frac{\frac{1}{\abs{e_1}}\left(\vepsilon_{e_1}^{\top}\vepsilon_{e_1}+2\vepsilon_{e_1}^{\top}\vX_{e_1}(\beta_{e_1}-\hbeta_{e_2})+(\beta_{e_1}-\hbeta_{e_2})^{\top}\vX_{e_1}^{\top}\vX_{e_1}(\beta_{e_1}-\hbeta_{e_2})\right)}{\frac{1}{\abs{e_2}}\left(\vepsilon_{e_2}^{\top}\vepsilon_{e_2}+2\vepsilon_{e_2}^{\top}\vX_{e_2}(\beta_{e_2}-\hbeta_{e_2})+(\beta_{e_2}-\hbeta_{e_2})^{\top}\vX_{e_2}^{\top}\vX_{e_2}(\beta_{e_2}-\hbeta_{e_2})\right)}-1
  \end{align*}
\end{lemma}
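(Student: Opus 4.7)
The plan is to unwind the definition of $T_{e_1,e_2}^3$ in three natural steps: (i) pass from the scaled residuals $\sresid$ to the unscaled pooled residuals $\resid_n=(\vI-\vP_{\vX_n})\vY_n$ and show the normalization $\norm{\resid_n}_2$ cancels out of the statistic; (ii) rewrite the per-environment regression on residuals as a per-environment regression on $\vY$; (iii) use the fact that $e_1,e_2\in\mathcal{E}_n(\CPt_n^*)$ are true environments to substitute in the model, then expand the square.

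First I would substitute $\sresid_h=\resid_{n,h}/\norm{\resid_n}_2$ into
\[
T_{e_1,e_2}^3(\sresid_n)=\frac{(\sresid_{e_1}-\vX_{e_1}\hgamma_{e_2})^{\top}(\sresid_{e_1}-\vX_{e_1}\hgamma_{e_2})}{\hat{s}^2_{e_2}\,\abs{e_1}}-1,
\]
with $\hat{s}^2_{e_2}$ written out as in its definition. Both numerator and denominator are then homogeneous of the same degree in $1/\norm{\resid_n}_2$, so this factor cancels and one obtains
\[
T_{e_1,e_2}^3=\frac{\tfrac{1}{\abs{e_1}}\bigl\lVert\resid_{n,e_1}-\vX_{e_1}\tilde{\beta}_{e_2}\bigr\rVert_{2}^{2}}{\tfrac{1}{\abs{e_2}}\bigl\lVert\resid_{n,e_2}-\vX_{e_2}\tilde{\beta}_{e_2}\bigr\rVert_{2}^{2}}-1,
\]
where $\tilde{\beta}_{e_2}\coloneqq(\vX_{e_2}^{\top}\vX_{e_2})^{-1}\vX_{e_2}^{\top}\resid_{n,e_2}$ is the OLS coefficient obtained from regressing the pooled residuals on $\vX$ restricted to $e_2$.

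Second, the key algebraic step, I would observe that $\resid_n=\vY_n-\vX_n\hbeta_{\mathrm{pool}}$ for the pooled OLS coefficient $\hbeta_{\mathrm{pool}}$, so $\tilde{\beta}_{e_2}=\hbeta_{e_2}-\hbeta_{\mathrm{pool}}$, where $\hbeta_{e_2}$ is the OLS coefficient from regressing $\vY_{e_2}$ on $\vX_{e_2}$ directly. A short substitution then gives
\[
\resid_{n,e_k}-\vX_{e_k}\tilde{\beta}_{e_2}=\vY_{e_k}-\vX_{e_k}\hbeta_{\mathrm{pool}}-\vX_{e_k}(\hbeta_{e_2}-\hbeta_{\mathrm{pool}})=\vY_{e_k}-\vX_{e_k}\hbeta_{e_2},
\]
so the pooled coefficient drops out identically and the statistic can be expressed purely in terms of $\vY$ and the environment-specific OLS coefficient $\hbeta_{e_2}$.

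Third, because $e_1,e_2\in\mathcal{E}_n(\CPt_n^*)$, Assumptions~\ref{assumption:asymptotic_cpmodel} and~\ref{assumption:normal} together with the linear decomposition \eqref{eq:lineardecomp_env} give, for $k\in\{1,2\}$, $\vY_{e_k}=\vX_{e_k}\beta_{e_k}+\vepsilon_{e_k}$; hence $\vY_{e_k}-\vX_{e_k}\hbeta_{e_2}=\vepsilon_{e_k}+\vX_{e_k}(\beta_{e_k}-\hbeta_{e_2})$. Plugging this into the ratio from step two yields the first displayed identity of the lemma, and expanding the two squared norms produces the second. The only non-routine step is the bookkeeping identity $\tilde{\beta}_{e_2}=\hbeta_{e_2}-\hbeta_{\mathrm{pool}}$ in step two, which is what makes the pooled regression disappear from the final expression; everything else is direct substitution and expansion.
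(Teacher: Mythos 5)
Your proposal is correct and follows essentially the same route as the paper's proof: cancel the common normalization $\norm{\resid_n}_2$, observe that the pooled OLS coefficient drops out so the statistic depends only on $\vY_{e_k}-\vX_{e_k}\hbeta_{e_2}$, substitute the per-environment model $\vY_{e_k}=\vX_{e_k}\beta_{e_k}+\vepsilon_{e_k}$, and expand the squares. The paper performs the cancellation of $\hbeta_{\mathrm{pool}}$ inline rather than isolating the identity $\tilde{\beta}_{e_2}=\hbeta_{e_2}-\hbeta_{\mathrm{pool}}$, but the computation is the same.
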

A proof of this result is given in
Appendix~\ref{sec:intermed_proofs}.

The following theorem gives the asymptotic distribution of
our test statistic under the null hypothesis $\HO$.

\begin{proposition}[asymptotic distribution under $\HO$]
  \label{thm:asymptotic_dist_h0_tot}
  Let $(\vY_{n},\vX_{n})_{n\in\N}$ satisfy
  Assumption~\ref{assumption:asymptotic_cpmodel} and for all $n\in\N$
  satisfy $\P^{(\vY_{n},\vX_{n})}\in\HO^n$, let $\sresid_n$ be the
  scaled residuals defined in \eqref{eq:scaled_resid} corresponding to
  $(\vY_n,\vX_n)$, let
  $\mathcal{E}_n\subseteq\mathcal{P}(\{1,\dots,n\})$ be a sequence of
  collections of pairwise disjoint environments satisfying conditions
  \hyperref[item:c1]{(C1)} and \hyperref[item:c1]{(C3,k)}. Then, it
  holds for all $e_n,f_n\in\mathcal{E}_n$ that
  \begin{equation*}
    T_{e_n,f_n}^3(\sresid_n)=\landauOp\left(\frac{1}{\sqrt{r_n}}\right)
    \quad\text{and}\quad
    T^{\max,\mathcal{F}^1(\mathcal{E}_n)}_3(\sresid_n)
    =\landauOp\left(\frac{\abs{\mathcal{E}_n}^{\frac{1}{k}}}{\sqrt{r_n}}\right),
    \quad\text{as }n\rightarrow\infty.
  \end{equation*}
\end{proposition}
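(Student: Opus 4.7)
The plan is to base everything on the explicit expansion provided by Lemma~\ref{thm:expansion_T}, turn the statement into a moment bound, and then convert that into the stochastic-order claims by Markov's inequality. Under $\HOS$ we have $\beta_{e_n}=\beta_{f_n}=\beta$, so writing $\Delta_n := \hbeta_{f_n}-\beta = (\vX_{f_n}^\top\vX_{f_n})^{-1}\vX_{f_n}^\top\vepsilon_{f_n}$, Lemma~\ref{thm:expansion_T} gives $T^3_{e_n,f_n}(\sresid_n)=(N_n-D_n)/D_n$ with
\begin{equation*}
N_n=\tfrac{1}{|e_n|}\vepsilon_{e_n}^\top\vepsilon_{e_n}-\tfrac{2}{|e_n|}\vepsilon_{e_n}^\top\vX_{e_n}\Delta_n+\tfrac{1}{|e_n|}\Delta_n^\top\vX_{e_n}^\top\vX_{e_n}\Delta_n,\qquad D_n=\tfrac{1}{|f_n|}\vepsilon_{f_n}^\top(\vI-\vP^S_{\vX,f_n})\vepsilon_{f_n}.
\end{equation*}
Since $\mathcal{E}_n$ is pairwise disjoint and the noise under $\HOS$ is i.i.d.\ Gaussian, $\vepsilon_{e_n}$ is independent of $(\vX,\Delta_n,\vepsilon_{f_n})$, which is the key structural fact I will exploit.

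The main work is to show the uniform moment bound $\E[|T^3_{e_n,f_n}|^{2k}]=\landauO(r_n^{-k})$. First, $\frac{1}{|e_n|}\vepsilon_{e_n}^\top\vepsilon_{e_n}-\sigma^2$ has $2k$-th moment of order $|e_n|^{-k}=\landauO(r_n^{-k})$ by elementary $\chi^2$-moment bounds. For the quadratic term I will condition on $\vX$ and bound it by $\lambda_{\max}(\vX_{e_n}^\top\vX_{e_n}/|e_n|)\cdot\|\Delta_n\|_2^2$; then \hyperref[item:c3]{(C3,k)} controls the first factor in $2k$-th moment, while Gaussian regression theory plus the lower bound on $\lambda_{\min}(\vX_{f_n}^\top\vX_{f_n}/|f_n|)$ from \hyperref[item:c3]{(C3,k)} controls $\|\Delta_n\|_2^{2k}$ by a constant times $|f_n|^{-k}$. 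For the cross term I will iterate expectations: conditionally on $(\vX_{e_n},\Delta_n)$, the statistic $\vepsilon_{e_n}^\top\vX_{e_n}\Delta_n$ is centered Gaussian with variance $\sigma^2\Delta_n^\top\vX_{e_n}^\top\vX_{e_n}\Delta_n$, so its $2k$-th moment is bounded by a constant multiple of $|e_n|^k\cdot(\lambda_{\max}\|\Delta_n\|^2)^k$, giving a contribution of order $|e_n|^{-k}|f_n|^{-k}$ which is strictly better than needed.

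Next, for the denominator $D_n$, conditional on $\vX_{f_n}$, the quantity $|f_n|D_n/\sigma^2$ follows a $\chi^2_{|f_n|-|S|}$ distribution (this uses Assumption~\ref{assumption:normal} and the projection structure). Standard $\chi^2$ concentration gives $\E[(D_n-\sigma^2)^{2k}]=\landauO(r_n^{-k})$; crucially, the same representation yields a uniform bound $\E[D_n^{-2k}]\leq C$ for $n$ large enough that $|f_n|-|S|>4k$, which is ensured by \hyperref[item:c1]{(C1)}. Combining these via Cauchy--Schwarz on $T^3_{e_n,f_n}=(N_n-D_n)/D_n$ yields $\E[|T^3_{e_n,f_n}|^{2k}]=\landauO(r_n^{-k})$, which in turn gives $T^3_{e_n,f_n}=\landauOp(1/\sqrt{r_n})$ by Markov. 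For the max, since $|\mathcal{F}^1(\mathcal{E}_n)|\leq |\mathcal{E}_n|^2$, a union bound combined with Markov gives
\begin{equation*}
\P\!\left(T^{\max,\mathcal{F}^1(\mathcal{E}_n)}_3>M\,\tfrac{|\mathcal{E}_n|^{1/k}}{\sqrt{r_n}}\right)\leq\sum_{(e,f)\in\mathcal{F}^1(\mathcal{E}_n)}\frac{\E[|T^3_{e,f}|^{2k}]}{(M|\mathcal{E}_n|^{1/k}/\sqrt{r_n})^{2k}}=\landauO(M^{-2k}),
\end{equation*}
which is the second claim.

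The hard part will be making the moment bounds genuinely uniform in $(e_n,f_n)$ and in $n$: both $\E[\|\Delta_n\|_2^{2k}]$ and $\E[D_n^{-2k}]$ rely on lower tails of $\lambda_{\min}(\vX^\top\vX/|e|)$ and of $\chi^2$, and these are only finite under the combination of Gaussianity (Assumption~\ref{assumption:normal}) and the two-sided spectral moment assumption \hyperref[item:c3]{(C3,k)}. Once those two moment bounds are established uniformly, the rest of the argument is straightforward algebra and a union bound.
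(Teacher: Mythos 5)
Your decomposition via Lemma~\ref{thm:expansion_T}, the term-by-term $2k$-th moment bounds for the numerator, and the Markov-plus-union-bound argument for $T^{\max,\mathcal{F}^1(\mathcal{E}_n)}_3$ (paying $\abs{\mathcal{E}_n}^{2}$ against $(\abs{\mathcal{E}_n}^{1/k})^{2k}$) coincide with the paper's proof, which establishes exactly the estimates you describe for the quadratic, cross and pure-noise terms. Where you genuinely diverge is the treatment of the ratio. The paper never controls inverse moments of the denominator: it shows that the numerator and denominator each concentrate around $\sigma_n^2$ in $L^{2k}$ at the stated rates and then invokes Lemma~\ref{thm:convergence_rate_fractions}, which handles the quotient probabilistically by restricting to the event where the denominator is within $\delta$ of its limit. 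You instead use the exact conditional $\chi^2_{\abs{f_n}-\abs{S}}$ law of the residual sum of squares to get $\E[D_n^{-2k}]\leq C$, which buys a direct moment bound $\E[\abs{T^3_{e_n,f_n}}^{2k}]=\landauO(r_n^{-k})$ --- a slightly stronger intermediate statement than the paper proves, and one that makes the union bound for the maximum entirely mechanical.

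Two caveats. First, Cauchy--Schwarz applied to $(N_n-D_n)\cdot D_n^{-1}$ gives $\E[\abs{T^3}^{2k}]\leq\E[(N_n-D_n)^{4k}]^{1/2}\,\E[D_n^{-4k}]^{1/2}$, so as written you need $4k$-th moments of the numerator terms, i.e.\ the spectral moment condition at level $2k$ rather than $k$, and $\abs{f_n}-\abs{S}>8k$. This is harmless in the paper's setting because Assumption~\ref{assumption:normal} yields the spectral condition for every order, but you should either say so explicitly, or avoid the exponent doubling by noting that conditionally on $\vX$ the numerator is built from $\vepsilon_{e_n}$ and $\vP_{f_n}\vepsilon_{f_n}$ and is therefore independent of $D_n=\abs{f_n}^{-1}\vepsilon_{f_n}^{\top}(\vI-\vP_{f_n})\vepsilon_{f_n}$, or simply fall back on the paper's fraction lemma. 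Second, your key independence claim for $\vepsilon_{e_n}$ and $\Delta_n$ uses $e_n\cap f_n=\varnothing$; the first assertion of the proposition also covers $e_n=f_n$, for which the paper gives a separate (and easier) projection argument. Only disjoint pairs enter $\mathcal{F}^1(\mathcal{E}_n)$, so this costs nothing for the maximum, but it deserves a sentence.
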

A proof of this result is given in Appendix~\ref{sec:intermed_proofs}.

Next, we give the corresponding theorem for the asymptotic distribution of
our test statistics under the alternative hypothesis $\HA$.

\begin{proposition}[asymptotic distribution under $\HA$]
  \label{thm:asymptotic_dist_ha}
  Let $(\vY_{n},\vX_{n})_{n\in\N}$ satisfy
  Assumption~\ref{assumption:asymptotic_cpmodel} and for all $n\in\N$
  satisfy $\P^{(\vY_{n},\vX_{n})}\in\HA^{n}(a_n,b_n)$, let $\sresid_n$
  be the scaled residuals defined in \eqref{eq:scaled_resid}
  corresponding to $(\vY_n,\vX_n)$. Additionally, let
  $i,j\in\{1,\dots,L+1\}$ such that
  $a_n=\abs{\sigma^2_{e_i(\CPt^*)}-\sigma^2_{e_j(\CPt^*)}}$ and
  $b_n=\norm{\beta_{e_i(\CPt^*)}-\beta_{e_j(\CPt^*)}}_2$. Then,
  assume that $f_n\subseteq e_{i}(\CPt^*_n)$ and
  $g_n\subseteq e_{j}(\CPt^*_n)$ are sequences satisfying that for
  $e_n\in\{f_n,g_n\}$ the sequences $(\sigma_{e_n}^2)_{n\in\N}$ and
  $(\beta_{e_n})_{n\in\N}$ are convergent and the limit of
  $\sigma_{e_n}^2$ is strictly positive and the sequence
  $(\{f_n,g_n\})_{n\in\N}$ satisfies assumptions
  \hyperref[item:c1]{(C1)} and \hyperref[item:c3]{(C3,k)}.
  Then it holds that
  \begin{equation}
    \label{eq:upperboundpart}
    T_{f_n,g_n}^3(\sresid_n)=\landauOp\left(a_n\right)+\landauOp\left(b_n^2\right)+\landauOp\left(\frac{1}{\sqrt{r_n}}\right),\quad\text{as
    }n\rightarrow\infty.
  \end{equation}
  Moreover, let $(\omega_n)_{n\in\N}$ be a sequence which satisfies
  $\tfrac{1}{\sqrt{r_n}}=\landauO(\omega_n)$ and additionally at
  least one of the two conditions $\omega_n=\landauo(a_n)$ or
  $\omega_n=\landauo(b_n^2)$. Then it also holds for all $t\geq 0$ that
  \begin{equation}
    \label{eq:lowerboundpart}
    \lim_{n\rightarrow\infty}\P\left(\tfrac{1}{\omega_n}\abs[\big]{T_{f_n,g_n}^3(\sresid_n)}\leq t\right)=0.
  \end{equation}
\end{proposition}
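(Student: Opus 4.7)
The plan is to establish an asymptotic expansion of $T_{f_n,g_n}^3(\sresid_n)$ that decouples a deterministic signal term from a vanishing remainder, so that \eqref{eq:upperboundpart} and \eqref{eq:lowerboundpart} both follow by inspection. First I would apply Lemma~\ref{thm:expansion_T}---which extends verbatim to the subsets $f_n, g_n$ since the i.i.d.\ structure on $e_i(\CPt^*_n)$ and $e_j(\CPt^*_n)$ passes to any subsets---to write $T_{f_n,g_n}^3(\sresid_n) = A_n/B_n - 1$, with $A_n$ and $B_n$ the scaled sums of squares appearing in the lemma. Setting $\delta_n \coloneqq \beta_{f_n} - \beta_{g_n}$ so that $\|\delta_n\|_2 = b_n$, I would decompose $\beta_{f_n} - \hbeta_{g_n} = \delta_n + (\beta_{g_n} - \hbeta_{g_n})$ inside $A_n$ to isolate the signal contribution $\delta_n$ from the OLS estimation error.

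The next step is a collection of standard stochastic bounds under the hypotheses: (i) for $e_n \in \{f_n, g_n\}$, $\tfrac{1}{|e_n|}\vepsilon_{e_n}^\top \vepsilon_{e_n} = \sigma^2_{e_n} + \landauOp(1/\sqrt{|e_n|})$ by the CLT for i.i.d.\ Gaussian squares; (ii) $\hbeta_{g_n} - \beta_{g_n} = \landauOp(1/\sqrt{|g_n|})$ by standard OLS theory, using \hyperref[item:c3]{(C3,k)} to control $(\vX_{g_n}^\top \vX_{g_n}/|g_n|)^{-1}$; (iii) by \hyperref[item:c3]{(C3,k)} the operator norm of $\vX_{f_n}^\top \vX_{f_n}/|f_n|$ is $\landauOp(1)$ and its smallest eigenvalue stays bounded below in probability. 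The critical observation is that $f_n \cap g_n = \varnothing$ renders $(\vepsilon_{f_n}, \vX_{f_n})$ independent of $\hbeta_{g_n}$, so conditioning on $\hbeta_{g_n}$ and using Gaussianity together with \hyperref[item:c3]{(C3,k)} bounds the cross term $\tfrac{1}{|f_n|}\vepsilon_{f_n}^\top \vX_{f_n}(\beta_{g_n} - \hbeta_{g_n})$ at rate $\landauOp(1/r_n)$---sharp enough to be absorbed into an $\landauOp(1/\sqrt{r_n})$ remainder. The other mixed terms are handled analogously, yielding the expansion
\begin{equation*}
  T_{f_n,g_n}^3(\sresid_n)
  = \frac{(\sigma^2_{f_n} - \sigma^2_{g_n}) + \delta_n^\top \bigl(\tfrac{1}{|f_n|}\vX_{f_n}^\top \vX_{f_n}\bigr)\delta_n}{\sigma^2_{g_n}}
  + \landauOp\!\left(\tfrac{1}{\sqrt{r_n}}\right),
\end{equation*}
with $B_n = \sigma^2_{g_n} + \landauOp(1/\sqrt{r_n})$ bounded away from zero by the assumption on the limit of $\sigma^2_{g_n}$.

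From this expansion, \eqref{eq:upperboundpart} is immediate: the first numerator term has magnitude $a_n$, and the quadratic form is nonnegative and pinched between $c b_n^2$ and $C b_n^2$ with probability tending to $1$ via \hyperref[item:c3]{(C3,k)}. For the divergence \eqref{eq:lowerboundpart}, the remainder divided by $\omega_n$ is $\landauOp(1)$ thanks to $1/\sqrt{r_n} = \landauO(\omega_n)$. If $\omega_n = \landauo(b_n^2)$, the nonnegative quadratic term divided by $\omega_n$ diverges at rate at least $c b_n^2/\omega_n$, swamping every other contribution regardless of the sign of the variance difference. If instead only $\omega_n = \landauo(a_n)$ holds, then $a_n/\omega_n \to \infty$ while $b_n^2/\omega_n$ is bounded, so the variance-difference term dominates in absolute value; in either regime $|T_{f_n,g_n}^3(\sresid_n)|/\omega_n \to \infty$ in probability, which is equivalent to \eqref{eq:lowerboundpart}.

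The principal obstacle I foresee is the careful bookkeeping of the cross terms in $A_n$, specifically showing that those coupling $\vepsilon_{f_n}$ with $\hbeta_{g_n}$ (or with $\delta_n$) are of order $\landauOp(1/\sqrt{r_n})$ rather than the worse $\landauOp(b_n/\sqrt{r_n})$ that a naive Cauchy--Schwarz estimate would yield and that would fail to be absorbed when $b_n$ is not small. Exploiting independence of $f_n$- and $g_n$-data together with the full $L^{2k}$ moment control from \hyperref[item:c3]{(C3,k)} is what secures the sharper rate needed to leave only a clean signal in the expansion.
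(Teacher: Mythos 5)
Your overall skeleton coincides with the paper's: both start from the representation in Lemma~\ref{thm:expansion_T}, decompose $\beta_{f_n}-\hbeta_{g_n}$ into the signal $\delta_n=\beta_{f_n}-\beta_{g_n}$ plus the OLS error, bound numerator and denominator separately, and conclude via Lemma~\ref{thm:convergence_rate_fractions}. For the upper bound \eqref{eq:upperboundpart} your argument is essentially the paper's Part~1 and works, with one bookkeeping correction: the cross term $\tfrac{1}{\abs{f_n}}\vepsilon_{f_n}^{\top}\vX_{f_n}\delta_n$ really is of order $\landauOp(b_n/\sqrt{r_n})$ and independence does not improve it (independence only sharpens the piece involving $\hbeta_{g_n}-\beta_{g_n}$); since $b_n$ is bounded this is harmless, and the paper absorbs it through $b_n/\sqrt{r_n}\leq b_n^2+1/\sqrt{r_n}$ rather than through a sharper rate.

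The genuine gap is in the divergence statement \eqref{eq:lowerboundpart}, where you argue by dominance of a deterministic signal. Writing $Q_n=\tfrac{1}{\abs{f_n}}\vX_{f_n}^{\top}\vX_{f_n}$, the leading part of the numerator is $(\sigma^2_{f_n}-\sigma^2_{g_n})+\delta_n^{\top}Q_n\delta_n$, and the first term is \emph{signed}. Your claim that when $\omega_n=\landauo(b_n^2)$ the nonnegative quadratic form swamps everything ``regardless of the sign of the variance difference'' fails precisely in the regime where both $a_n/\omega_n\to\infty$ and $b_n^2/\omega_n\to\infty$: if $\sigma^2_{f_n}<\sigma^2_{g_n}$ and $a_n$ is comparable to $\E(\delta_n^{\top}Q_n\delta_n)$, the two leading contributions can cancel to order $\landauo(a_n+b_n^2)$, so deterministic dominance alone does not force $\abs{T^3_{f_n,g_n}}/\omega_n\to\infty$. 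This is exactly why the paper does not argue via a dominant term: it instead proves separate anti-concentration statements for the centred noise-variance term and for the quadratic form (its $A_n$ and $C_n$) using the Paley--Zygmund inequality, and then runs an explicit three-case analysis according to which of $a_n$, $b_n^2$ dominates $\omega_n$. A second, related problem is your assertion that $\delta_n^{\top}Q_n\delta_n$ is pinched between $cb_n^2$ and $Cb_n^2$ with probability tending to one ``via (C3,k)'': that condition only controls moments of the extreme eigenvalues of $Q_n$, and a lower bound on $\E(\lambda_{\min}(Q_n)^{2k})$ does not yield a high-probability lower bound on $\lambda_{\min}(Q_n)$. The paper avoids needing such a bound by working with $\E(C_n)$ and $\E(C_n^2)$ and again invoking Paley--Zygmund. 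Without importing these two devices (or equivalents), the divergence step of your proof does not go through.
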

A proof of this result is given in Appendix~\ref{sec:intermed_proofs}.

\subsubsection{Proofs of intermediate results}\label{sec:intermed_proofs}

\begin{proof}[Lemma~\ref{thm:expansion_T}]
  The result is given by the following straight forward calculation,
  \begin{align*}
    T_{e_1,e_2}^3(\sresid_n)+1
    &=\frac{(\sresid_{e_1}-\vX_{e_1}\hgamma_{e_2})^{\top}(\sresid_{e_1}-\vX_{e_1}\hgamma_{e_2})}{\hat{s}^2_{e_2}\abs{e_1}}\\
    &=\frac{\frac{1}{\abs{e_1}}\norm{\sresid_{e_1}-\vX_{e_1}\hgamma_{e_2}}_2^2}{\frac{1}{\abs{e_2}}\norm{\sresid_{e_2}-\vX_{e_2}\hgamma_{e_2}}_2^2}\\
    &=\frac{\frac{1}{\abs{e_1}}\norm{\resid_{e_1}-\vX_{e_1}(\vX_{e_2}^{\top}\vX_{e_2})^{-1}\vX_{e_2}^{\top}\resid_{e_2}}_2^2}{\frac{1}{\abs{e_2}}\norm{\resid_{e_2}-\vX_{e_2}(\vX_{e_2}^{\top}\vX_{e_2})^{-1}\vX_{e_2}^{\top}\resid_{e_2}}_2^2}\\
    &=\frac{\frac{1}{\abs{e_1}}\norm{\vY_{e_1}-\vX_{e_1}\hbeta-\vX_{e_1}(\vX_{e_2}^{\top}\vX_{e_2})^{-1}\vX_{e_2}^{\top}\vY_{e_2}+\vX_{e_1}(\vX_{e_2}^{\top}\vX_{e_2})^{-1}\vX_{e_2}^{\top}\vX_{e_2}\hbeta}_2^2}{\frac{1}{\abs{e_2}}\norm{\vY_{e_2}-\vX_{e_2}\hbeta-\vX_{e_2}(\vX_{e_2}^{\top}\vX_{e_2})^{-1}\vX_{e_2}^{\top}\vY_{e_2}+\vX_{e_2}(\vX_{e_2}^{\top}\vX_{e_2})^{-1}\vX_{e_2}^{\top}\vX_{e_2}\hbeta}_2^2}\\
    &=\frac{\frac{1}{\abs{e_1}}\norm{\vY_{e_1}-\vX_{e_1}\hbeta_{e_2}}_2^2}{\frac{1}{\abs{e_2}}\norm{\vY_{e_2}-\vX_{e_2}\hbeta_{e_2}}_2^2}\\
    &=\frac{\frac{1}{\abs{e_1}}\norm{\vX_{e_1}\beta_{e_1}+\vepsilon_{e_1}-\vX_{e_1}\hbeta_{e_2}}_2^2}{\frac{1}{\abs{e_2}}\norm{\vX_{e_2}\beta_{e_2}+\vepsilon_{e_2}-\vX_{e_2}\hbeta_{e_2}}_2^2}\\
    &=\frac{\frac{1}{\abs{e_1}}\norm{\vepsilon_{e_1}+\vX_{e_1}(\beta_{e_1}-\hbeta_{e_2})}_2^2}{\frac{1}{\abs{e_2}}\norm{\vepsilon_{e_2}+\vX_{e_2}(\beta_{e_2}-\hbeta_{e_2})}_2^2}\\
    &=\frac{\frac{1}{\abs{e_1}}\left(\vepsilon_{e_1}^{\top}\vepsilon_{e_1}+2\vepsilon_{e_1}^{\top}\vX_{e_1}(\beta_{e_1}-\hbeta_{e_2})+(\beta_{e_1}-\hbeta_{e_2})^{\top}\vX_{e_1}^{\top}\vX_{e_1}(\beta_{e_1}-\hbeta_{e_2})\right)}{\frac{1}{\abs{e_2}}\left(\vepsilon_{e_2}^{\top}\vepsilon_{e_2}+2\vepsilon_{e_2}^{\top}\vX_{e_2}(\beta_{e_2}-\hbeta_{e_2})+(\beta_{e_2}-\hbeta_{e_2})^{\top}\vX_{e_2}^{\top}\vX_{e_2}(\beta_{e_2}-\hbeta_{e_2})\right)},
  \end{align*}
  which completes the proof of Lemma~\ref{thm:expansion_T}.
\end{proof}

\begin{proof}[Proposition~\ref{thm:asymptotic_dist_h0_tot}]
  Under $\HOS$ for all $n\in\N$ there exist fixed
  $\beta_n\in\R^{d\times 1}$ and $\sigma_n^2\in\R_{>0}$ such that for
  all $e_n\in\mathcal{E}_n$ it holds that $\beta_{e_n}=\beta_n$ and
  $\sigma_{e_n}^2=\sigma_n^2$. The main idea in the first part of the
  proof is to use the representation of $T_{e_1,e_2}^3$ given in
  Lemma~\ref{thm:expansion_T}, analyze the convergence of all terms
  individually and finally conclude by combining the convergences.
  
  We begin by proving for all $e_n,f_n\in\mathcal{E}_n$ the following
  estimates
  \begin{enumerate}[(a)]
  \item $\E\left(\left(\frac{1}{\abs{e_{n}}}\vepsilon_{e_n}^{\top}\vepsilon_{e_n}-\sigma_n^2\right)^{2k}\right)\leq \frac{C_1}{\abs{e_{n}}^k}$\label{item:term1}
  \item $\E\left(\left(\frac{1}{\abs{e_{n}}}\vepsilon_{e_n}^{\top}\vX_{e_n}(\beta_{n}-\hat{\beta}_{f_n})\right)^{2k}\right)\leq\frac{C_2}{\abs{f_{n}}^k}$\label{item:term2}
  \item $\E\left(\left(\frac{1}{\abs{e_{n}}}(\beta_{n}-\hat{\beta}_{f_n})^{\top}\vX_{e_n}^{\top}\vX_{e_n}(\beta_{n}-\hat{\beta}_{f_n})\right)^{2k}\right)\leq\frac{C_3}{\abs{f_{n}}^{2k}}$\label{item:term3}
  \end{enumerate}
  
  To prove \eqref{item:term1} consider the following calculation,
  \begin{align*}
    \E\left(\left(\tfrac{1}{\abs{e_{n}}}\vepsilon_{e_n}^{\top}\vepsilon_{e_n}-\sigma_{n}^2\right)^{2k}\right)
    &=\E\left(\left(\tfrac{1}{\abs{e_{n}}}\sum_{i\in
      e_{n}}(\epsilon_i^2-\sigma_{n}^2)\right)^{2k}\right)\\
    &=\frac{1}{\abs{e_{n}}^{2k}}\sum_{i_1,\dots,i_{2k}\in
      e_{n}}\E\left((\epsilon_{i_1}^2-\sigma_{n}^2)\cdots(\epsilon_{i_{2k}}^2-\sigma_{n}^2)\right)\\
    &=\frac{1}{\abs{e_{n}}^{2k}}\sum_{i_1,\dots,i_{k}\in
      e_{n}}\E\left((\epsilon_{i_1}^2-\sigma_{n}^2)^2\cdots(\epsilon_{i_{k}}^2-\sigma_{n}^2)^2\right)\\
    &\leq\frac{C^k}{\abs{e_{n}}^k},
  \end{align*}
  where $C>0$ is a constant satisfying that for all $n\in\N$ and $\ell\in\{1,\dots,k\}$ that
  $\E\left((\epsilon_{i}^2-\sigma_{n}^2)^{2\ell}\right)\leq C.$
  
  Next, we prove \eqref{item:term2}. An application of the Cauchy-Schwarz inequality leads
  to the following inequality
  \begin{equation}
    \label{eq:termsnonind}
    \left(\tfrac{1}{\abs{e_{n}}}\abs[\big]{\vepsilon_{e_n}^{\top}\vX_{e_n}(\beta_{n}-\hat{\beta}_{f_n})}\right)^{2k}\leq\left(\tfrac{1}{\abs{e_n}}\norm{\vepsilon_{e_n}}_{\R^{\abs{e_n}}}^2\right)^{k}\left(\tfrac{1}{\abs{e_n}}\norm[\big]{\vX_{e_n}(\beta_{n}-\hat{\beta}_{f_n})}_{\R^{\abs{e_n}}}^2\right)^{k}.
  \end{equation}
  We now distinguish between the two cases $e_n=f_n$ and $e_n\cap
  f_n=\varnothing$. Begin with the case $e_n\cap f_n=\varnothing$, then $\vepsilon_{e_n}$
  and $\vX_{e_n}$ are both independent of $\hat{\beta}_{f_n}$ and
  hence \eqref{eq:termsnonind} together with a spectral inequality imply that
  \begin{align*}
    &\E\left(\left(\tfrac{1}{\abs{e_{n}}}\abs[\big]{\vepsilon_{e_n}^{\top}\vX_{e_n}(\beta_{n}-\hat{\beta}_{f_n})}\right)^{2k}\right)\\
    &\quad\leq\E\left(\left(\tfrac{1}{\abs{e_n}}\norm{\vepsilon_{e_n}}_{\R^{\abs{e_n}}}^2\right)^{k}\left(\tfrac{1}{\abs{e_n}}\norm[\big]{\vX_{e_n}(\beta_{n}-\hat{\beta}_{f_n})}_{\R^{\abs{e_n}}}^2\right)^{k}\right)\\
    &\quad\leq\E\left(\left(\tfrac{1}{\abs{e_n}}\norm{\vepsilon_{e_n}}_{\R^{\abs{e_n}}}^2\right)^{k}\right)
      \E\left(\left(\lambda_{\max}\left(\tfrac{1}{\abs{e_n}}\vX_{e_n}^{\top}\vX_{e_n}\right)\right)^{k}\right)
      \E\left(\norm[\big]{\beta_{n}-\hat{\beta}_{f_n}}_{\R^d}^{2k}\right).
  \end{align*}
  So together with Theorem~\ref{thm:ols_convergence_strong}, the
  moment bounds on
  $\lambda_{\max}\left(\tfrac{1}{\abs{e_n}}\vX_{e_n}^{\top}\vX_{e_n}\right)$
  and the fact that $\vepsilon_n$ is Gaussian distributed this implies that
  \begin{equation*}
    \E\left(\left(\tfrac{1}{\abs{e_{n}}}\abs[\big]{\vepsilon_{e_n}^{\top}\vX_{e_n}(\beta_{n}-\hat{\beta}_{f_n})}\right)^{2k}\right)\leq \frac{C}{\abs{f_n}^k}.
  \end{equation*}
  Next, assume that $e_n=f_n$. Then, defining the projection matrix
  $\vP_{e_n}\coloneqq\vX_{e_n}(\vX_{e_n}^{\top}\vX_{e_n})\vX_{e_n}^{\top}$
  we get that
  \begin{align*}
    \E\left(\left(\tfrac{1}{\abs{e_{n}}}\abs[\big]{\vepsilon_{e_n}^{\top}\vX_{e_n}(\beta_{n}-\hat{\beta}_{e_n})}\right)^{2k}\right)
    &=\E\left(\left(\tfrac{1}{\abs{e_{n}}}\abs[\big]{\vepsilon_{e_n}^{\top}\vP_{e_n}\vepsilon_{e_n}}\right)^{2k}\right)\\
    &=\frac{1}{\abs{e_{n}}^{2k}}\E\left(\norm[\big]{\vP_{e_n}\vepsilon_{e_n}}^{4k}_{\R^{\abs{e_n}}}\right)\leq\frac{C}{\abs{e_n}^{2k}},
  \end{align*}
  where in the last step we used the same argument as in
  \eqref{eq:projection}. This completes the proof of
  \eqref{item:term2}.

  Finally, in order to prove \eqref{item:term3} we again distinguish
  between the two cases. Begin by assuming that $e_n=f_n$, then
  together with Theorem~\ref{thm:ols_convergence_strong} we get that
  \begin{align*}
    &\E\left(\left(\tfrac{1}{\abs{e_{n}}}(\beta_{n}-\hat{\beta}_{e_n})^{\top}\vX_{e_n}^{\top}\vX_{e_n}(\beta_{n}-\hat{\beta}_{e_n})\right)^{2k}\right)\\
    &\quad=\frac{1}{\abs{e_{n}}^{2k}}\E\left(\norm[\big]{\vX_{e_n}(\beta_{n}-\hat{\beta}_{e_n})}^{4k}\right)\leq\frac{C}{\abs{e_{n}}^{2k}}.
  \end{align*}
  Furthermore, assume $e_n\cap f_n=\varnothing$ then, using a spectral
  inequality together with Theorem~\ref{thm:ols_convergence_strong} we
  get that
    \begin{align*}
    &\E\left(\left(\tfrac{1}{\abs{e_{n}}}(\beta_{n}-\hat{\beta}_{f_n})^{\top}\vX_{e_n}^{\top}\vX_{e_n}(\beta_{n}-\hat{\beta}_{f_n})\right)^{2k}\right)\\
    &\quad\leq\E\left(\left(\lambda_{\max}\left(\tfrac{1}{\abs{e_n}}\vX_{e_n}^{\top}\vX_{e_n}\right)\norm[\big]{\beta_{n}-\hat{\beta}_{f_n}}^2\right)^{2k}\right)\\
    &\quad=\E\left(\lambda_{\max}\left(\tfrac{1}{\abs{e_n}}\vX_{e_n}^{\top}\vX_{e_n}\right)^{2k}\right)\E\left(\norm[\big]{\beta_{n}-\hat{\beta}_{f_n}}^{4k}\right)\leq\frac{C}{\abs{f_n}^{2k}},
  \end{align*}
  which completes the proof of \eqref{item:term3}.

  We can now combine these results to analyze the convergence
  properties of the test statistic $T^{\max,\mathcal{F}^1(\mathcal{E}_n)}_3$. As an intermediate
  step consider the following two statistics,
  \begin{equation}
    \label{eq:top_part}
    U_{e_n,f_n}\coloneqq \frac{1}{\abs{e_n}}\left(\vepsilon_{e_n}^{\top}\vepsilon_{e_n}+2\vepsilon_{e_n}^{\top}\vX_{e_n}(\beta_{n}-\hbeta_{f_n})+(\beta_{n}-\hbeta_{f_n})^{\top}\vX_{e_n}^{\top}\vX_{e_n}(\beta_{n}-\hbeta_{f_n})\right)
  \end{equation}
  and
  \begin{equation}
    \label{eq:bottom_part}
    V_{f_n}\coloneqq \frac{1}{\abs{f_n}}\left(\vepsilon_{f_n}^{\top}\vepsilon_{f_n}+2\vepsilon_{f_n}^{\top}\vX_{f_n}(\beta_{n}-\hbeta_{f_n})+(\beta_{n}-\hbeta_{f_n})^{\top}\vX_{f_n}^{\top}\vX_{f_n}(\beta_{n}-\hbeta_{f_n})\right).
  \end{equation}
  Using \eqref{item:term1}, \eqref{item:term2} and \eqref{item:term3}
  together with the Minkowski-inequality we get that
  \begin{align*}
    &\norm[\big]{U_{e_n,f_n}-\sigma_n^2}_{\operatorname{L}^{2k}}\\
    &\quad\leq\norm[\big]{\tfrac{1}{\abs{e_n}}\vepsilon_{e_n}^{\top}\vepsilon_{e_n}-\sigma_n^2}_{\operatorname{L}^{2k}}
      +\norm[\big]{\tfrac{2}{\abs{e_n}}\vepsilon_{e_n}^{\top}\vX_{e_n}(\beta_n-\hat{\beta}_{f_n})}_{\operatorname{L}^{2k}}
      +\norm[\big]{\tfrac{1}{\abs{e_n}}\vX_{e_n}^{\top}(\beta_n-\hat{\beta}_{f_n})}_{\operatorname{L}^{2k}}\\
    &\quad\leq
      \frac{C_1}{\sqrt{\abs{e_n}}}+\frac{C_2}{\sqrt{\abs{f_n}}}+\frac{C_3}{\abs{f_n}}\\
    &\quad\leq \frac{C}{\sqrt{r_n}}.
  \end{align*}
  Hence, with a union bound and Chebyshev's inequality we get for all
  constants $M>0$ that
  \begin{align*}
    \P\left(\left(\abs{\mathcal{E}_n}^{-\tfrac{1}{k}}r_n^{\tfrac{1}{2}}\right)\max_{e_n,f_n\in\mathcal{E}_n}\abs{U_{e_n,f_n}-\sigma_n^2}>M\right)
    &\leq\sum_{e_n,f_n\in\mathcal{E}_n}\P\left(\left(\abs{\mathcal{E}_n}^{-\tfrac{1}{k}}r_n^{\tfrac{1}{2}}\right)\abs{U_{e_n,f_n}-\sigma_n^2}>M\right)\\
    &\leq\sum_{e_n,f_n\in\mathcal{E}_n}\E\left(\abs{U_{e_n,f_n}-\sigma_n^2}^{2k}\right)\left(\abs{\mathcal{E}_n}^{-\tfrac{1}{k}}r_n^{\tfrac{1}{2}}\right)^{2k}M^{-2k}\\
    &\leq\frac{\abs{\mathcal{E}_n}(\abs{\mathcal{E}_n}-1)}{2}\frac{C^{2k}}{r_n^k}\abs{\mathcal{E}_n}^{-2}r_n^{k}M^{-2k}\\
    &=\landauO\left(\abs{\mathcal{E}_n}^2\right)\landauO\left(r_n^{-k}\right)\landauO\left(\abs{\mathcal{E}_n}^{-2}r_n^{k}\right)=\landauO(1).
  \end{align*}
  This implies,
  \begin{equation}
    \label{eq:unconv}
    \max_{e_n,f_n\in\mathcal{E}_n}U_{e_n,f_n}=\sigma_n^2+\landauOp\left(\abs{\mathcal{E}_n}^{\tfrac{1}{k}}r_n^{-\tfrac{1}{2}}\right)
    \quad\text{as } n\rightarrow\infty.
  \end{equation}
  Similarly we for the $V_{f_n}$ in \eqref{eq:bottom_part} we get
  \begin{align*}
    \norm[\big]{V_{f_n}-\sigma_n^2}_{\operatorname{L}^{2k}}
    &\leq\norm[\big]{\tfrac{1}{\abs{f_n}}\vepsilon_{f_n}^{\top}\vepsilon_{f_n}-\sigma_n^2}_{\operatorname{L}^{2k}}
      +\norm[\big]{\tfrac{1}{\abs{f_n}}\vepsilon_{f_n}^{\top}\vX_{f_n}(\beta_n-\hat{\beta}_{f_n})}_{\operatorname{L}^{2k}}
      +\norm[\big]{\tfrac{1}{\abs{f_n}}\vX_{f_n}^{\top}(\beta_n-\hat{\beta}_{f_n})}_{\operatorname{L}^{2k}}\\
    &\leq
      \frac{C_1}{\sqrt{\abs{f_n}}}+\frac{C_2}{\sqrt{\abs{f_n}}}+\frac{C_3}{\abs{f_n}}\\
    &\leq \frac{C}{\sqrt{r_n}}.
  \end{align*}
  And again, using the Chebyshev's inequality we get for all
  constants $M>0$ that
  \begin{align*}
    \P\left(\sqrt{r_n}\min_{f_n\in\mathcal{E}_n}\abs{V_{f_n}-\sigma_n^2}>M\right)
    &\leq\P\left(\sqrt{r_n}\abs{V_{f_n}-\sigma_n^2}>M\right)\\
    &\leq\E\left(\abs{V_{f_n}-\sigma_n^2}^{2k}\right)\left(\sqrt{r_n}\right)^{2k}M^{-2k}\\
    &\leq\frac{C^{2k}}{r_n^k}r_n^{k}M^{-2k}=\landauO(1),
  \end{align*}
  which implies that
  \begin{equation}
    \label{eq:vnconv}
    \min_{f_n\in\mathcal{E}_n}V_{f_n}=\sigma_n^2+\landauOp\left(r_n^{-\tfrac{1}{2}}\right)
    \quad\text{as } n\rightarrow\infty.
  \end{equation}
  Hence, \eqref{eq:unconv} and \eqref{eq:vnconv} together with
  Lemma~\ref{thm:convergence_rate_fractions} imply that
  \begin{equation*}
    \frac{\max_{e_n,f_n\in\mathcal{E}_n}U_{e_n,f_n}}{\min_{f_n\in\mathcal{E}_n}V_{f_n}}-1=\landauOp\left(\abs{\mathcal{E}_n}^{\tfrac{1}{k}}r_n^{-\tfrac{1}{2}}\right)
    \quad\text{as } n\rightarrow\infty.
  \end{equation*}
  Since for any $M\geq1$ it holds that
  \begin{equation*}
    \P\left(\abs[\big]{T^{\max,\mathcal{F}^1(\mathcal{E}_n)}_3}>M\right)
    \leq\P\left(\abs[\bigg]{\frac{\max_{e_n,f_n\in\mathcal{E}_n}U_{e_n,f_n}}{\min_{f_n\in\mathcal{E}_n}V_{f_n}}-1}>M\right),
  \end{equation*}
  we also have that
  \begin{equation*}
    T^{\max,\mathcal{F}^1(\mathcal{E}_n)}_3=\landauOp\left(\abs{\mathcal{E}_n}^{\tfrac{1}{k}}r_n^{-\tfrac{1}{2}}\right)
    \quad\text{as } n\rightarrow\infty.
  \end{equation*}
  This completes the proof of
  Proposition~\ref{thm:asymptotic_dist_h0_tot}.
\end{proof}

\begin{proof}[Proposition~\ref{thm:asymptotic_dist_ha}]
  We divide the proof into two parts. In the first part we prove
  \eqref{eq:upperboundpart} and then in the second part we prove
  \eqref{eq:lowerboundpart} using some results from the first
  part.
  
  \textbf{Part 1:}\\
  Similar to the proof of Proposition~\ref{thm:asymptotic_dist_h0_tot},
  the main idea in the proof is to use the representation of
  $T_{e_1,e_2}^3$ given in Lemma~\ref{thm:expansion_T}, analyze the
  convergence of all terms individually and finally conclude by
  combining the convergences.
  
  We begin by proving the following inequalities
  \begin{enumerate}[(a)]
  \item for $e_n\in\{f_n,g_n\}$: $\norm[\big]{\frac{1}{\abs{e_{n}}}\vepsilon_{e_n}^{\top}\vepsilon_{e_n}-\sigma_{e_n}^2}_{L^2}\leq \frac{C}{\sqrt{\abs{e_{n}}}}$\label{item:term1ha}
  \item
    $\norm[\big]{\frac{1}{\abs{f_{n}}}\vepsilon_{f_n}^{\top}\vX_{f_n}(\beta_{f_n}-\hat{\beta}_{g_n})}_{L^2}\leq\frac{C_1}{\sqrt{\abs{f_{n}}\abs{g_n}}}+\frac{C_2}{\sqrt{\abs{f_{n}}}}b_n$\label{item:term2ha}
  \item $\norm[\big]{\frac{1}{\abs{g_{n}}}\vepsilon_{g_n}^{\top}\vX_{g_n}(\beta_{g_n}-\hat{\beta}_{g_n})}_{L^2}\leq\frac{C}{\sqrt{\abs{g_{n}}}}$\label{item:term3ha}
  \item
    $\norm[\big]{\frac{1}{\abs{f_{n}}}(\beta_{f_n}-\hat{\beta}_{g_n})^{\top}\vX_{f_n}^{\top}\vX_{f_n}(\beta_{f_n}-\hat{\beta}_{g_n})}_{L^2}\leq\frac{C_1}{\abs{g_{n}}}+C_2b_n^2$\label{item:term4ha}
  \item $\norm[\big]{\frac{1}{\abs{g_{n}}}(\beta_{g_n}-\hat{\beta}_{g_n})^{\top}\vX_{g_n}^{\top}\vX_{g_n}(\beta_{n}-\hat{\beta}_{g_n})}_{L^2}\leq\frac{C}{\abs{g_{n}}}$\label{item:term5ha}
  \end{enumerate}
  
  Let $e_n\in\{f_n,g_n\}$, we first show \eqref{item:term1ha},
  \begin{align*}
    \E\left(\left(\frac{1}{\abs{e_n}}\vepsilon_{e_n}^{\top}\vepsilon_{e_n}-\sigma_{e_n}^2\right)^2\right)
    &=\E\left(\left(\frac{1}{\abs{e_{n}}}\sum_{i\in
      e_{n}}(\epsilon_i^2-\sigma_{e_n}^2)\right)^2\right)\\
    &=\frac{1}{\abs{e_{n}}^2}\sum_{i,j\in
      e_{n}}\E\left((\epsilon_i^2-\sigma_{e_n}^2)(\epsilon_j^2-\sigma_{e_n}^2)\right)\\
    &=\frac{1}{\abs{e_{n}}^2}\sum_{i\in
      e_{n}}\E\left(\left(\epsilon_i^2-\sigma_{e_n}^2\right)^2\right)\\
    &=\frac{1}{\abs{e_{n}}^2}\sum_{i\in
      e_{n}}\var\left(\epsilon_i^2\right)\\
    &=\frac{2\sigma_{e_n}^4}{\abs{e_{n}}}.
  \end{align*}
  Since the sequence $(\sigma_{e_n})_{n\in\N}$ is assumed to be
  convergent this proves \eqref{item:term1ha}.
  In order to prove \eqref{item:term2ha} we use the independence of
  $\vX$ and $\vepsilon$ and the Cauchy-Schwarz inequality to get
  \begin{align}
    \E\left(\left(\frac{1}{\abs{f_n}}\vepsilon_{f_n}^{\top}\vX_{f_n}(\beta_{f_n}-\beta_{g_n})\right)^2\right)
    &=\frac{1}{\abs{f_n}^2}\sum_{i_1,i_2\in
      f_n}\E\left(\epsilon_{i_1}\vX_{i_1}(\beta_{f_n}-\beta_{g_n})\epsilon_{i_2}\vX_{i_2}(\beta_{f_n}-\beta_{g_n})\right)\nonumber\\
    &=\frac{1}{\abs{f_n}^2}\sum_{i\in
      f_n}\E\left(\epsilon_{i}^2\right)\E\left(\left(\vX_{i}(\beta_{f_n}-\beta_{g_n})\right)^2\right)\nonumber\\
    &\leq\frac{\sigma_{f_n}^2}{\abs{f_n}}\E\left(\norm{\vX_i}_{\R^d}^2\right)\norm{\beta_{f_n}-\beta_{g_n}}_{\R^d}^2\nonumber\\
    &\leq\frac{C}{\abs{f_n}}\norm{\beta_{f_n}-\beta_{g_n}}_{\R^d}^2,\label{eq:triangleoneCC}
  \end{align}
  where in the last step we used that the sequence $(\sigma_{e_n})_{n\in\N}$ is convergent
  and $\E(\norm{\vX_i}^2)$ (for $i\in f_n$) is bounded. A similar inequality,
  additionally using Theorem~\ref{thm:ols_convergence_strong}, leads to
  \begin{align}
    \E\left(\left(\frac{1}{\abs{f_n}}\vepsilon_{f_n}^{\top}\vX_{f_n}(\beta_{g_n}-\hat{\beta}_{g_n})\right)^2\right)
    &=\frac{1}{\abs{f_n}^2}\sum_{i_1,i_2\in
      f_n}\E\left(\epsilon_{i_1}\vX_{i_1}(\beta_{g_n}-\hat{\beta}_{g_n})\epsilon_{i_2}\vX_{i_2}(\beta_{g_n}-\hat{\beta}_{g_n})\right)\nonumber\\
    &=\frac{1}{\abs{f_n}^2}\sum_{i\in
      f_n}\E\left(\epsilon_{i}^2\right)\E\left(\left(\vX_{i}(\beta_{g_n}-\hat{\beta}_{g_n})\right)^2\right)\nonumber\\
    &\leq\frac{\sigma_{f_n}^2}{\abs{f_n}}\E\left(\norm{\vX_i}_{\R^d}^2\right)\E\left(\norm{\beta_{g_n}-\hat{\beta}_{g_n}}_{\R^d}^2\right)\nonumber\\
    &\leq\frac{C}{\abs{f_n}\abs{g_n}}.\label{eq:triangletwoCC}
  \end{align}
  Finally, combining \eqref{eq:triangleoneCC} and
  \eqref{eq:triangletwoCC} with the Minkowski inequality proves
  \eqref{item:term2ha}.

  In order to prove \eqref{item:term3ha} define the projection matrix
  $\vP_{g_n}\coloneqq\vX_{g_n}(\vX_{g_n}^{\top}\vX_{g_n})\vX_{g_n}^{\top}$
  we get that
  \begin{align*}
    \E\left(\left(\tfrac{1}{\abs{g_{n}}}\abs[\big]{\vepsilon_{g_n}^{\top}\vX_{g_n}(\beta_{g_n}-\hat{\beta}_{g_n})}\right)^{2}\right)
    &=\E\left(\left(\tfrac{1}{\abs{g_{n}}}\abs[\big]{\vepsilon_{g_n}^{\top}\vP_{g_n}\vepsilon_{g_n}}\right)^{2}\right)\\
    &=\frac{1}{\abs{g_{n}}^{2}}\E\left(\norm[\big]{\vP_{g_n}\vepsilon_{g_n}}^{4}_{\R^{\abs{g_n}}}\right)\\
    &\leq\frac{C}{\abs{g_n}^{2}},
  \end{align*}
  where in the last step we used the same argument as in
  \eqref{eq:projection}.
  
  Next, we prove \eqref{item:term4ha}. This is again a straight
  forward estimate using Minkowski's inequality, $(a+b)^2\leq 2a^2+2b^2$ and
  Theorem~\ref{thm:ols_convergence_strong},
  \begin{align*}
    &\E\left(\left(\tfrac{1}{\abs{f_{n}}}(\beta_{f_n}-\hat{\beta}_{g_n})^{\top}\vX_{f_n}^{\top}\vX_{f_n}(\beta_{f_n}-\hat{\beta}_{g_n})\right)^2\right)\\
    &\quad\leq\E\left(\lambda_{\max}\left(\tfrac{1}{\abs{f_n}}\vX_{f_n}^{\top}\vX_{f_n}\right)^2\norm[\big]{\beta_{f_n}-\hat{\beta}_{g_n}}_{\R^d}^4\right)\\
    &\quad\leq\E\left(\lambda_{\max}\left(\tfrac{1}{\abs{f_n}}\vX_{f_n}^{\top}\vX_{f_n}\right)^2\right)\E\left(\left(2\norm[\big]{\beta_{f_n}-\beta_{g_n}}_{\R^d}^2+2\norm[\big]{\beta_{g_n}-\hat{\beta}_{g_n}}_{\R^d}^2\right)^2\right)\\
    &\quad\leq
      C_1\E\left(4\norm[\big]{\beta_{f_n}-\beta_{g_n}}_{\R^d}^4+4\norm[\big]{\beta_{f_n}-\beta_{g_n}}_{\R^d}^2\norm[\big]{\beta_{g_n}-\hat{\beta}_{g_n}}_{\R^d}^2+4\norm[\big]{\beta_{g_n}-\hat{\beta}_{g_n}}_{\R^d}^4\right)\\
    &\quad\leq C_1\left(4\norm[\big]{\beta_{f_n}-\beta_{g_n}}_{\R^d}^4+4\norm[\big]{\beta_{f_n}-\beta_{g_n}}_{\R^d}^2\frac{C_2}{\abs{g_n}}+4\frac{C_2^2}{\abs{g_n}^2}\right)\\
    &\quad= C_1\left(2\norm[\big]{\beta_{f_n}-\beta_{g_n}}_{\R^d}^2+\frac{2C_2}{\abs{g_n}}\right)^2.
  \end{align*}

  Finally, \eqref{item:term5ha} is an immediate consequence of Theorem~\ref{thm:ols_convergence_strong},
  \begin{equation*}
    \E\left(\left(\tfrac{1}{\abs{g_{n}}}(\beta_{g_n}-\hat{\beta}_{g_n})^{\top}\vX_{g_n}^{\top}\vX_{g_n}(\beta_{g_n}-\hat{\beta}_{g_n})\right)^{2}\right)
    =\frac{1}{\abs{g_{n}}^{2}}\E\left(\norm[\big]{\vX_{g_n}(\beta_{g_n}-\hat{\beta}_{g_n})}^{4}\right)
    \leq\frac{C}{\abs{g_{n}}^{2}}.
  \end{equation*}

  As in the proof of Proposition~\ref{thm:asymptotic_dist_h0_tot} consider
  the following two statistics,
  \begin{equation}
    \label{eq:top_partha}
    U_{f_n,g_n}\coloneqq \frac{1}{\abs{f_n}}\left(\vepsilon_{f_n}^{\top}\vepsilon_{f_n}+2\vepsilon_{f_n}^{\top}\vX_{f_n}(\beta_{f_n}-\hbeta_{g_n})+(\beta_{f_n}-\hbeta_{g_n})^{\top}\vX_{f_n}^{\top}\vX_{f_n}(\beta_{f_n}-\hbeta_{g_n})\right)
  \end{equation}
  and
  \begin{equation}
    \label{eq:bottom_partha}
    V_{g_n}\coloneqq \frac{1}{\abs{g_n}}\left(\vepsilon_{g_n}^{\top}\vepsilon_{g_n}+2\vepsilon_{g_n}^{\top}\vX_{g_n}(\beta_{g_n}-\hbeta_{g_n})+(\beta_{g_n}-\hbeta_{g_n})^{\top}\vX_{g_n}^{\top}\vX_{g_n}(\beta_{g_n}-\hbeta_{g_n})\right).
  \end{equation}
  Using \eqref{item:term1ha}, \eqref{item:term2ha} and \eqref{item:term4ha}
  together with the Minkowski-inequality we get that
  \begin{align}
    \norm[\big]{U_{f_n,g_n}-\sigma_{g_n}^2}_{\operatorname{L}^{2}}&\leq\norm[\big]{U_{f_n,g_n}-\sigma_{f_n}^2}_{\operatorname{L}^{2}}+\abs[\big]{\sigma_{f_n}^2-\sigma_{g_n}^2}\nonumber\\
    &\leq\norm[\big]{\tfrac{1}{\abs{f_n}}\vepsilon_{f_n}^{\top}\vepsilon_{f_n}-\sigma_{f_n}^2}_{\operatorname{L}^{2}}
      +\norm[\big]{\tfrac{2}{\abs{f_n}}\vepsilon_{f_n}^{\top}\vX_{f_n}(\beta_{f_n}-\hat{\beta}_{g_n})}_{\operatorname{L}^{2}}\nonumber\\
    &\qquad+\norm[\big]{\tfrac{1}{\abs{f_n}}(\beta_{f_n}-\hat{\beta}_{g_n})\vX_{f_n}^{\top}\vX_{f_n}(\beta_{f_n}-\hat{\beta}_{g_n})}_{\operatorname{L}^{2}}+a_n\nonumber\\
    &\leq
      \frac{C_1}{\sqrt{\abs{f_n}}}+\frac{C_2}{\sqrt{\abs{f_{n}}\abs{g_n}}}+\frac{C_3}{\sqrt{\abs{f_{n}}}}b_n+\frac{C_4}{\abs{g_{n}}}+C_5b_n^2+a_n\nonumber\\
    &\leq C\left(\frac{1}{\sqrt{r_n}}+b_n^2+a_n\right).\label{eq:upartasr}
  \end{align}
  Similarly, using \eqref{item:term1ha}, \eqref{item:term3ha} and
  \eqref{item:term5ha} we get for $V_{g_n}$ in \eqref{eq:bottom_partha} that
  \begin{align}
    \norm[\big]{V_{g_n}-\sigma_{g_n}^2}_{\operatorname{L}^{2}}
    &\leq\norm[\big]{\tfrac{1}{\abs{g_n}}\vepsilon_{g_n}^{\top}\vepsilon_{g_n}-\sigma_{g_n}^2}_{\operatorname{L}^{2}}
      +\norm[\big]{\tfrac{2}{\abs{g_n}}\vepsilon_{g_n}^{\top}\vX_{g_n}(\beta_{g_n}-\hat{\beta}_{g_n})}_{\operatorname{L}^{2}}
      +\norm[\big]{\tfrac{1}{\abs{g_n}}\vX_{g_n}^{\top}(\beta_{g_n}-\hat{\beta}_{g_n})}_{\operatorname{L}^{2}}\nonumber\\
    &\leq
      \frac{C_1}{\sqrt{\abs{g_n}}}+\frac{C_2}{\sqrt{\abs{g_n}}}+\frac{C_3}{\abs{g_n}}\nonumber\\
    &\leq \frac{C}{\sqrt{r_n}}.\label{eq:vpartasr}
  \end{align}
  Since $L^2$ convergence implies convergence in probability
  \eqref{eq:upartasr} and \eqref{eq:vpartasr} imply that
  \begin{equation*}
    U_{f_n,g_n}=\sigma_{g_n}^2+\landauOp\left(\frac{1}{\sqrt{r_n}}+b_n^2+a_n\right)
    \quad\text{and}\quad
    V_{g_n}=\sigma_{g_n}^2+\landauOp\left(\frac{1}{\sqrt{r_n}}\right)
    \quad\text{as } n\rightarrow\infty.
  \end{equation*}
  Hence, using Lemma~\ref{thm:convergence_rate_fractions} it holds that 
  \begin{equation*}
    T_{f_n,g_n}^3(\sresid)=\frac{U_{f_n,g_n}}{V_{g_n}}-1=\landauOp\left(\frac{1}{\sqrt{r_n}}+b_n^2+a_n\right)=\landauOp\left(a_n\right)+\landauOp\left(b_n^2\right)+\landauOp\left(\frac{1}{\sqrt{r_n}}\right)
  \end{equation*}
  as $n\rightarrow\infty$. This completes the first part of the proof.

  \textbf{Part 2:}\\
  Next, we prove \eqref{eq:lowerboundpart}. Since $\sigma_{g_n}^2$
  converges to a positive constant and $\omega_n$ converges to zero,
  there exists $c,C,\delta>0$ and $n_0\in\N$ such that for all
  $n\in\{n_0,n_0+1,\dots\}$ it holds
  that $$0<c+\delta<\sigma^2_{g_n}<C-\delta\quad\text{and}\quad \omega_n<1.$$ Let
  $n\in\{n_0,n_0+1,\dots\}$, then on the event
  $\{\abs{V_{g_n}-\sigma_{g_n}^2}\leq\delta\omega_n\}$ it holds that
  \begin{equation*}
    \abs{T_{f_n,g_n}^3}
    =\frac{\abs{U_{f_n,g_n}-V_{g_n}}}{\abs{V_{g_n}}}
    \geq\frac{\abs{U_{f_n,g_n}-\sigma^2_{g_n}}}{\abs{V_{g_n}}}-\frac{\abs{V_{g_n}-\sigma^2_{g_n}}}{\abs{V_{g_n}}}
    \geq\frac{\abs{U_{f_n,g_n}-\sigma^2_{g_n}}}{C}-\frac{\delta\omega_n}{c},
  \end{equation*}
  which in particular implies for all $t>0$ that
  \begin{equation*}
    \P\left(\tfrac{1}{\omega_n}\abs{T_{f_n,g_n}^3}\leq t\right)
    \leq\P\left(\tfrac{1}{\omega_n}\abs{U_{f_n,g_n}-\sigma_{g_n}^2}\leq
      Ct+\tfrac{\delta C}{c}\right)
    +\P\left(\abs{V_{g_n}-\sigma_{g_n}^2}>\delta\omega_n\right).
  \end{equation*}
  Therefore, in order to prove \eqref{eq:lowerboundpart} it is
  sufficient to show that
  $\abs{V_{g_n}-\sigma_{g_n}^2}=\landauop(\omega_n)$ and that for all
  $t\in\R$ it holds that
  \begin{equation}
    \label{eq:missingpartU}
    \lim_{n\rightarrow\infty}\P\left(\tfrac{1}{\omega_n}\abs{U_{f_n,g_n}-\sigma_{g_n}^2}\leq
      t\right)=0.
  \end{equation}
  However, by \eqref{eq:vpartasr} and the fact that $L^2$ convergence
  implies convergence in probability we already have shown that
  $\abs{V_{g_n}-\sigma^2_{g_n}}=\landauop(\omega_n)$. It thus
  remains to prove \eqref{eq:missingpartU}. To simplify the notation,
  we make the following definitions
  \begin{equation*}
    \begin{aligned}[c]
      &A_n\coloneqq
      \tfrac{1}{\abs{f_n}}\vepsilon^{\top}_{f_n}\vepsilon_{f_n}-\sigma_{g_n}^2\\
      &\tilde{A}_n\coloneqq
      \tfrac{1}{\abs{f_n}}\vepsilon^{\top}_{f_n}\vepsilon_{f_n}-\sigma_{f_n}^2
    \end{aligned}
    \qquad\qquad
    \begin{aligned}[c]
      &B_n\coloneqq
      \tfrac{2}{\abs{f_n}}\vepsilon^{\top}_{f_n}\vX_{f_n}(\beta_{f_n}-\hat{\beta}_{g_n})\\
      &C_n\coloneqq \tfrac{1}{\abs{f_n}}(\beta_{f_n}-\hat{\beta}_{g_n})^{\top}\vX_{f_n}^{\top}\vX_{f_n}(\beta_{f_n}-\hat{\beta}_{g_n}).
    \end{aligned}
  \end{equation*}
  For the proof we require two more intermediate results. Denote by
  $Z_n^A$ and $Z_n^C$ are sequences satisfying that
  $\tfrac{1}{a_n}Z_n^A\overset{\P}{\rightarrow}0$ and
  $\tfrac{1}{b_n^2}Z_n^C\overset{\P}{\rightarrow}0$ as
  $n\rightarrow\infty$. We want to show that if $\omega_n=\landauo(a_n)$
  it holds that
  \begin{equation}
    \label{eq:AnCnconvergence1}
    \lim_{n\rightarrow\infty}\P\left(\abs{A_n}\leq
      Z_n^A\right)=0
  \end{equation}
  and if $\omega_n=\landauo(b_n^2)$
  it holds that
  \begin{equation}
    \label{eq:AnCnconvergence2}
    \lim_{n\rightarrow\infty}\P\left(\abs{C_n}\leq Z_n^C\right)=0.
  \end{equation}
  Define
  $\tilde{b}_n\coloneqq\E\left(\norm{X_{f_n}(\beta_{f_n}-\beta_{g_n})}_{\R^{\abs{f_n}}}^2\right)$,
  the proof of these two results relies on the Paley-Zygmund
  inequality \citep[e.g.][Section 8.3]{weber2009} and the following four inequalities,
  \begin{multicols}{4}\raggedcolumns
    \begin{enumerate}[(1)]
    \item $\abs{\E(A_n)}=a_n$,\label{item:est1}
    \item $\E(\tilde{A}_n^2)\leq \frac{c}{r_n}$,\label{item:est2}
    \item $\E\left(C_n\right)\geq\tilde{b}_n^2-\frac{cb_n}{\sqrt{r_n}}$,\label{item:est3}
    \item $\E(C_n^2)\leq\tilde{b}_n^2+\frac{c}{\sqrt{r_n}}$.\label{item:est4}
    \end{enumerate}
  \end{multicols}
  
  We begin by proving \eqref{eq:AnCnconvergence1}. Hence, assume that
  $\omega_n=\landauo(a_n)$, for any $\theta\in[0,1]$ we can use
  \eqref{item:est1} together with Jensen's inequality and the
  Paley-Zygmund inequality to get that
  \begin{equation*}
    \P\left(\abs{A_n}>a_n\theta\right)
    \geq\P\left(\abs{A_n}>\E(\abs{A_n})\cdot\theta\right)
    \geq\frac{\E(\abs{A_n})^2}{\E(A_n^2)}(1-\theta)^2.
  \end{equation*}
  Applying Jensen's inequality once more together
  with \eqref{item:est1} and \eqref{item:est2} leads to
  \begin{align*}
    \P\left(\abs{A_n}>a_n\theta\right)
    &\geq\frac{\E(A_n)^2}{\E((\tilde{A}_n+\sigma_{f_n}^2-\sigma_{g_n}^2)^2)}(1-\theta)^2\\
    &\geq\frac{a_n^2}{\frac{c}{r_n}+a_n^2}(1-\theta)^2\\
    &=\frac{1}{1+\frac{c}{a_n^2r_n}}(1-\theta)^2.
  \end{align*}
  This implies for all $\theta\in(0,1]$ that
  \begin{align*}
    \P\left(\abs{A_n}\leq Z_n^A\right)
    &=\P\left(\abs{A_n}\leq Z_n^A,
      \tfrac{1}{a_n}Z_n^A\leq\theta\right)+\P\left(\abs{A_n}\leq Z_n^A,
      \tfrac{1}{a_n}Z_n^A>\theta\right)\\
    &\leq\P\left(\abs{A_n}\leq
      a_n\theta\right)+\P\left(\tfrac{1}{a_n}Z_n^A>\theta\right)\\
    &\leq 1-\frac{1}{1+\frac{c}{a_n^2r_n}}(1-\theta)^2+\P\left(\tfrac{1}{a_n}Z_n^A>\theta\right).\\
  \end{align*}
  Furthermore, since we assumed that
  $\tfrac{1}{a_n}Z_n^A\overset{\P}{\rightarrow}0$ and that $\omega_n=\landauo(a_n)$ this in particular implies for all
  $\theta\in(0,1]$ that
  \begin{equation*}
    \lim_{n\rightarrow\infty}\P\left(\abs{A_n}\leq Z_n^A\right)\leq 1-(1-\theta)^2,
  \end{equation*}
  and since $\theta$ can be chosen independently of $n$ we have proved that
  \begin{equation}
    \label{eq:convAndist}
    \lim_{n\rightarrow\infty}\P\left(\abs{A_n}\leq Z_n^A\right)=0.
  \end{equation}
  
  Next, we use a similar reasoning to prove
  \eqref{eq:AnCnconvergence2}. Hence, assume that
  $\omega_n=\landauo(b_n^2)$, for any $\theta\in[0,1]$ we can
  use \eqref{item:est3} together with the Paley-Zygmund inequality to get that
  \begin{equation*}
    \P\left(C_n>\left(\tilde{b}_n^2-c\tfrac{b_n}{\sqrt{r_n}}\right)\theta\right)
    \geq\P\left(C_n>\E(C_n)\cdot\theta\right)
    \geq\frac{\E(C_n)^2}{\E(C_n^2)}(1-\theta)^2.
  \end{equation*}
  Making use of \eqref{item:est3} and \eqref{item:est4} leads to
  \begin{align*}
    \P\left(C_n>\left(\tilde{b}_n^2-c\tfrac{b_n}{\sqrt{r_n}}\right)\theta\right)
    &\geq\frac{\left(\tilde{b}_n^2-c\tfrac{b_n}{\sqrt{r_n}}\right)^2}{\left(\tilde{b}_n^2+\tfrac{c}{\sqrt{r_n}}\right)^2}(1-\theta)^2\\
    &=\frac{\tilde{b}_n^4-2c\tilde{b}_n^2\tfrac{b_n}{\sqrt{r_n}}+c^2\tfrac{b_n^2}{r_n}}{\tilde{b}_n^4+2c\tfrac{\tilde{b}_n^2}{\sqrt{r_n}}+\tfrac{c^2}{r_n}}(1-\theta)^2\\
    &=\frac{1-2c\tilde{b}_n^{-2}\tfrac{b_n}{\sqrt{r_n}}+c^2\tfrac{b_n^2}{\tilde{b}_n^4r_n}}{1+2\tfrac{c}{\tilde{b}_n^2\sqrt{r_n}}+\tfrac{c^2}{\tilde{b}_n^4r_n}}(1-\theta)^2\\
    &=\frac{1-c_1\tfrac{1}{b_n\sqrt{r_n}}+c_2\tfrac{1}{b_n^2r_n}}{1+c_3\tfrac{1}{b_n^2\sqrt{r_n}}+c_4\tfrac{1}{b_n^4r_n}}(1-\theta)^2,
  \end{align*}
  where in the last step we used that there exist $c,C>0$ such that
  $c\cdot b_n\leq\tilde{b}_n\leq C\cdot b_n$.
  This implies for all $\theta\in(0,1]$ that
  \begin{align*}
    \P\left(C_n\leq Z_n^C\right)
    &=\P\left(C_n\leq Z_n^C,
      \left(\tilde{b}_n^2-c\tfrac{b_n}{\sqrt{r_n}}\right)^{-1}Z_n^C\leq\theta\right)+\P\left(C_n\leq Z_n^C,
      \left(\tilde{b}_n^2-c\tfrac{b_n}{\sqrt{r_n}}\right)^{-1}Z_n^C>\theta\right)\\
    &\leq\P\left(C_n\leq
      \left(\tilde{b}_n^2-c\tfrac{b_n}{\sqrt{r_n}}\right)\theta\right)+\P\left(\left(\tilde{b}_n^2-c\tfrac{b_n}{\sqrt{r_n}}\right)^{-1}Z_n^C>\theta\right)\\
    &\leq 1-\frac{1-c_1\tfrac{1}{b_n\sqrt{r_n}}+c_2\tfrac{1}{b_n^2r_n}}{1+c_3\tfrac{1}{b_n^2\sqrt{r_n}}+c_4\tfrac{1}{b_n^4r_n}}(1-\theta)^2+\P\left(\left(\tilde{b}_n^2-c\tfrac{b_n}{\sqrt{r_n}}\right)^{-1}Z_n^C>\theta\right).\\
  \end{align*}
  Furthermore, since we assumed that
  $\tfrac{1}{b_n^2}Z_n^C\overset{\P}{\rightarrow}0$ and that
  $\omega_n=\landauo(b_n^2)$ this in particular implies for all
  $\theta\in(0,1]$ that
  \begin{equation*}
    \lim_{n\rightarrow\infty}\P\left(C_n\leq Z_n^C\right)\leq 1-(1-\theta)^2,
  \end{equation*}
  and since $\theta$ can be chosen independently of $n$ we have proved that
  \begin{equation}
    \label{eq:convCndist}
    \lim_{n\rightarrow\infty}\P\left(C_n\leq Z_n^C\right)=0.
  \end{equation}

  Finally, we are ready to prove \eqref{eq:missingpartU}. We begin by
  observing that,
  \begin{align}
    \P\left(\tfrac{1}{\omega_n}\abs{U_{f_n,g_n}-\sigma_{g_n}^2}\leq
    t\right)
    &=\P\left(\abs{A_n+B_n+C_n}\leq t\omega_n\right)\nonumber\\
    &\leq\P\left(-t\omega_n-\abs{B_n}\leq
      A_n+C_n\leq t\omega_n+\abs{B_n}\right).\label{eq:firstestimate}
  \end{align}
  Since $\omega_n$ by definition has a slower (or equal)
  convergence rate as $\frac{1}{\sqrt{r_n}}$, we can interpret it as
  the fastest rate at which the alternatives can converge without
  loosing detectability. Keeping this intuition in mind, we
  distinguish the following 3 cases,
  \begin{enumerate}[(\text{case} 1)]
  \item variance and regression shifts are detectable, i.e. $\omega_n=\landauo(a_n)$ and
    $\omega_n=\landauo(b_n^2)$,\label{item:case1}
  \item only variance shifts are detectable, i.e. $\omega_n=\landauo(a_n)$ and
    $b_n^2=\landauO(\omega_n)$,\label{item:case2}
  \item only regression shifts are detectable, i.e. $\omega_n=\landauo(b_n^2)$ and
    $a_n=\landauO(\omega_n)$.\label{item:case3}
  \end{enumerate}

  \textbullet\, (case 1): Define
  $Z_n\coloneqq t\omega_n+\abs{B_n}$, then using
  $\omega_n=\landauo(a_n)$ and
  $\omega_n=\landauo(b_n^2)$ together with
  \eqref{item:term2ha} it holds that
  $\tfrac{1}{a_n}Z_n\overset{\P}{\rightarrow}0$ and
  $\tfrac{1}{b_n^2}Z_n\overset{\P}{\rightarrow}0$.  Hence,
  \eqref{eq:firstestimate} together with a union bound and
  \eqref{eq:convAndist} and \eqref{eq:convCndist} leads to
  \begin{align*}
    \lim_{n\rightarrow\infty}\P\left(\tfrac{1}{\omega_n}\abs{U_{f_n,g_n}-\sigma_{g_n}^2}\leq
    t\right)
    &\leq\lim_{n\rightarrow\infty}\P\left(-Z_n\leq
      A_n+C_n\leq Z_n\right)\\
    &\leq\lim_{n\rightarrow\infty}\P\left(\abs{A_n}\leq Z_n\right)+\lim_{n\rightarrow\infty}\P\left(C_n\leq Z_n\right)=0.
  \end{align*}

  \textbullet\, (case 2): Define $Z_n\coloneqq t\omega_n+\abs{B_n}+C_n$, then using
  $\omega_n=\landauo(a_n)$ and
  $b_n^2=\landauO(\omega_n)$ together with
  \eqref{item:term2ha} and \eqref{item:term4ha} it holds that
  $\tfrac{1}{a_n}Z_n\overset{\P}{\rightarrow}0$.  Hence,
  \eqref{eq:firstestimate} together with \eqref{eq:convAndist} leads
  to
  \begin{align*}
    \lim_{n\rightarrow\infty}\P\left(\tfrac{1}{\omega_n}\abs{U_{f_n,g_n}-\sigma_{g_n}^2}\leq
    t\right)
    &\leq\lim_{n\rightarrow\infty}\P\left(-Z_n\leq
      A_n\leq Z_n\right)\\
    &\leq\lim_{n\rightarrow\infty}\P\left(\abs{A_n}\leq Z_n\right)=0.
  \end{align*}

  \textbullet\, (case 3): Define $Z_n\coloneqq t\omega_n+\abs{B_n}+\abs{A_n}$, then using
  $\omega_n=\landauo(b_n^2)$ and
  $a_n=\landauO(\omega_n)$ together with
  \eqref{item:term1ha} and \eqref{item:term4ha} it holds that
  $\tfrac{1}{b_n^2}Z_n\overset{\P}{\rightarrow}0$.  Hence,
  \eqref{eq:firstestimate} together with \eqref{eq:convCndist} leads
  to
  \begin{align*}
    \lim_{n\rightarrow\infty}\P\left(\tfrac{1}{\omega_n}\abs{U_{f_n,g_n}-\sigma_{g_n}^2}\leq
    t\right)
    &\leq\lim_{n\rightarrow\infty}\P\left(-Z_n\leq
      C_n\leq Z_n\right)\\
    &\leq\lim_{n\rightarrow\infty}\P\left(C_n\leq Z_n\right)=0.
  \end{align*}
  Thus we have proved \eqref{eq:missingpartU}, which completes the
  proof of Proposition~\ref{thm:asymptotic_dist_ha}.  
\end{proof}

\subsection{Theorem~\ref{thm:consistent_test_bonferroni}}\label{sec:proof_consistent2}

The proof of Theorem~\ref{thm:consistent_test_bonferroni} is very
similar to the proof of
Theorem~\ref{thm:consistent_test}. Essentially, we use the same
methods to show that the test statistic
$T^{\max,\mathcal{F}^1(\mathcal{E}_n)}_1$ (see
\eqref{eq:teststat_beta}) is capable of detecting changes in the
regression coefficients with a rate of $b_n$ and that
$T^{\max,\mathcal{F}^1(\mathcal{E}_n)}_2$ (see
\eqref{eq:teststat_sigma}) is capable of detecting changes in the
residual variance with a rate of $a_n$. Combining both results using a
Bonferroni adjustment preserves these rates. In order to not repeat
all the details we will therefore often refer to the proof in
Section~\ref{sec:proof_consistent}.

\begin{proof}[Theorem~\ref{thm:consistent_test_bonferroni}]
  Using the same arguments and notation as in the proof of Theorem~\ref{thm:consistent_test} we can use
  Proposition~\ref{thm:asymptotic_dist_h0_tot2} and Proposition~\ref{thm:asymptotic_dist_ha2} to show that
  \begin{itemize}
  \item for $\omega_n=\landauo(b_n)$ it holds that
    \begin{equation}
      \label{eq:teststat1}
      \lim_{n\rightarrow\infty}\P\left(\varphi^*_{T^{\max,\mathcal{F}^1(\mathcal{E}_n)}_1}(\vY_n,\vX_n)=1\right)=1
    \end{equation}
  \item and for $\omega_n=\landauo(a_n)$ it holds that
    \begin{equation}
      \label{eq:teststat2}
      \lim_{n\rightarrow\infty}\P\left(\varphi^*_{T^{\max,\mathcal{F}^1(\mathcal{E}_n)}_2}(\vY_n,\vX_n)=1\right)=1.
    \end{equation}    
  \end{itemize}
  Combining these test using a Bonferroni adjustment preserves the
  consistency properties of each of the individual tests, which
  completes the proof of Theorem~\ref{thm:consistent_test_bonferroni}.
\end{proof}

\subsubsection{Intermediate results}

\begin{lemma}[representation of $T_{e_1,e_2}^1$ and $T_{e_1,e_2}^2$ for true change points]
  \label{thm:expansion_T2}
  Let $e_1,e_2\in\mathcal{E}_n(\CPt_n^*)$ then it holds that
  \begin{align*}
    T_{e_1,e_2}^1(\sresid_n)&=\frac{\norm[\big]{\hat{\beta}_{e_1}-\hat{\beta}_{e_2}}_{2}}{\norm{\sresid_n}_{2}}\quad\text{and}\\
    T_{e_1,e_2}^2(\sresid_n)&=\frac{\frac{1}{\abs{e_1}}\left(\vepsilon_{e_1}^{\top}\vepsilon_{e_1}+2\vepsilon_{e_1}^{\top}\vX_{e_1}(\beta_{e_1}-\hbeta_{e_1})+(\beta_{e_1}-\hbeta_{e_1})^{\top}\vX_{e_1}^{\top}\vX_{e_1}(\beta_{e_1}-\hbeta_{e_1})\right)}{\frac{1}{\abs{e_2}}\left(\vepsilon_{e_2}^{\top}\vepsilon_{e_2}+2\vepsilon_{e_2}^{\top}\vX_{e_2}(\beta_{e_2}-\hbeta_{e_2})+(\beta_{e_2}-\hbeta_{e_2})^{\top}\vX_{e_2}^{\top}\vX_{e_2}(\beta_{e_2}-\hbeta_{e_2})\right)}-1.
  \end{align*}
\end{lemma}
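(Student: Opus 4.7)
The proof is a direct algebraic verification in two parts; there is no genuine obstacle, so the plan is mainly to identify the right substitutions so the bookkeeping stays clean.

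The key preliminary observation is that the scaled residuals, when restricted to an environment $h$, decompose nicely in terms of the pooled OLS fit. Writing $\hat{\beta}$ for the pooled OLS coefficient obtained from regressing $\vY$ on $\vX^S$, we have $\sresid_h^S = (\vY_h - \vX_h^S\hat{\beta})/\norm{\resid^S}_2$. Plugging this into the definition $\hgamma_{h,S} = ((\vX_h^S)^{\top}\vX_h^S)^{-1}(\vX_h^S)^{\top}\sresid_h^S$ and using $(\vX_h^S)^{\top}(\vY_h - \vX_h^S\hat{\beta}) = (\vX_h^S)^{\top}\vX_h^S(\hat\beta_h - \hat\beta)$ (where $\hat\beta_h$ is the within-environment OLS coefficient) yields the central identity
\begin{equation*}
  \hgamma_{h,S} \;=\; \frac{\hat\beta_h - \hat\beta}{\norm{\resid^S}_2}.
\end{equation*}

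For the first statement, I would simply subtract the two copies of this identity for $h=e_1$ and $h=e_2$: the pooled $\hat\beta$ cancels, and taking norms gives $T^1_{e_1,e_2}(\sresid_n) = \norm{\hgamma_{e_1,S}-\hgamma_{e_2,S}}_2 = \norm{\hat\beta_{e_1}-\hat\beta_{e_2}}_2/\norm{\resid_n}_2$, as claimed.

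For the second statement, I would first simplify the numerator of $\hat{s}^2_{h,S}$. Substituting both expressions above gives
\begin{equation*}
  \sresid_h^S - \vX_h^S \hgamma_{h,S} \;=\; \frac{(\vY_h - \vX_h^S\hat{\beta}) - \vX_h^S(\hat\beta_h-\hat\beta)}{\norm{\resid^S}_2} \;=\; \frac{\vY_h - \vX_h^S\hat\beta_h}{\norm{\resid^S}_2},
\end{equation*}
so the pooled $\hat\beta$ disappears and $\hat{s}^2_{h,S} = \norm{\vY_h - \vX_h^S\hat\beta_h}_2^2/(\abs{h}\cdot\norm{\resid^S}_2^2)$. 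Next, invoking the environment-wise linear decomposition \eqref{eq:lineardecomp_env} (which relies on Assumption~\ref{assumption:asymptotic_cpmodel} and Assumption~\ref{assumption:normal}), I would substitute $\vY_h = \vX_h^S\beta_h + \vepsilon_h$ and expand the squared norm into the three cross-terms stated in the lemma. Forming the ratio $\hat{s}^2_{e_1,S}/\hat{s}^2_{e_2,S} - 1$ then cancels the common factor $\norm{\resid^S}_2^{-2}$ and leaves exactly the displayed fraction. No step requires anything beyond linear algebra and the definitions, so the main care is in keeping the normalization factors $1/|e_i|$ and $1/\norm{\resid^S}_2$ tracked correctly through the cancellations.
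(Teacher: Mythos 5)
Your proof is correct and follows essentially the same route as the paper, which disposes of this lemma by reusing the chain of substitutions from the proof of Lemma~\ref{thm:expansion_T}: cancel the pooled $\hat{\beta}$ inside $\hgamma_{h,S}$ and $\hat{s}^2_{h,S}$, then substitute the environment-wise decomposition $\vY_h=\vX_h^S\beta_h+\vepsilon_h$ from \eqref{eq:lineardecomp_env} and expand the squared norm. Note that the denominator $\norm{\resid_n}_2$ you obtain (the \emph{unscaled} residual norm) is the correct one; the lemma's displayed $\norm{\sresid_n}_2$ must be read that way, since the scaled residuals have unit norm by construction and the subsequent proofs bound this denominator away from zero as a nontrivial quantity.
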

The proof of this result is immediate using the same transformation as
in the proof of Lemma~\ref{thm:expansion_T}.

The following theorem gives the asymptotic distribution of
our test statistics under the null hypothesis $\HO$.

\begin{proposition}[asymptotic distribution under $\HO$]
  \label{thm:asymptotic_dist_h0_tot2}
  Let $(\vY_{n},\vX_{n})_{n\in\N}$ satisfy
  Assumption~\ref{assumption:asymptotic_cpmodel} and for all $n\in\N$
  satisfy $\P^{(\vY_{n},\vX_{n})}\in\HO^n$, let $\sresid_n$ be the
  scaled residuals defined in \eqref{eq:scaled_resid} corresponding to
  $(\vY_n,\vX_n)$, let
  $\mathcal{E}_n\subseteq\mathcal{P}(\{1,\dots,n\})$ be a sequence of
  collections of pairwise disjoint environments satisfying conditions
  \hyperref[item:c1]{(C1)} and \hyperref[item:c1]{(C3,k)}. Then, it
  holds for all $e_n,f_n\in\mathcal{E}_n$ that
  \begin{equation*}
    T_{e_n,f_n}^1(\sresid_n)=\landauOp\left(\frac{1}{\sqrt{r_n}}\right)
    \quad\text{and}\quad
    T^{\max,\mathcal{F}^1(\mathcal{E}_n)}_1(\sresid_n)
    =\landauOp\left(\frac{\abs{\mathcal{E}_n}^{\frac{1}{k}}}{\sqrt{r_n}}\right),
    \quad\text{as }n\rightarrow\infty,
  \end{equation*}
  as well as
  \begin{equation*}
    T_{e_n,f_n}^2(\sresid_n)=\landauOp\left(\frac{1}{\sqrt{r_n}}\right)
    \quad\text{and}\quad
    T^{\max,\mathcal{F}^1(\mathcal{E}_n)}_2(\sresid_n)
    =\landauOp\left(\frac{\abs{\mathcal{E}_n}^{\frac{1}{k}}}{\sqrt{r_n}}\right),
    \quad\text{as }n\rightarrow\infty.
  \end{equation*}
\end{proposition}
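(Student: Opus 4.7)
The plan is to mimic the structure of the proof of Proposition~\ref{thm:asymptotic_dist_h0_tot}, using Lemma~\ref{thm:expansion_T2} as the analogue of Lemma~\ref{thm:expansion_T}. Under $\HO^n$, write $\beta_{e_n}\equiv\beta_n$ and $\sigma_{e_n}^2\equiv\sigma_n^2$ for all environments $e_n\in\mathcal{E}_n$. For each fixed pair $(e_n,f_n)\in\mathcal{F}^1(\mathcal{E}_n)$, I will (i) establish pointwise $\operatorname{L}^{2k}$ bounds on the building blocks appearing in Lemma~\ref{thm:expansion_T2} of the right order, and (ii) promote these pointwise rates to the maximum over $\mathcal{F}^1(\mathcal{E}_n)$ via Chebyshev's inequality together with a union bound, exactly as in the last display of the proof of Proposition~\ref{thm:asymptotic_dist_h0_tot}: this is where the extra factor $\abs{\mathcal{E}_n}^{1/k}$ comes from.

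For $T^1_{e_n,f_n}$: from the first identity in Lemma~\ref{thm:expansion_T2} it suffices to control $\norm{\hat{\beta}_{e_n}-\hat{\beta}_{f_n}}_2^{2k}$ in $\operatorname{L}^1$, with the denominator handled by noting that $\norm{\resid_n}_2$ (or its scaled counterpart) is bounded below in probability by a positive constant after appropriate normalization. For disjoint $e_n,f_n$, independence of $\vepsilon_{e_n}$ and $\vepsilon_{f_n}$ combined with Theorem~\ref{thm:ols_convergence_strong} applied to each environment yields $\E\bigl(\norm{\hat\beta_{e_n}-\beta_n}^{2k}\bigr)\le C/\abs{e_n}^k$ (and analogously for $f_n$), so the Minkowski inequality and condition \hyperref[item:c3]{(C3,k)} give $\norm{\hat\beta_{e_n}-\hat\beta_{f_n}}_{\operatorname{L}^{2k}}\le C/\sqrt{r_n}$. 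Chebyshev at level $\abs{\mathcal{E}_n}^{1/k} r_n^{-1/2}$ combined with the union bound over the $\landauO(\abs{\mathcal{E}_n}^2)$ pairs in $\mathcal{F}^1(\mathcal{E}_n)$ then yields the claimed rate.

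For $T^2_{e_n,f_n}$: the second identity in Lemma~\ref{thm:expansion_T2} is almost identical in form to the $T^3$ expansion used in Proposition~\ref{thm:asymptotic_dist_h0_tot}, except that the residual variance estimates involve $\hat\beta_{e_n}$ in the numerator and $\hat\beta_{f_n}$ in the denominator (rather than the single $\hat\beta_{f_n}$ appearing in both). Consequently, I can recycle the three $\operatorname{L}^{2k}$ estimates (a), (b), (c) from that earlier proof essentially verbatim, treating both $U_{e_n,f_n}$ and $V_{f_n}$ symmetrically and noting that under $\HO$ each one satisfies $\norm{U_{e_n,f_n}-\sigma_n^2}_{\operatorname{L}^{2k}},\norm{V_{f_n}-\sigma_n^2}_{\operatorname{L}^{2k}}\le C/\sqrt{r_n}$. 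Plugging into Lemma~\ref{thm:convergence_rate_fractions} then produces the pointwise rate $\landauOp(1/\sqrt{r_n})$, and the same Chebyshev-plus-union-bound argument as above supplies the $\abs{\mathcal{E}_n}^{1/k}$ factor for the maximum.

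The main obstacle is a bookkeeping one rather than a conceptual one: to make the union bound tight enough to yield only an $\abs{\mathcal{E}_n}^{1/k}$ loss, one must apply Chebyshev at the $2k$-th moment (consistent with condition \hyperref[item:c3]{(C3,k)}), which in turn requires $\operatorname{L}^{2k}$ moment bounds on the cross-terms $\vepsilon_{e_n}^\top\vX_{e_n}(\beta_n-\hat\beta_{f_n})$ and $(\beta_n-\hat\beta_{f_n})^\top\vX_{e_n}^\top\vX_{e_n}(\beta_n-\hat\beta_{f_n})$. The disjoint case $e_n\cap f_n=\varnothing$ is straightforward via independence; the diagonal case $e_n=f_n$, which also appears when a component of $T^1_{e_n,e_n}$ is trivially zero and when $U_{e_n,e_n}$ reduces to a quadratic form $\vepsilon_{e_n}^\top\vP_{e_n}\vepsilon_{e_n}$, requires the projection identity and the Gaussian hypercontractivity bound of \eqref{eq:projection}, exactly as used in the proof of Proposition~\ref{thm:asymptotic_dist_h0_tot}.
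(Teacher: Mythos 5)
Your proposal is correct and follows essentially the same route as the paper: the paper's proof likewise reads off the two identities in Lemma~\ref{thm:expansion_T2}, bounds $\E\bigl(\norm{\hat\beta_{e_n}-\hat\beta_{f_n}}_2^{2k}\bigr)$ via Theorem~\ref{thm:ols_convergence_strong} for $T^1$, reuses the $\operatorname{L}^{2k}$ estimates for the $V$-type variance terms together with Lemma~\ref{thm:convergence_rate_fractions} for $T^2$, and obtains the $\abs{\mathcal{E}_n}^{1/k}$ factor for the maxima by Chebyshev at the $2k$-th moment plus a union bound. The only cosmetic difference is that for $T^{\max}_2$ the paper bounds the maximum by $\max_f V_{f}/\min_g V_{g}-1$ rather than union-bounding over all pairs, which yields the same rate.
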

A proof of this result is given in Appendix~\ref{sec:intermed_proofs2}.

Next, we give the corresponding theorem for the asymptotic distribution of
our test statistics under the alternative hypothesis $\HA$.

\begin{proposition}[asymptotic distribution under $\HA$]
  \label{thm:asymptotic_dist_ha2}
  Let $(\vY_{n},\vX_{n})_{n\in\N}$ satisfy
  Assumption~\ref{assumption:asymptotic_cpmodel} and for all $n\in\N$
  satisfy $\P^{(\vY_{n},\vX_{n})}\in\HA^{n}(a_n,b_n)$, let $\sresid_n$
  be the scaled residuals defined in \eqref{eq:scaled_resid}
  corresponding to $(\vY_n,\vX_n)$. Additionally, let
  $i,j\in\{1,\dots,L+1\}$ such that
  $a_n=\abs{\sigma^2_{e_i(\CPt^*)}-\sigma^2_{e_j(\CPt^*)}}$ and
  $b_n=\norm{\beta_{e_i(\CPt^*)}-\beta_{e_j(\CPt^*)}}_2$. Then,
  assume that $f_n\subseteq e_{i}(\CPt^*_n)$ and
  $g_n\subseteq e_{j}(\CPt^*_n)$ are sequences satisfying that for
  $e_n\in\{f_n,g_n\}$ the sequences $(\sigma_{e_n}^2)_{n\in\N}$ and
  $(\beta_{e_n})_{n\in\N}$ are convergent and the limit of
  $\sigma_{e_n}^2$ is strictly positive and the sequence
  $(\{f_n,g_n\})_{n\in\N}$ satisfies assumptions
  \hyperref[item:c1]{(C1)} and \hyperref[item:c3]{(C3,k)}.  Moreover,
  let $(\omega_n)_{n\in\N}$ be a sequence which satisfies
  $\tfrac{1}{\sqrt{r_n}}=\landauO(\omega_n)$ then if $\omega_n=\landauo(b_n)$
  it holds for all $t\geq 0$ that
  \begin{equation}
    \label{eq:lowerboundpart2a}
    \lim_{n\rightarrow\infty}\P\left(\tfrac{1}{\omega_n}\abs[\big]{T_{f_n,g_n}^1(\sresid_n)}\leq t\right)=0.
  \end{equation}
  and if $\omega_n=\landauo(a_n)$ it holds for all $t\geq 0$ that
  \begin{equation}
    \label{eq:lowerboundpart2b}
    \lim_{n\rightarrow\infty}\P\left(\tfrac{1}{\omega_n}\abs[\big]{T_{f_n,g_n}^2(\sresid_n)}\leq t\right)=0.
  \end{equation}
\end{proposition}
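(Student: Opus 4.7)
The plan is to mirror the two-part strategy used for Proposition~\ref{thm:asymptotic_dist_ha} in Appendix~C.1, but the argument is noticeably simpler because $T^1$ and $T^2$ each target only one type of deviation (coefficient shift and variance shift, respectively) rather than mixing the two. The starting point is Lemma~\ref{thm:expansion_T2}, which rewrites both statistics in terms of the per-environment OLS estimators $\hat{\beta}_{f_n},\hat{\beta}_{g_n}$, the noise $\vepsilon$, and the design $\vX$. The workhorses are the $L^2$-rate for the OLS estimator from Theorem~\ref{thm:ols_convergence_strong} together with the moment bound \hyperref[item:c3]{(C3,k)}; the Paley--Zygmund machinery from Part~2 of the proof of Proposition~\ref{thm:asymptotic_dist_ha} is \emph{not} needed, since the signal now enters linearly in the numerators rather than quadratically as for $T^3$.

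For $T^2$, Lemma~\ref{thm:expansion_T2} gives $T^2_{f_n,g_n}+1 = W_{f_n}/W_{g_n}$, where $W_h$ is the empirical squared residual appearing in the denominator of the $T^3$-expansion (with $\hat{\beta}_h$ in place of $\hat{\beta}_{g_n}$). The same $L^2$-estimates that produced inequalities (a), (b), (d) in Part~1 of the proof of Proposition~\ref{thm:asymptotic_dist_ha} immediately yield $\|W_h - \sigma_h^2\|_{L^2} = O(1/\sqrt{|h|}) = O(1/\sqrt{r_n})$ for $h\in\{f_n,g_n\}$. Combining these via Lemma~\ref{thm:convergence_rate_fractions} and the fact that $\sigma_{g_n}^2$ converges to a strictly positive limit, one obtains
\[
T^2_{f_n,g_n}(\sresid_n) = \frac{\sigma_{f_n}^2 - \sigma_{g_n}^2}{\sigma_{g_n}^2} + \landauOp\!\left(\tfrac{1}{\sqrt{r_n}}\right).
\]
Since $|\sigma_{f_n}^2-\sigma_{g_n}^2|=a_n$, the standard event-splitting argument from case~2 of Part~2 of the proof of Proposition~\ref{thm:asymptotic_dist_ha} — conditioning on whether the error term is smaller than, say, $\tfrac{1}{2}\omega_n$ — yields \eqref{eq:lowerboundpart2b} whenever $\omega_n=\landauo(a_n)$ and $1/\sqrt{r_n}=\landauO(\omega_n)$.

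For $T^1$, Lemma~\ref{thm:expansion_T2} gives $T^1_{f_n,g_n}(\sresid_n) = \|\hat{\beta}_{f_n}-\hat{\beta}_{g_n}\|_2/\|\resid_n\|_2$. Decomposing the numerator as $(\beta_{f_n}-\beta_{g_n}) + (\hat{\beta}_{f_n}-\beta_{f_n}) - (\hat{\beta}_{g_n}-\beta_{g_n})$ and invoking Theorem~\ref{thm:ols_convergence_strong} together with the reverse triangle inequality, one finds $\|\hat{\beta}_{f_n}-\hat{\beta}_{g_n}\|_2 \ge b_n - \landauOp(1/\sqrt{r_n})$. The denominator is then handled by the same per-environment expansion used for $W_h$, which shows that $\|\resid_n\|_2$ is bounded, and bounded away from zero, with probability tending to one. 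Together these give a lower bound on $T^1_{f_n,g_n}$ of order $b_n$ (up to fluctuations of order $1/\sqrt{r_n}$), and the assumption $\omega_n=\landauo(b_n)$ with $1/\sqrt{r_n}=\landauO(\omega_n)$ yields \eqref{eq:lowerboundpart2a} by the same case-splitting argument.

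The main obstacle is the two-sided control of $\|\resid_n\|_2$ appearing in the denominator of $T^1$. Under the alternative the global OLS regression of $\vY$ on $\vX^S$ is misspecified, so $(1/n)\|\resid_n\|_2^2$ contains both an i.i.d.\ noise contribution and a bias contribution coming from the heterogeneity of the $\beta_{e_i(\CPt^*_n)}$. Establishing the required lower bound amounts to showing $(1/n)\|\resid_n\|_2^2$ concentrates around a limit dominated below by $\min_i \sigma_i^2>0$, which follows from the same moment-bound arguments used for the $W_h$-estimates but has to be put in place before the final triangle-inequality step for $T^1$ goes through cleanly.
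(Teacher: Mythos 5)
Your proof is correct, and for \eqref{eq:lowerboundpart2b} it follows essentially the paper's own route: both arguments reduce $T^2_{f_n,g_n}$ to the ratio of the per-environment quantities $V_{f_n},V_{g_n}$, establish $\norm{V_h-\sigma_h^2}_{L^2}=\landauO(1/\sqrt{r_n})$ from the own-environment moment bounds (the paper's inequalities (a), (c), (e) rather than (a), (b), (d) as you cite --- a harmless labelling slip), and conclude by event-splitting using $\omega_n=\landauo(a_n)$ together with the strictly positive limit of $\sigma_{g_n}^2$. The genuine difference is in \eqref{eq:lowerboundpart2a}. The paper derives first- and second-moment bounds for $\norm{\hbeta_{f_n}-\hbeta_{g_n}}_2$ and then invokes the Paley--Zygmund inequality, mirroring the anti-concentration step it needed for $T^3$. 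You instead apply the reverse triangle inequality directly at the level of random variables, $\norm{\hbeta_{f_n}-\hbeta_{g_n}}_2\geq b_n-\norm{\hbeta_{f_n}-\beta_{f_n}}_2-\norm{\hbeta_{g_n}-\beta_{g_n}}_2$, and observe that the error terms are $\landauOp(1/\sqrt{r_n})=\landauop(b_n)$; since $b_n$ is deterministic, no anti-concentration is required and the conclusion follows from convergence in probability alone. This is a real and valid simplification --- Paley--Zygmund is only forced on the paper in the $T^3$ case, where the signal enters through a random quadratic form whose lower tail must be controlled, whereas here the signal appears linearly as a deterministic offset. The one point both treatments leave thin is the denominator of $T^1$: the paper simply asserts a constant lower bound on it, while you correctly flag it as the step needing care and sketch the concentration of the pooled residual norm above $\min_i\sigma_i^2>0$ under misspecification; your sketch is the right argument and is no less rigorous than what the paper provides.
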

A proof of this result is given in Appendix~\ref{sec:intermed_proofs2}.

\subsubsection{Proofs of intermediate results}\label{sec:intermed_proofs2}

\begin{proof}[Proposition~\ref{thm:asymptotic_dist_h0_tot2}]
  We begin with the results for the test statistic
  $T^2$. Using the notation from the proof of
  Proposition~\ref{thm:asymptotic_dist_h0_tot} and Lemma~\ref{thm:expansion_T2} it holds that
  \begin{equation*}
    T_{f_n,g_n}^2=\frac{V_{f_n}}{V_{g_n}}-1
    \quad\text{and}\quad
    \abs[\big]{T^{\max,\mathcal{F}^1(\mathcal{E}_n)}_2}\leq\abs[\bigg]{\frac{\max_{f_n\in\mathcal{E}_n}V_{f_n}}{\min_{g_n\in\mathcal{E}_n}V_{g_n}}-1}.
  \end{equation*}
  Moreover, similar computations show that
  \begin{align*}
    V_{f_n}=\sigma_n^2+\landauOp\left(r_n^{-\frac{1}{2}}\right),
    \quad
    \min_{f_n\in\mathcal{E}_n}V_{f_n}=\sigma_n^2+\landauOp\left(r_n^{-\frac{1}{2}}\right)
    \quad\text{and}\quad
    \max_{f_n\in\mathcal{E}_n}V_{f_n}=\sigma_n^2+\landauOp\left(r_n^{-\frac{1}{2}}\abs{\mathcal{E}_n}^{\frac{1}{k}}\right).
  \end{align*}
  Applying Lemma~\ref{thm:convergence_rate_fractions} proves the
  desired results.

  Next, we consider the test statistic $T^1$. Using the expansion from
  Lemma~\ref{thm:expansion_T2} together with the convergence of the
  OLS estimator (see Theorem~\ref{thm:ols_convergence_strong}) it
  holds that
  \begin{align*}
    \E\left(\left(T^1_{f_n,g_n}\right)^{2k}\right)
    &\leq\E\left(\tfrac{1}{\norm{\sresid_n}_2^{2k}}\left(\norm{\hat{\beta}_{f_n}-\beta_n}_2+\norm{\hat{\beta}_{g_n}-\beta_n}_2\right)^{2k}\right)\\
    &\leq\E\left(C_1\norm{\hat{\beta}_{f_n}-\beta_n}_2^{2k}+C_1\norm{\hat{\beta}_{g_n}-\beta_n}_2^{2k}\right)\\
    &\leq \frac{C_2}{\abs{f_n}^k}+\frac{C_2}{\abs{g_n}^k}\leq \frac{C_3}{r_n^k}.
  \end{align*}
  As in the proof of Proposition~\ref{thm:asymptotic_dist_h0_tot} we
  can now apply Chebyshev's inequality together with a union bound to also
  get that
  \begin{equation*}
    \abs{T^{\max,\mathcal{F}^1(\mathcal{E}_n)}_1}=\landauOp\left(\abs{\mathcal{E}_n}^{\frac{1}{k}}r_n^{-\frac{1}{2}}\right)\quad\text{as
    }n\rightarrow\infty.
  \end{equation*}
  This completes the proof of Proposition~\ref{thm:asymptotic_dist_h0_tot2}.
\end{proof}

\begin{proof}[Proposition~\ref{thm:asymptotic_dist_ha2}]
  We begin by proving \eqref{eq:lowerboundpart2a}. By the
  representation in Lemma~\ref{thm:expansion_T2} and since there
  exists a constant $c>0$ such that $\norm{\sresid_n}_2\geq c$ it holds
  for all $t\geq 0$ that
  \begin{equation*}
    \P\left(\frac{1}{\omega_n}\abs{T^1_{f_n,g_n}(\sresid_n)}\leq t\right)
    \leq \P\left(\norm{\hat{\beta}_{f_n}-\hat{\beta}_{g_n}}_2\leq \frac{t\omega_n}{c}\right).
  \end{equation*}
  Using the inequalities
  $\norm{\hat{\beta}_{f_n}-\hat{\beta}_{g_n}}_2\geq b_n-\norm{\hat{\beta}_{f_n}-\beta_{f_n}}_2-\norm{\hat{\beta}_{g_n}-\beta_{g_n}}_2$ and
  $\norm{\hat{\beta}_{f_n}-\hat{\beta}_{g_n}}_2\leq
  \norm{\hat{\beta}_{f_n}-\beta_{f_n}}_2+\norm{\hat{\beta}_{g_n}-\beta_{g_n}}_2+b_n$
  we can derive the following two inequalities
  \begin{equation*}
    \E\left(\norm{\hat{\beta}_{f_n}-\hat{\beta}_{g_n}}_2\right)\geq
    b_n-\frac{C_1}{\sqrt{r_n}}
    \quad\text{and}\quad
    \E\left(\norm{\hat{\beta}_{f_n}-\hat{\beta}_{g_n}}_2^2\right)\leq \frac{C_2}{r_n}+\frac{C_3b_n}{\sqrt{r_n}}+b_n^2.
  \end{equation*}
  As in the proof of Proposition~\ref{thm:asymptotic_dist_ha} we can
  apply the Paley-Zygmund inequality and use that
  $\omega_n=\landauo(a_n)$ to show that
  \begin{equation*}
    \lim_{n\rightarrow\infty}\P\left(\norm{\hat{\beta}_{f_n}-\hat{\beta}_{g_n}}_2\leq \frac{t\omega_n}{c}\right)=0.
  \end{equation*}
  This completes the proof of \eqref{eq:lowerboundpart2a}.

  Next, we prove \eqref{eq:lowerboundpart2b}. Since $\sigma_{g_n}^2$
  converges to a positive constant and $\omega_n$ converges to zero,
  there exists $c,C,\delta>0$ and $n_0\in\N$ such that for all
  $n\in\{n_0,n_0+1,\dots\}$ it holds
  that $$0<c+\delta<\sigma^2_{g_n}<C-\delta\quad\text{and}\quad \omega_n<1.$$ Let
  $n\in\{n_0,n_0+1,\dots\}$, then on the event
  $\{\abs{V_{f_n}-\sigma_{f_n}^2}\leq\delta\omega_n\}\cup\{\abs{V_{g_n}-\sigma_{g_n}^2}\leq\delta\omega_n\}$ it holds that
  \begin{equation*}
    \abs{T_{f_n,g_n}^2}
    =\frac{\abs{V_{f_n}-V_{g_n}}}{\abs{V_{g_n}}}
    \geq\frac{a_n-\abs{V_{f_n}-\sigma^2_{f_n}}-\abs{V_{g_n}-\sigma^2_{g_n}}}{\abs{V_{g_n}}}
    \geq\frac{a_n}{C}-\frac{2\delta\omega_n}{c},
  \end{equation*}
  which in particular implies for all $t\geq 0$ that
  \begin{equation*}
    \P\left(\tfrac{1}{\omega_n}\abs{T_{f_n,g_n}^2}\leq t\right)
    \leq\mathds{1}_{\left\{\frac{a_n}{\omega_n
          C}-\frac{2\delta}{c}\leq t\right\}}
    +\P\left(\abs{V_{f_n}-\sigma_{f_n}^2}\leq\delta\omega_n\right)
    +\P\left(\abs{V_{g_n}-\sigma_{g_n}^2}\leq\delta\omega_n\right).
  \end{equation*}
  In the proof of Proposition~\ref{thm:asymptotic_dist_ha} (see
  \eqref{eq:vpartasr}) we showed that $\abs{V_{f_n}-\sigma_{f_n}^2}=\landauop(\omega_n)$ and
  $\abs{V_{g_n}-\sigma_{g_n}^2}=\landauop(\omega_n)$. Therefore, using
  the assumption $\omega_n=\landauo(a_n)$ this implies that
  \begin{equation*}
    \lim_{n\rightarrow\infty}\P\left(\tfrac{1}{\omega_n}\abs{T_{f_n,g_n}^2}\leq t\right)=0,
  \end{equation*}
  which completes the proof of Proposition~\ref{thm:asymptotic_dist_ha2}.
\end{proof}

\subsection{Corollary~\ref{thm:consistent_method}}\label{sec:proof_corollary}

\begin{proof}[Corollary~\ref{thm:consistent_method}]
  Based on the empirical coverage property given in
  Proposition~\ref{thm:empirical_coverage} it holds that
  \begin{align}
    \P\left(\hat{S}(\varphi_{n,B})=\tilde{S}\right)
    &=1-\P\left(\hat{S}(\varphi_{n,B})\supsetneq\tilde{S}\right)-\P\left(\hat{S}(\varphi_{n,B})\subsetneq\tilde{S}\right)\nonumber\\
    &=\P\left(\hat{S}(\varphi_{n,B})\subseteq\tilde{S}\right)-\P\left(\hat{S}(\varphi_{n,B})\subsetneq\tilde{S}\right)\nonumber\\
    &\geq 1-\alpha-\P\left(\hat{S}(\varphi_{n,B})\subsetneq\tilde{S}\right).\label{eq:firstestimateaa}
  \end{align}
  Moreover, using the union bound we get that
  \begin{align}
    \P\left(\hat{S}(\varphi_{n,B})\subsetneq\tilde{S}\right)
    &=\P\left(\textstyle{\bigcup_{\overset{S\subsetneq\tilde{S}:}{\HOS\text{ false}}}}\left\{\varphi_{n,B}^S=0\right\}\right)\nonumber\\
    &\leq\sum_{\overset{S\subsetneq\tilde{S}:}{\HOS\text{
      false}}}\P\left(\varphi_{n,B}^S=0\right)\nonumber\\
    &\leq 2^d\cdot\max_{\overset{S\subsetneq\tilde{S}:}{\HOS\text{ false}}}\P\left(\varphi_{n,B}^S=0\right).\label{eq:secondestimatebb}
  \end{align}
  Finally, using Theorem~\ref{thm:consistent_test_bonferroni} and combining
  \eqref{eq:firstestimateaa} with \eqref{eq:secondestimatebb} it holds
  that
  \begin{equation*}
    \lim_{n\rightarrow\infty}\lim_{B\rightarrow\infty}\P\left(\hat{S}(\varphi_{n,B})=\tilde{S}\right)
    \geq 1-\alpha-\lim_{n\rightarrow\infty}\lim_{B\rightarrow\infty}\left[2^d\cdot\max_{\overset{S\subsetneq\tilde{S}:}{\HOS\text{
          false}}}\P\left(\varphi_{n,B}^S=0\right)\right]
    = 1-\alpha,
  \end{equation*}
  which completes the proof of Corollary~\ref{thm:consistent_method}.
\end{proof}

\subsection{Extension to uniform consistency}\label{sec:uniform_consistency}

As discussed in Remark~\ref{rmk:uniform_consistency} our asymptotic
consistency results can be extended to hold uniformly. The precise
statement is given in the following theorem.

\begin{theorem}[uniform rate consistency]
  \label{thm:uniform_consistent_test}
  Assume Assumption~\ref{assumption:asymptotic_cpmodel}
  and~\ref{assumption:normal}, let $S\subseteq\{1,\dots,d\}$ and let
  $(\mathcal{E}_n)_{n\in\N}$ be a sequence of collections of pairwise
  disjoint non-empty environments with the properties
  \hyperref[item:c1]{(C1)}, \hyperref[item:c2]{(C2)} and
  \hyperref[item:c3]{(C3,k)} where condition \hyperref[item:c2]{(C2)}
  is extended to ensure that the variances are uniformly bounded,
  i.e.,
  \begin{equation*}
    0<c\leq\inf_{(\vY_n,\vX_n)\in\bar{H}^{n}_{A,S}(\bar{a}_n,\bar{b}_n)}\lim_{n\rightarrow\infty}\sigma^2_{e_n}
    \leq\sup_{(\vY_n,\vX_n)\in\bar{H}^{n}_{A,S}(\bar{a}_n,\bar{b}_n)}\lim_{n\rightarrow\infty}\sigma^2_{e_n}\leq
    C<\infty
  \end{equation*}
  (the bounds in condition \hyperref[item:c3]{(C3,k)} is already
  uniform across $\bar{H}^{n}_{A,S}(\bar{a}_n,\bar{b}_n)$). Moreover,
  assume that for all $n\in\N$ it holds that
  $(\vY_n,\vX_n)\sim P_n\in\bar{H}_{A,S}^n(\bar{a}_n,\bar{b}_n)$, where
  $\bar{a}_n$ and $\bar{b}_n$ satisfy the following condition
  \begin{equation*}
    \frac{\abs{\mathcal{E}_n}^{\tfrac{1}{k}}}{\sqrt{r_n}}=\landauo(\bar{a}_n)
    \quad\text{or}\quad
    \frac{\abs{\mathcal{E}_n}^{\tfrac{1}{k}}}{\sqrt{r_n}}=\landauo(\bar{b}_n^2).
  \end{equation*}
  Then it holds that
  \begin{equation*}
    \lim_{n\rightarrow\infty}\lim_{B\rightarrow\infty}\inf_{P_n\in\bar{H}^{n}_{A,S}(\bar{a}_n,\bar{b}_n)}\P_{P_n}\left(\varphi_{B}(\vY_n,\vX_n^S)=1\right)=1,
  \end{equation*}
  where $\varphi$ is either the combined or the decoupled test.
\end{theorem}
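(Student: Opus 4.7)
The plan is to inspect the two non-uniform proofs (Theorems~\ref{thm:consistent_test_bonferroni} and~\ref{thm:consistent_test}) step by step and argue that every constant that appears can be chosen independently of the particular $P_n \in \bar{H}^{n}_{A,S}(\bar{a}_n,\bar{b}_n)$. The two ingredients that need to become uniform are (i) the critical value, which is controlled by an $L^{2k}$ bound on $T^{\max,\mathcal{F}^1(\mathcal{E}_n)}$ under the null structure, and (ii) the divergence rate of $|T^3_{f_n,g_n}|/\omega_n$ at the sub-environments delivered by the strengthened (C2), which comes from a Paley--Zygmund argument. Both ingredients are then fed into the inequality $\P(\varphi_B = 1) \geq \P(|T^3_{f_n,g_n}| > M_\alpha)$ exactly as in the original proof.

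For the critical value, I would re-examine items (a)--(c) of the proof of Proposition~\ref{thm:asymptotic_dist_h0_tot}. The constants $C_1,C_2,C_3$ there depend only on $\sup_n \sigma_n^2$, on the moments of $\lambda_{\max}\bigl(\tfrac{1}{|e_n|}\vX_{e_n}^\top\vX_{e_n}\bigr)$, and on the fourth-moment bounds supplied by Theorem~\ref{thm:ols_convergence_strong}. Under the strengthened (C2) and the (already uniform) (C3,k), these quantities are uniformly bounded across $\bar{H}^{n}_{A,S}(\bar{a}_n,\bar{b}_n)$, so $\norm{U_{e_n,f_n}-\sigma_n^2}_{L^{2k}} \leq C/\sqrt{r_n}$ holds with a single $C$. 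A Chebyshev union bound then yields, for every $\epsilon > 0$, a single $M_\epsilon$ such that $\sup_{P_n} \P_{P_n}(|T_n| > M_\epsilon) < \epsilon$, and hence a uniform upper bound $F^{-1}_{T_n}(1-\alpha) \leq M_\alpha$ valid for all large $n$ and all $P_n$ in the class.

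For the alternative, at the pair $(f_n,g_n)$ chosen by (C2) we have $\sigma^2_{f_n}, \sigma^2_{g_n} \in [c,C]$ uniformly. The inequalities
\begin{align*}
\P\bigl(\abs{A_n} > a_n \theta\bigr) &\geq \frac{a_n^2}{a_n^2 + c'/r_n}(1-\theta)^2, \\
\P\bigl(C_n > (\tilde b_n^2 - c'b_n/\sqrt{r_n})\theta\bigr) &\geq \frac{1 - c_1/(b_n\sqrt{r_n}) + c_2/(b_n^2 r_n)}{1 + c_3/(b_n^2\sqrt{r_n}) + c_4/(b_n^4 r_n)}(1-\theta)^2,
\end{align*}
derived in the proof of Proposition~\ref{thm:asymptotic_dist_ha}, involve constants $c_i$ that inherit uniformity from the same moment and variance bounds. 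Since $a_n \geq \bar{a}_n$ and $b_n \geq \bar{b}_n$ by definition of $\bar{H}^{n}_{A,S}$, and since $\omega_n = \landauo(\bar{a}_n)$ or $\omega_n = \landauo(\bar{b}_n^2)$ by hypothesis, the right-hand sides tend to $(1-\theta)^2$ uniformly over the class. The three-case combination at the end of Part~2 of that proof then gives $\sup_{P_n} \P_{P_n}(\omega_n^{-1}|T^3_{f_n,g_n}| \leq t) \to 0$ for every $t$, and the same argument applied to Proposition~\ref{thm:asymptotic_dist_ha2} delivers the analogue for the decoupled test, which a Bonferroni correction preserves.

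The main obstacle is the uniformity of the Paley--Zygmund lower bound: the ratio $\E(|A_n|)^2/\E(A_n^2)$ (and its $C_n$-analogue) must be bounded below by a constant independent of $P_n$, and this fails if $\sigma^2_{e_n}$ or the eigenvalues of the Gram matrix are allowed to drift to $0$ or $\infty$ along sequences within the class. The strengthened version of (C2) stated in Remark~\ref{rmk:uniform_consistency}, together with the already uniform moment condition (C3,k), is precisely what is needed to exclude such degeneracies; everything else reduces to bookkeeping of constants that are already explicit in the original proofs.
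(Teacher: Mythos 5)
Your proposal is correct and follows essentially the same route as the paper: the paper's own argument is simply to rerun the proofs of Theorems~\ref{thm:consistent_test_bonferroni} and~\ref{thm:consistent_test} while checking that every constant in Propositions~\ref{thm:asymptotic_dist_h0_tot}--\ref{thm:asymptotic_dist_ha2} (in particular in the inequalities feeding the Paley--Zygmund step) can be chosen uniformly over $\bar{H}^{n}_{A,S}(\bar{a}_n,\bar{b}_n)$, which is exactly what the strengthened (C2) and the already-uniform (C3,k) deliver. If anything, you are more explicit than the paper about the uniformity of the critical-value bound $M_\alpha$ on the null side, which the paper leaves implicit.
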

The proof is a straight forward extension of the proofs given in
Sections~\ref{sec:proof_consistent} and~\ref{sec:proof_consistent2}.
We illustrate the changes only for the combined test. Similar
arguments can be applied to the decoupled test. In order to prove the
result for the combined test we can use the exact same proof as for
Theorem~\ref{thm:consistent_test} with the exception that we need to
strength the result from Proposition~\ref{thm:asymptotic_dist_ha}. In
particular we need to extend equation \eqref{eq:lowerboundpart} in the
following way,
\begin{equation*}
  \lim_{n\rightarrow\infty}\sup_{P_n\in\bar{H}^{n}_{A,S}(\bar{a}_n,\bar{b}_n)}\P\left(\tfrac{1}{\omega_n}\abs[\big]{T_{f_n,g_n}^3(\sresid_n)}\leq t\right)=0.
\end{equation*}
This can be accomplished by ensuring that all constants appearing in
the proof of Proposition~\ref{thm:asymptotic_dist_ha} hold uniformly
across all potential alternatives in
$\bar{H}^{n}_{A,S}(\bar{a}_n,\bar{b}_n)$. In particular, we need
to make sure this holds for all constants appearing in the inequalities
\eqref{item:term1ha}, \eqref{item:term2ha}, \eqref{item:term3ha},
\eqref{item:term4ha} and \eqref{item:term5ha}.  This is, however,
immediate given the additional assumption
\begin{equation*}
  0<c\leq\inf_{P_n\in\bar{H}^{n}_{A,S}(\bar{a}_n,\bar{b}_n)}\lim_{n\rightarrow\infty}\sigma^2_{e_n}
  \leq\sup_{P_n\in\bar{H}^{n}_{A,S}(\bar{a}_n,\bar{b}_n)}\lim_{n\rightarrow\infty}\sigma^2_{e_n}\leq
  C<\infty.
\end{equation*}

\subsection{Proposition~\ref{thm:srtest_level2}}\label{sec:tslevelproof}

\begin{proof}
  Throughout the proof we use the notation
  $\vZ\coloneqq(Z_{p+1},Z_{p+2},\dots,Z_n)^{\top}\in\R^{(n-p)\times
    (d+p(d+1))}$ where
  $Z_t=(X_t,Y_{t-1},X_{t-1}\dots,Y_{t-p},X_{t-p})$. For a fixed,
  significance level $\alpha\in(0,1)$ we construct our test for the
  time series setting as follows. Let
  $\snoise_1,\snoise_2,\dots\iid\mathcal{N}(\mathbf{0},\vI_{n-p})$,
  then given that $(\vY,\vX)$ satisfies $\P^{(\vY,\vX)}\in\tildeHOS$
  it holds for all $\vz\in\R^{(n-p)\times (d+p(d+1))}$ and for all
  $i\in\N$ that the random variables defined by
  \begin{equation*}
    \sresid_i^{S,p,\vz}\coloneqq\frac{(\vI-\vP^{S,p}_{\vz})\snoise_i}{\norm{(\vI-\vP^{S,p}_{\vz})\snoise_i}_2},
  \end{equation*}
  are i.i.d. copies of $\sresid^{S,p}\mid\vZ=\vz$, where
  $\sresid^{S,p}$ are the scaled residuals defined in
  \eqref{eq:scaled_residZ}. To see this, use the properties of the
  projection matrix $\vP^{S,p}_{\vZ}$ and the fact that
  $\vY=\vZ^{S,p}\eta+\noise$ for some
  $\eta\in\R^{(\abs{S}+p(d+1))\times 1}$.

  Next, for all $B\in\N$ define the cut-off functions
  $c_{T,B}:\R^{(n-p)\times (d+p(d+1))}\rightarrow\R$ given for all
  $\vz\in\R^{(n-p)\times (d+p(d+1))}$ by
  \begin{equation*}
    c_{T,B}(\vz)\coloneqq\lceil B(1-\alpha)\rceil\text{-largest value
      of } \abs{T(\sresid_1^{S,p,\vz})},\dots,\abs{T(\sresid_B^{S,p,\vz})}.
  \end{equation*}
  Then, our hypothesis tests $(\varphi^{S,p}_{T,B})_{B\in\N}$ are defined for all
  $B\in\N$ by
  \begin{equation*}
    \varphi^{S,p}_{T,B}(\vY,\vX)\coloneqq\mathds{1}_{\{\abs{T(\sresid^{S,p})}>c_{T,B}(\vZ)\}}.
  \end{equation*}
  Using the well established fact \citep[see e.g.][Example
  11.2.13]{lehmann2005} that the quantiles of the empirical
  distribution converge to the quantiles of the true distribution, we
  get $\P$-a.s. for all $\vz\in\R^{(n-p)\times (d+p(d+1))}$ that
  \begin{equation*}
    \lim_{B\rightarrow\infty}c_{T,B}(\vz)=F_{T\left(\sresid^{S,p,\vz}_1\right)}^{-1}(1-\alpha),
  \end{equation*}
  where $F_{T(\sresid^{S,p,\vz}_1)}^{-1}$ is the quantile function of
  the random variable $T(\sresid^{S,p,\vz}_1)$.  Hence, conditioning on
  $\vZ$ leads to
  \begin{align*}
    \lim_{B\rightarrow\infty}\P\left(\varphi^{S,p}_{T,B}(\vY,\vX)=1\right)
    &=\E\left(\E\left(\lim_{B\rightarrow\infty}\varphi^{S,p}_{T,B}(\vY,\vX)\big\rvert\vZ\right)\right)\\
    &=\E\left(\E\left(\mathds{1}_{\{\abs{T(\sresid^{S,p})}>F_{T(\sresid^{S,p,\vZ}_1)}^{-1}(1-\alpha)\}}\Big\rvert\vZ\right)\right)\\
    &=\alpha,
  \end{align*}
  which completes the proof of Proposition~\ref{thm:srtest_level2}.
\end{proof}

\pagebreak
\section{Auxiliary results}\label{sec:auxiliary}

\begin{theorem}[convergence of OLS-estimates (random design)]
  \label{thm:ols_convergence_strong}
  Let
  $(Y_{n,i},X_{n,i})_{i\in\{1,\dots,n\},n\in\N}\subseteq\R\times\R^d$,
  $(\epsilon_{n,i})_{i\in\{1,\dots,n\},n\in\N}\subseteq\R$,
  $(\sigma_n)_{n\in\N}\subseteq\R_{>0}$ and
  $(\beta_n)_{n\in\N}\subseteq\R$ satisfy for all $n\in\N$ and for all
  $i\in\{1,\dots,n\}$ that
  \begin{equation*}
    Y_{n,i}=\beta_n X_{n,i}+\epsilon_{n,i}\quad\text{and}\quad
    \epsilon_{n,i}\independent X_{n,i},
  \end{equation*}
  where $\vepsilon_n\sim\mathcal{N}(\vNull,\sigma_n^2\vI)$ and $\frac{1}{n}\vX_n^{\top}\vX_n$
  is $\P$-a.s. invertible for $n$ sufficiently large. Additionally,
  let $k\in\N$ and assume there exists a constant $c>0$ such that for
  all $n\in\N$ it holds that
  \begin{equation*}
    c\leq\E\left(\abs[\big]{\lambda_{\min}(\tfrac{1}{n}\vX_n^{\top}\vX_n^{\top})}^k\right)<\infty.
  \end{equation*}
  Moreover, let $\hbeta_n$ denote the OLS-estimator of
  $\beta_n$ based on
  $(Y_{n,1},X_{n,1}),\dots,(Y_{n,n},X_{n,n})$. Then, for all $n\in\N$
  there exists a constant $C_1,C_2>0$ (depending on $k$) such that
  \begin{equation}
    \label{eq:twoinequalities}
    \E\left(\norm{\hbeta_n-\beta_n}_{\R^d}^{2k}\right)\leq
    \frac{C_2}{n^k}.    
    \quad\text{and}\quad
    \E\left(\norm{\vX_n(\hbeta_n-\beta_n)}_{\R^n}^{2k}\right)\leq C_1
  \end{equation}
  In particular, it holds for all $p\in\{1,\dots,2k\}$ that
  \begin{equation*}
    \norm{\hbeta_n-\beta_n}_{\operatorname{L}^p}=\landauOLp\left(\tfrac{1}{\sqrt{n}}\right)\quad\text{as
    } n\rightarrow\infty.
  \end{equation*}
\end{theorem}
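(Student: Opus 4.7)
The plan is to reduce both inequalities to computations of Gaussian moments conditional on $\vX_n$, exploiting the fact that $\hat\beta_n - \beta_n = (\vX_n^\top \vX_n)^{-1} \vX_n^\top \vepsilon_n$ is linear in the noise $\vepsilon_n$ and thus conditionally Gaussian. The two inequalities are then handled by separate routes: the first via a projection argument, the second via a trace bound on the conditional covariance.

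For the first inequality, observe that $\vX_n(\hat\beta_n - \beta_n) = \vP_n \vepsilon_n$, where $\vP_n = \vX_n(\vX_n^\top \vX_n)^{-1}\vX_n^\top$ is the orthogonal projection onto the column space of $\vX_n$. Since $\frac{1}{n}\vX_n^\top\vX_n$ is $\P$-a.s.\ invertible for large $n$, $\vP_n$ has rank $d$ a.s. Conditionally on $\vX_n$, the quadratic form $\|\vP_n \vepsilon_n\|_{\R^n}^2 = \vepsilon_n^\top \vP_n \vepsilon_n$ is distributed as $\sigma_n^2 \chi^2_d$, so its $k$-th moment equals $\sigma_n^{2k}\,\E(\chi^{2k}_d)$, a constant depending only on $k$, $d$, and $\sigma_n$. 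Taking expectations gives the first inequality with $C_1$ independent of $n$.

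For the second inequality, condition on $\vX_n$ so that $\hat\beta_n - \beta_n \sim \mathcal{N}\!\left(0,\,(\sigma_n^2/n)\vM_n^{-1}\right)$, where $\vM_n \coloneqq \frac{1}{n}\vX_n^\top \vX_n$. Using the standard Gaussian moment bound $\E\|Z\|_{\R^d}^{2k} \leq c_k (\operatorname{tr}\Sigma)^k$ for $Z \sim \mathcal{N}(0, \Sigma)$ (obtained by diagonalizing $\Sigma$ and applying the moment formula for $\chi^2$ together with Jensen's inequality), I would derive
\begin{equation*}
\E\!\left(\|\hat\beta_n - \beta_n\|_{\R^d}^{2k}\,\big|\,\vX_n\right)
\leq \frac{c_k \sigma_n^{2k}}{n^k}\,\bigl(\operatorname{tr}(\vM_n^{-1})\bigr)^k
\leq \frac{c_k d^k \sigma_n^{2k}}{n^k}\,\lambda_{\min}(\vM_n)^{-k}.
\end{equation*}
Taking expectations then yields the claim provided $\E\bigl(\lambda_{\min}(\vM_n)^{-k}\bigr)$ is bounded uniformly in $n$. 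The $L^p$-rate statement for $p \in \{1,\dots,2k\}$ follows immediately from the second inequality by monotonicity of $L^p$ norms.

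The main obstacle is the final step, namely controlling $\E(\lambda_{\min}(\vM_n)^{-k})$. The hypothesis as stated only bounds the $k$-th positive moment of $\lambda_{\min}(\vM_n)$ away from $0$ and $\infty$, which does not in general control inverse moments. One would need to strengthen this by invoking the multivariate normality of the rows of $\vX_n$ ensured by Assumption~\ref{assumption:normal}: for Gaussian design, $\vX_n^\top \vX_n$ is Wishart, and classical small-ball/tail estimates for the smallest eigenvalue of Wishart matrices (for $n$ sufficiently larger than $d$) provide exponential tail bounds, hence finite negative moments of all orders uniformly in $n$. This is the technically delicate part; the remainder of the argument is a straightforward conditional Gaussian computation.
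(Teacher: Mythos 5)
Your proposal is correct and follows essentially the same route as the paper: write $\hbeta_n-\beta_n=(\vX_n^{\top}\vX_n)^{-1}\vX_n^{\top}\vepsilon_n$, bound $\E\bigl(\norm{\vP_n\vepsilon_n}^{2k}\mid\vX_n\bigr)$ by a constant via the rank-$d$ projection (the paper expands in an orthonormal basis rather than naming the $\chi^2_d$ law, but it is the same computation), and reduce the $\norm{\hbeta_n-\beta_n}^{2k}$ bound to a spectral factor involving $\lambda_{\min}\bigl(\tfrac{1}{n}\vX_n^{\top}\vX_n\bigr)^{-k}$. The one point you flag as delicate is exactly where the paper is also loose: its final step asserts $\E\bigl(\lambda_{\max}\bigl((\tfrac{1}{n}\vX_n^{\top}\vX_n)^{-1}\bigr)^k\bigr)\le 1/c$ directly from the hypothesis $\E\bigl(\lambda_{\min}(\tfrac{1}{n}\vX_n^{\top}\vX_n)^k\bigr)\ge c$, which, as you observe, does not follow (Jensen gives the reverse inequality for inverse moments). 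So you have not missed an idea that the paper supplies; the hypothesis is evidently intended to control the negative moment $\E\bigl(\lambda_{\min}(\tfrac{1}{n}\vX_n^{\top}\vX_n)^{-k}\bigr)$, and your proposed repair via Wishart small-ball/tail estimates under the Gaussian design assumption is a legitimate way to justify that bound where the theorem is actually applied.
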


\begin{proof}
  First, note that given the above assumptions the OLS-estimator can
  be expressed as
  \begin{equation}
    \label{eq:hbetambeta}
    \hbeta_n
    =(\vX_n^{\top}\vX_n)^{-1}\vX_n^{\top}\vY_n
    =\beta_n+\left(\vX_n^{\top}\vX_n\right)^{-1}\vX_n^{\top}\vepsilon_n.
  \end{equation}
  Using this combined with a spectral estimate we get that,
  \begin{align}
    \E\left(\norm{\hbeta_n-\beta_n}^{2k}_{\R^d}\right)
    &=\E\left(\norm[\big]{\left(\vX_n^{\top}\vX_n\right)^{-1}\vX_n^{\top}\vepsilon_n}^{2k}_{\R^d}\right)\nonumber\\
    &=\E\left(\left(\vepsilon_n^{\top}\vX_n\left(\vX_n^{\top}\vX_n\right)^{-1}\left(\vX_n^{\top}\vX_n\right)^{-1}\vX_n^{\top}\vepsilon_n\right)^k\right)\nonumber\\
    &\leq\E\left(\left(\lambda_{\max}\left(\left(\vX_n^{\top}\vX_n\right)^{-1}\right)\vepsilon_n^{\top}\vX_n\left(\vX_n^{\top}\vX_n\right)^{-1}\vX_n^{\top}\vepsilon_n\right)^k\right)\nonumber\\
    &=\E\left(\lambda_{\max}\left(\left(\vX_n^{\top}\vX_n\right)^{-1}\right)^k\norm[\big]{\vP_n\vepsilon_n}_{\R^n}^{2k}\right),\label{eq:expansion}
  \end{align}
  where
  $\vP_n\coloneqq\vX_n\left(\vX_n^{\top}\vX_n\right)^{-1}\vX_n^{\top}$
  is the projection matrix onto the column space of $\vX_n$. Let
  $\vV_n$ be a matrix where all columns together form an orthonormal basis of
  $\R^n$ and the first $d$ columns form an orthonormal basis of
  the column space of $\vX_n$. Then it in particular holds that
  \begin{equation*}
    \vP_n=\vV_n\vD_n\vV_n^{\top},
    \quad\text{with}\quad
    \vD_n=
    \begin{pmatrix}
      \vI_{d} &\vNull\\
      \vNull &\vNull
    \end{pmatrix}
    \in\R^{n\times n}.
  \end{equation*}
  Moreover, the orthogonality of $\vV_n$ implies that the vector $\vepsilon^*_n\coloneqq
  \vV_n^{\top}\vepsilon_n$ is again $\mathcal{N}(\vNull,\sigma_n^2\vI)$
  distributed. We therefore get that
  \begin{align}
    \E\left(\left.\norm[\big]{\vP_n\vepsilon_n}_{\R^n}^{2k}\right\rvert
    \vX_n\right)
    &=\E\left(\left.\left[\vepsilon_n^{\top}\vV_n\vD_n\vV_n^{\top}\vepsilon_n\right]^k\right\rvert
      \vX_n\right)\nonumber\\
    &=\E\left(\left.\left[\sum_{i=1}^d(\vV_n^{\top}\vepsilon_n)^2_i\right]^k\right\rvert
      \vX_n\right)\nonumber\\
    &=\sum_{i_1,\dots,i_k=1}^d\E\left(\left.(\vepsilon_{i_1}^{*})^2\cdots(\vepsilon_{i_k}^{*})^2\right\rvert
      \vX_n\right)\nonumber\\
    &\leq d^kC_k,\label{eq:projection}
  \end{align}
  where in the last step we used that all moments of a normal distribution up to a fixed order
  $2k$ can be bounded by a constant $C_k$. Hence, combining
  \eqref{eq:expansion} and \eqref{eq:projection} and using the
  properties of the conditional expectation we get that
  \begin{align*}
    \E\left(\norm{\hbeta_n-\beta_n}^{2k}_{\R^d}\right)
    &=\E\left(\lambda_{\max}\left(\left(\vX_n^{\top}\vX_n\right)^{-1}\right)^k\E\left(\left.\norm[\big]{\vP_n\vepsilon_n}_{\R^n}^{2k}\right\rvert
      \vX_n\right)\right)\\
    &\leq\E\left(\lambda_{\max}\left(\left(\vX_n^{\top}\vX_n\right)^{-1}\right)^kd^kC_k\right)\\
    &=\E\left(\lambda_{\max}\left(\left(\tfrac{1}{n}\vX_n^{\top}\vX_n\right)^{-1}\right)^k\right)\frac{d^kC_k}{n^k}\\
    &\leq\frac{d^kC_k}{cn^k}.
  \end{align*}
  This proves the first inequality in \eqref{eq:twoinequalities}.

  Next, observe that again by \eqref{eq:hbetambeta} it holds that
  \begin{align}
    \E\left(\norm{\vX_n(\hbeta_n-\beta_n)}^{2k}_{\R^d}\right)
    &=\E\left(\norm[\big]{\vX_n\left(\vX_n^{\top}\vX_n\right)^{-1}\vX_n^{\top}\vepsilon_n}^{2k}_{\R^d}\right)\nonumber\\
    &=\E\left(\norm[\big]{\vP_n\vepsilon_n}_{\R^n}^{2k}\right).\label{eq:expansion2}
  \end{align}
  Thus, combining \eqref{eq:expansion2} with \eqref{eq:projection}
  proves the second inequality in \eqref{eq:twoinequalities}.
  
  The last part of the theorem is then an immediate consequence of Jensen's
  inequality, which thus completes the proof of
  Theorem~\ref{thm:ols_convergence_strong}.
\end{proof}

\begin{lemma}[convergence rate of fractions]
  \label{thm:convergence_rate_fractions}
  Let $(X_n)_{n\in\N},(Y_n)_{n\in\N}\subseteq\R$ be two sequences of
  random variables which satisfy that
  \begin{equation*}
    X_n=x_n+\landauOp(a_n)
    \quad\text{and}\quad
    Y_n=y_n+\landauOp(a_n),
  \end{equation*}
  where $(x_n)_{n\in\N}$, $(y_n)_{n\in\N}$ and $(a_n)_{n\in\N}$ are
  strictly positive, deterministic and convergent sequences satisfying
  that $\lim_{n\rightarrow\infty}x_n>0$,
  $\lim_{n\rightarrow\infty}y_n>0$ and $\lim_{n\rightarrow\infty}a_n=0$.
  Then, it holds that
  \begin{equation*}
    \frac{X_n}{Y_n}=\frac{x_n}{y_n}+\landauOp(a_n).
  \end{equation*} 
\end{lemma}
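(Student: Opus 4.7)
The plan is to reduce the fraction to a difference that can be controlled by the given $\landauOp(a_n)$ rates. I would start from the algebraic identity
\begin{equation*}
\frac{X_n}{Y_n}-\frac{x_n}{y_n}
=\frac{X_n y_n - x_n Y_n}{y_n Y_n}
=\frac{y_n(X_n-x_n)-x_n(Y_n-y_n)}{y_n Y_n}.
\end{equation*}
Because $(x_n)_{n\in\N}$ and $(y_n)_{n\in\N}$ are convergent (hence bounded) deterministic sequences, and since $X_n-x_n=\landauOp(a_n)$ and $Y_n-y_n=\landauOp(a_n)$, the numerator is $\landauOp(a_n)$ by the triangle inequality and the fact that multiplying an $\landauOp(a_n)$ term by a bounded deterministic factor preserves the rate.

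The main obstacle is handling the denominator $y_n Y_n$: I need $1/(y_n Y_n)=\landauOp(1)$, which requires showing that $Y_n$ stays bounded away from zero with high probability. For this I would fix $\epsilon>0$ and, using $Y_n-y_n=\landauOp(a_n)$, choose $M_\epsilon>0$ and $N_\epsilon\in\N$ such that $\P(|Y_n-y_n|>M_\epsilon a_n)<\epsilon$ for all $n\geq N_\epsilon$. Setting $y^\star\coloneqq\lim_{n\to\infty}y_n>0$, I can enlarge $N_\epsilon$ so that for all $n\geq N_\epsilon$ both $y_n\geq y^\star/2$ and $M_\epsilon a_n\leq y^\star/4$ hold (using $a_n\to 0$). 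On the event $\{|Y_n-y_n|\leq M_\epsilon a_n\}$ this gives $Y_n\geq y^\star/4$, and therefore
\begin{equation*}
\P\!\left(\frac{1}{y_n Y_n}>\frac{8}{(y^\star)^2}\right)<\epsilon
\qquad\text{for all }n\geq N_\epsilon,
\end{equation*}
which is exactly $1/(y_n Y_n)=\landauOp(1)$.

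Finally I would combine the two bounds: the product of an $\landauOp(a_n)$ sequence with an $\landauOp(1)$ sequence is $\landauOp(a_n)$ by a standard union-bound argument on the defining events, which yields
\begin{equation*}
\frac{X_n}{Y_n}-\frac{x_n}{y_n}=\landauOp(a_n),
\end{equation*}
as claimed. The only non-routine step is the control of $1/Y_n$; everything else is algebra plus the elementary calculus of the $\landauOp$ symbol. I do not expect any complication from the joint dependence between $X_n$ and $Y_n$, since all the estimates are in probability and factor through triangle and union bounds.
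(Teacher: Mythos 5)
Your proposal is correct and follows essentially the same route as the paper's proof: the same decomposition of $\frac{X_n}{Y_n}-\frac{x_n}{y_n}$ into a numerator controlled by the triangle inequality and a denominator controlled by showing $Y_n$ stays bounded away from zero with high probability (via $a_n\to 0$ and $y_n\to y^\star>0$), followed by a union bound. The only difference is presentational — you factor the argument through explicit $\landauOp$-calculus steps while the paper carries out one inline chain of probability inequalities — so there is nothing substantive to add.
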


\begin{proof}
  Fix $\epsilon>0$, by assumption there exist $M_1,M_2>0$ and
  $n_0\in\N$ such that
  for all $n\in\{n_0,n_0+1,\dots\}$ it holds that
  \begin{equation}
    \label{eq:xnynbounded}
    \P\left(\tfrac{1}{a_n}\abs{X_n-x_n}>M_1\right)<\frac{\epsilon}{3}
    \quad\text{and}\quad
    \P\left(\tfrac{1}{a_n}\abs{Y_n-y_n}>M_2\right)<\frac{\epsilon}{3}.
  \end{equation}
  Using the convergence of the sequences $(x_n)_{n\in\N}$ and
  $(y_n)_{n\in\N}$, there exists $x^*,y^*>0$ and
  $n_1\in\{n_0,n_0+1\dots\}$ such that for all
  $n\in\{n_1,n_1+1,\dots\}$ it holds that $y^*<y_n$ and $x_n<x^*$.
  Moreover, fix $0<\delta<y^*$, then since
  $\lim_{n\rightarrow\infty}a_n=0$ there exists
  $n_2\in\{n_1,n_1+1,\dots\}$ such that for all
  $n\in\{n_2,n_2+1,\dots\}$ it holds that $a_nM_2<\delta$ and thus in
  particular
  \begin{equation}
    \label{eq:ynconvergent}
    \P\left(\abs{Y_n-y_n}>\delta\right)<\frac{\epsilon}{3}.
  \end{equation}
  Next, observe that for any $M>0$ it holds that
  \begin{align}
    &\P\left(\frac{1}{a_n}\abs[\bigg]{\frac{X_n}{Y_n}-\frac{x_n}{y_n}}>M\right)\nonumber\\
    &\quad=\P\left(\frac{1}{a_n}\abs[\bigg]{\frac{X_ny_n-Y_nx_n}{Y_ny_n}}>M,\abs{Y_n-y_n}\leq\delta\right)
      +\P\left(\frac{1}{a_n}\abs[\bigg]{\frac{X_n}{Y_n}-\frac{x_n}{y_n}}>M,\abs{Y_n-y_n}>\delta\right)\nonumber\\
    &\quad\leq\P\left(\frac{1}{a_n}\frac{\abs{X_ny_n-Y_nx_n}}{(y_n-\delta)y_n}>M\right)
      +\P\left(\abs{Y_n-y_n}>\delta\right)\nonumber\\
    &\quad=\P\left(\tfrac{1}{a_n}\abs{X_n-Y_n\tfrac{x_n}{y_n}}>M(y_n-\delta)\right)
      +\P\left(\abs{Y_n-y_n}>\delta\right)\nonumber\\
    &\quad\leq\P\left(\tfrac{1}{a_n}\abs{X_n-x_n}+\tfrac{1}{a_n}\abs{Y_n\tfrac{x_n}{y_n}-x_n}>M(y_n-\delta)\right)
      +\P\left(\abs{Y_n-y_n}>\delta\right)\nonumber\\
    &\quad\leq\P\left(\tfrac{1}{a_n}\abs{X_n-x_n}>\tfrac{M}{2}(y_n-\delta)\right)+\P\left(\tfrac{1}{a_n}\abs{Y_n-y_n}>\tfrac{My_n}{2x_n}(y_n-\delta)\right)
      +\P\left(\abs{Y_n-y_n}>\delta\right).\label{eq:xnynestimate}
  \end{align}
  Therefore, combining \eqref{eq:xnynbounded}, \eqref{eq:ynconvergent}
  and \eqref{eq:xnynestimate} with
  $M>\max\{\frac{2M_1}{y^*-\delta},\frac{2M_2x^*}{y^*(y^*-\delta)}\}$
  we get for all $n\in\{n_2,n_2+1,\dots\}$ that
  \begin{equation*}
    \P\left(\frac{1}{a_n}\abs[\bigg]{\frac{X_n}{Y_n}-\frac{x_n}{y_n}}>M\right)
    <\frac{\epsilon}{3}+\frac{\epsilon}{3}+\frac{\epsilon}{3}=\epsilon,
  \end{equation*}
  which completes the proof of Lemma~\ref{thm:convergence_rate_fractions}.
\end{proof}

\pagebreak
\section{Monetary policy data set}\label{sec:dataset}

The data set used in Section~\ref{sec:expreal} and described in
Table~\ref{table:variables} has been gathered from three
different sources, as follows
\begin{itemize}
\item quarterly GDP data for Switzerland from \citet{fedreserve1}
\item quarterly GDP data for Euro states from \citet{fedreserve2}
\item monthly business confidence index (BCI) for Switzerland from \citet{OECD1}
\item monthly consumer price index (CPI) for Switzerland from \citet{OECD2}
\item monthly balance sheet data SNB from \citet{snb}
\item monthly call money rate SNB from \citet{snb}
\item monthly average exchange rates CHF from \citet{snb}.
\end{itemize}
From each of these we took data from January 1999 to January 2017 and
performed the transformation described in Table~\ref{table:variables}.

\fi

\end{document}